\newcommand{\arxiv}[2][]{\ifthenelse{\equal{#1}{}}
{\href{http://arxiv.org/abs/#2}{\tt arXiv:#2}}
{\href{http://arxiv.org/abs/math/#2}{\tt arXiv:math.#1/#2}}}
\theoremstyle{plain}
\newtheorem{maintheorem}{Theorem}
\newtheorem{theorem}{Theorem}[section]
\newtheorem{lemma}[theorem]{Lemma}
\theoremstyle{definition}
\newtheorem{example}[theorem]{Example}
\newtheoremstyle{remark}
{}{}{}{}{\itshape}{}{ }{\thmname{#1}\thmnumber{ \itshape #2.}}
\theoremstyle{remark}
\newtheorem{remark}[theorem]{Remark}
\newtheoremstyle{concise}
{}{}{}{}{\bfseries}{}{ }{\thmnumber{#2.}\thmnote{ #3.}}
\theoremstyle{concise}
\newenvironment{embedded roster}[1][0]
{

\begin{enumerate}\setcounter{enumii}{#1}}
{\end{enumerate}}
\renewcommand\p@enumii{\p@enumi}
\renewcommand{\@listii}{\leftmargin=40pt}
\def\N{\mathbb{N}} 
\def\R{\mathbb{R}} 
\def\Z{\mathbb{Z}} 
\def\F{\mathcal{F}} 
\def\h{\mathcal{H}}
\def\x{\times}
\def\but{\setminus}
\def\C{\mathcal{C}}
\def\eps{\varepsilon} 
\def\phi{\varphi} 
\def\invlim{\lim\nolimits} 
\def\xr#1{\xrightarrow{#1}} 
 \renewcommand{\:}{\colon}
\DeclareMathOperator*{\colim}{colim}
 \DeclareMathOperator{\id}{id}
 \DeclareMathOperator{\Hom}{Hom} 
\DeclareMathOperator{\hocolim}{hocolim} \DeclareMathOperator{\holim}{holim}
\DeclareMathOperator{\ost}{ost} 
\DeclareMathOperator{\cyl}{cyl} \DeclareMathOperator{\cocyl}{cocyl}
\DeclareMathOperator{\Lim}{Lim} 
\DeclareMathAlphabet{\eu}{OT1}{cmss}{m}{sl}
\def\Hocolim{\eu h\eu o\eu c\eu o\eu l\eu i\eu m\,} 
\def\Holim{\eu h\eu o\eu l\eu i\eu m\,}
\def\tph#1{\raise2.5pt\hbox{\the\textfont1\char"7F}\!\!#1}
\def\tpm#1{\raise0pt\hbox{\the\textfont1\char"7F}\!#1}
\def\tpl#1{\lower1.5pt\hbox{\the\textfont1\char"7F}\!#1}
\def\Cel{\lfloor} \def\Cer{\rfloor}
\def\downscale#1{\mathchoice{\raisebox{1pt}{$\scriptstyle#1$}}
{\raisebox{1pt}{$\scriptstyle#1$}}{\raisebox{.5pt}{$\scriptscriptstyle#1$}}
{{\scriptscriptstyle#1}}}
\def\cel{\downscale\Cel} 
\def\cer{\downscale\Cer}
\def\bydef{\mathrel{\mathop:}=}
\def\nee#1{\raisebox{-1.5pt}{$\not\overset{\scriptstyle#1}=$}}
\begin{document}

\title[Lim colim versus colim lim. II: Derived limits over a pospace]
{Lim colim versus colim lim. \\ II: Derived limits over a pospace}
\author{Sergey A. Melikhov}
\address{Steklov Mathematical Institute of Russian Academy of Sciences,
ul.\ Gubkina 8, Moscow, 119991 Russia}
\email{melikhov@mi-ras.ru}

\begin{abstract}
\v Cech cohomology $H^n(X)$ of a separable metrizable space $X$ is defined in terms of cohomology 
of its nerves (or ANR neighborhoods) $P_\beta$, whereas Steenrod--Sitnikov homology $H_n(X)$ 
is defined in terms of homology of compact subsets $K_\alpha\subset X$.

We show that one can also go vice versa: in a sense, $H^n(X)$ can be reconstructed from $H^*(K_\alpha)$, and 
if $X$ is finite dimensional, $H_n(X)$ can be reconstructed from $H_*(P_\beta)$.

The reconstruction is via a Bousfield--Kan/Araki--Yoshimura type spectral sequence of the form
$\lim^p H^q(K_\alpha)\Rightarrow H^{p+q}(X)$, respectively $\lim^p H_q(P_\beta)\Rightarrow H_{q-p}(X)$, 
except that the derived limits have to be ``corrected'' so as to take into account a natural topology 
on the indexing set.
The corrected derived limits coincide with the usual ones when the topology is discrete, and in general
are applied not to an inverse system but to a ``partially ordered sheaf''.

The ``correction'' of the derived limit functors in turn involves constructing a ``correct'' (metrizable) 
topology on the order complex $|P|$ of a partially ordered metrizable space $P$ (such as the hyperspace
$K(X)$ of nonempty compact subsets of $X$ with the Hausdorff metric). 
It turns out that three natural approaches (by using the space of measurable functions, the space of probability 
measures, or the usual embedding $K(X)\to C(X;\R)$) all lead to the same topology on $|P|$.
\end{abstract}

\maketitle
\section{Introduction}

Let $X$ be a separable metrizable space, let $K_\alpha$ run over its compact subsets, and let $P_\beta$ run 
over the nerves of open covers of $X$ (or over open neighborhoods of $X$ in some ANR).
The Steenrod--Sitnikov homology and the \v Cech cohomology of $X$ are defined by
\[H_n(X)\bydef \colim_\alpha H_n(K_\alpha),\]
\[H^n(X)\bydef \colim_\beta H^n(P_\beta),\]
where $H_n(K_\alpha)$ is Steenrod homology (concerning the latter see e.g.\ \cite{M1} or \cite{M00}).

It is very natural to try to do everything vice versa, and define some kind of homology of $X$ in terms of
$H_n(K_\alpha)$ and some kind of cohomology of $X$ in terms of $H^n(P_\beta)$.
Of course, instead of the direct limits ($\colim$) it would be natural to use inverse limits ($\lim$), as well as 
their standard ``correction terms'' --- higher derived limits $\lim^p$, $p>0$.

\subsection{Strong (co)homology} 
This natural path leads rather unambiguously to ``strong homology'' and ``strong cohomology'', which were
introduced by Lisica and Marde\v si\'c and independently by Miminoshvili in the 1980s.
Despite doubtless naturality of their construction, there is a big trouble with these ``strong'' groups: they cannot 
be computed in ZFC for the very simplest examples.

\begin{example} \label{ZFC}
Let $\N$ denote the countable discrete space and $\N^+$ its one-point compactification.
The space $\N\x\N^+$ is arguably the very simplest non-compact, non-triangulable space.
(Algebraic topology of simplicial complexes and algebraic topology of compacta are completely different 
stories, which are much, much better understood.)
But the $(-1)$-dimensional strong homology of $\N\x\N^+$ cannot be computed in ZFC \cite{MP}.
Of course, another issue is that one should not really be computing $(-1)$-dimensional homology at all;
but if $\N^+$ is replaced with the $n$-dimensional Hawaiian earring $(\R^n\x\N)^+$, we have exactly 
the same problem with the $(n-1)$-dimensional strong homology \cite{MP}.
Also, the $(n+1)$-dimensional strong cohomology of the metric quotient $\N\x(\R^n\x\N)^+/(\N\x\infty)$,
which can alternatively be described as the non-compact cluster (=metric wedge) $\bigvee_{i\in\N} (\R^n\x\N)$,
cannot be computed in ZFC for the same reason (see \cite{M-III}*{Example \ref{lim:MP'}}).
\end{example}

This strange issue makes strong homology and cohomology much more interesting for the purposes of Foundations 
of Mathematics, but at the same time obviously ``defective'' for the purposes of geometric topology: at least 
with the current level of depth of human thought, it appears that statements carrying genuine geometric meaning 
have too low logical complexity to have a chance of being independent of ZFC.
Nevertheless, over the years there has appeared a considerable amount of literature about strong homology and 
strong cohomology (see \cite{MP}, \cite{Mard}, \cite{Li2} and references there).
One attractive feature of strong homology is its built-in invariance under strong shape.
It is an open problem whether Steenrod--Sitnikov homology is an invariant of strong shape.
However, as fine shape is now available \cite{M-I}, relevance of this problem is no longer obvious.

\subsection{Topology of the indexing set}
The goal of the present paper is to ``do everything vice versa'' slightly more carefully, so as to avoid running 
into set-theoretic troubles.
The trouble noted in Example \ref{ZFC} is really a very basic trouble with the $\lim^1$ functor (see 
\cite{M-III}*{Example \ref{lim:MP}}); but it exists only for inverse systems indexed by an uncountable set, 
or more precisely by a set with no countable cofinal subset.
It is therefore natural to look for a ``forgotten topology'' of the indexing set, and to simply amend the definition
of derived limits so as to take this topology into account.
This is precisely what we will do in the present paper.

The indexing set we are mostly interested in is the poset $K(X)$ of all nonempty compact subsets of 
a separable metrizable space $X$.
(The empty subset is excluded for technical reasons; its exclusion will be harmless.)
Given some metric on $X$, the usual Hausdorff metric (see \S\ref{Hausdorff metric}) on $K(X)$ makes it into 
a separable metric space, whose underlying topology (also known as the Vietoris topology) depends only on
that of $X$.
In fact, $K(X)$ turns out to be a {\it pospace} (=partially ordered space): its topology and order agree in 
the sense that the order relation is a closed set (Lemma \ref{Hausdorff pospace}).

As for open covers of $X$ or their nerves, unfortunately they do not seem to carry any natural separable 
metrizable topology.%
\footnote{Also, a technical disadvantage is that they do not form a poset; while irreducible open covers do, their
topology might be less natural.
One can, of course, work with preorders or filtered categories instead of posets, so this is not a serious problem.}
While a more elegant solution may exist, one obvious thing to do is to simply embed $X$ in some compact ANR,
for example the Hilbert cube $Q$, and look at its open neighborhoods there.
Their complements are precisely all compact subsets of $Q\but X$, and so the previous construction applies 
here as well.

\subsection{Topological order complex}
The usual derived limits are revisited in \S\ref{discrete} below, where their usual definition is reformulated in
a more geometric form.
Namely, the derived limits of an inverse system $D$ indexed by a poset $P$ can be understood as the cohomology 
groups of a certain sheaf, denoted $\Holim D$, over the order complex $|P|$.

Therefore our next goal is to define an appropriate topology on the order complex of a poset $P$ that is also 
endowed with a topology.

An obvious approach is to consider the geometric realization of the corresponding simplicial space, also known 
as the classifying space of the topological category (see \S\ref{wrong}).
However, the resulting topology can well be non-metrizable even when the original topology of $P$ is discrete, 
so it is ``wrong'' for our purposes.

The next obvious approach is to inject $|P|$ into some metrizable (perhaps, vector) space naturally ``spanned''
by $P$ and take the induced topology.
But it turns out that there is more than one good way to do so.

The main result of \S\ref{pospaces} is as follows (see Theorem \ref{HM=AE}).

\begin{maintheorem} \label{thA}
If $P$ is a pospace, the obvious injective maps of the order complex $|P|$ into the space of measurable 
functions $[0,1]\to P$ and into the space of probability measures on $P$ induce the same topology on the set $|P|$.
\end{maintheorem}

In \S\ref{simplicial hyperspace} we additionally prove the following (see Lemma \ref{simplicial hyperspace-def}(c) 
and Theorems \ref{simplicial-hyperspace}, \ref{topology3}).

\begin{maintheorem} The embedding $e$ of $K(X)$ into the space $C(X,\R)$ of real-valued functions on $X$,
given by $e(A)(x)=d(x,A)$, extends to an injective map of $|K(X)|$ in $C(X,\R)$ such that
the induced topology on $|K(X)|$ does not depend on the choice of the metric $d$ on $X$.
Moreover, it coincides with the two topologies of Theorem \ref{thA}.
\end{maintheorem}

\subsection{Derived limits over a pospace}

Let $P$ be a pospace and $\F$ be a {\it posheaf} (=partially ordered sheaf) of abelian groups over $P$
(see definition in \S\ref{posheaves}).
Then there is a certain sheaf $\Holim\F$ of abelian groups over the metrizable topological order complex $|P|$
(see definition in \S\ref{holim-definition}), and we define $\Lim^p\F$ to be $H^p(|P|;\,\Holim\F)$.
When the topology of $P$ is discrete, $\F$ boils down to a $P$-indexed inverse system of abelian groups, and 
$\Lim^p\F$ coincides with its usual $p$th derived limit.

\begin{maintheorem} \label{thC}
Let $X$ be a separable metrizable space. 
For each $q\ge 0$ there exists a posheaf $\F^q$ over $K(X)$ such that
the stalk $\F^q_{K_\alpha}\simeq H^q(K_\alpha)$ for each $K_\alpha\in K(X)$, and there exists a spectral sequence
of the form \[\Lim^p\F^q=E_2^{pq}\ \Rightarrow\ H^{p+q}(X).\]
\end{maintheorem}

\begin{maintheorem} \label{thD}
Let $X$ be a subset of $S^n$ and let $U(X)$ be the pospace of its open neighborhoods $\ne S^n$, ordered 
by inclusion and topologized by the Hausdorff metric.
For each $q\ge 0$ there exists a posheaf $\F_q$ over $U(X)$ such that the stalk 
$(\F_q)_{P_\beta}\simeq H_q(P_\beta)$ for each $P_\beta\in U(X)$, and there exists a 
spectral sequence of the form \[\Lim^p\F_q=E^2_{-p,q}\ \Rightarrow\ H_{q-p}(X).\]
\end{maintheorem}

Key ingredients of the proof of Theorem \ref{thC} (see Theorem \ref{lerayss}) are the Leray 
spectral sequence of a continuous map, a homotopy equivalence lemma (Theorem \ref{atoms}) and a lemma 
on compatibility of the $\Holim$ operator with Leray sheaves (Theorem \ref{Leray}(b)).
Theorem \ref{thD} (see Theorem \ref{lerayss2}) additionally employs the Sitnikov duality.

All theorems of the present paper are proved in ZFC.
Thus it can be argued, in view of Example \ref{ZFC}, that these results succeed to ``expel set theory from 
algebraic topology''.

\section{Background: Usual derived limits revisited} \label{discrete}

Let us now discuss in some detail the usual derived limits $\lim^p D$ for a diagram $D$ of abelian groups
indexed by a poset $P$.
If $P$ is viewed as a category%
\footnote{With elements of $P$ as objects, precisely one morphism $p\to q$ when $p\le q$
and no morphisms $p\to q$ when $p\not\le q$}, 
then such a diagram $D$ is simply a functor from $P$ to the category of abelian groups.
It will be notationally more convenient to regard $D$ as a set of data: groups $G_p$ for all $p\in P$ and
homomorphisms $\phi^p_q\:G_p\to G_q$ for all $p,q\in P$ with $p\le q$.

Let us define a new poset $G=\bigsqcup_{p\in P} G_p$ with $(p,g)\le (q,h)$ if and only if $p\le q$ and 
$\phi^p_q(g)=h$.
The map $\pi\:G\to P$ defined by $\pi(p,g)=p$ is monotone and so induces a simplicial map $\F\:|G|\to|P|$
between the order complexes%
\footnote{The order complex of a poset $P$ is the simplicial complex with elements of $P$ as vertices,
and with finite chains $p_1\le\dots\le p_n$ as simplexes.
We endow simplicial complexes with the metric topology.}.
The simplicial complex $|G|$ is known as the {\it homotopy colimit} $\hocolim D$, and we will denote 
the map $\F$ by $\Hocolim D$.

In fact, $\Hocolim D$ is a cosheaf (in the geometric sense, i.e.\ a ``complete spread'' of R. Fox,
or equivalently a ``display space'' of J. Funk, see \cite{Woo}*{Appendix B}), and we have
$H_0(|P|;\,\Hocolim D)\simeq\colim D$ and $H_i(|P|;\,\Hocolim D)=0$ for $i>0$.

Here are some details.
As a cosheaf of sets, $\F=\Hocolim D$ can be recovered as the cosheafafication of its precosheaf%
\footnote{A {\it precosheaf} of abelian groups on a topological space $X$ is a covariant functor from 
the poset of all open subsets of $X$ ordered by inclusion to the category of abelian groups.} 
of cosections $F$, defined by $F(U)=\pi_0\big(\F^{-1}(U)\big)$.
Namely, the costalks $\F_x=\F^{-1}(x)$ of $\F$ are the inverse limits $\lim_U F(U)$ over all open $U$
containing $x$, and $\F$ itself is the projection $\bigsqcup_{x\in |P|}\F_x\to |P|$, where the disjoint union
of sets is endowed with the topology with basis consisting of $V_{U,s}=\{(x,t)\mid p^x_U(t)=s\}$, where 
$p^x_U\:\F_x\to F(U)$ is the natural map from the inverse limit, $U$ runs over open subsets of $|P|$ 
and $s$ runs over $F(U)$.

Let us note that each $\F^{-1}(p\le q)$ is nothing but the mapping cylinder $\cyl(\phi^p_q)$.
In general, $\F^{-1}(p_1\le\dots\le p_n)$ is the iterated mapping cylinder 
$\cyl(\phi^{p_1}_{p_2},\dots,\phi^{p_{n-1}}_{p_n})$, that is the mapping cylinder of the composition 
$\cyl(\phi^{p_1}_{p_2},\dots,\phi^{p_{n-1}}_{p_{n-2}})\xr{\pi}
G_{p_{n-1}}\xr{\phi^{p_{n-1}}_{p_n}} G_{p_n}$, where $\pi$ is the natural projection.

The precosheaf $F$ is of a combinatorial type, known as a ``costack'' or 
a ``local coefficient system'': it is determined by its values on the open stars%
\footnote{The open star of a simplex $\sigma$ is the union of the interiors of all simplexes containing $\sigma$.}
of simplexes $\ost(\sigma)$, $\sigma\in |P|$.
So the homology of $|P|$ with coefficients in $\F$ is the homology of the chain complex $C_*(|P|;\,F)$
consisting of the groups $C_n=\bigoplus_{\sigma^n\in |P|}F_\sigma$, where $F_\sigma=F\big(\ost(\sigma)\big)$,
with the differential $\partial\:C_n\to C_{n-1}$ defined on each $g_\sigma\in F_\sigma$ by 
$\partial(g_\sigma)=\sum_{\tau^{n-1}\subset\sigma^n}[\tau:\sigma]F_{\sigma\tau}(g_\sigma)$, where 
$[\tau:\sigma]=\pm1$ is the incidence index and $F_{\sigma\tau}\:F_\sigma\to F_\tau$
is the image of $F$ on the inclusion $\ost(\sigma)\subset\ost(\tau)$.
In our case, clearly, $F_\sigma=G_{p_1}$ for $\sigma=(p_1\le\dots\le p_n)$ and
$F_{\sigma\tau}=\phi^{p_1}_{p_2}\:G_{p_1}\to G_{p_2}$ if $\tau=(p_2\le\dots\le p_n)$, whereas
in the $n-1$ other cases $F_{\sigma\tau}=\id\:G_{p_1}\to G_{p_1}$.
From this explicit description it is not hard to compute $H_i(|P|;\,\F)$.

In order to deal with $\lim$ and $\lim^p$ we need to dualize the above.
A well-known approach, going back to Bousfield and Kan \cite{BK}, is by considering the usual homotopy limit,
which is built out from the usual mapping cocylinders%
\footnote{The usual mapping cocylinder of $f\:X\to Y$ is the pullback of $f$ and the evaluation map $Y^I\to Y$, 
$p\mapsto p(1)$. 
Let us note that the usual mapping cylinder of $f$ is the pushout of $f$ and the inclusion $X\to X\x I$, 
$x\mapsto (x,1)$.}.
But it turns out that there is also a different approach, which is arguably more intuitive. 

Let $\Gamma$ be the coequalizer of
$\bigsqcup_{p\le q}G_p\x\ost(p\le q)\underset i{\overset j\rightrightarrows}\bigsqcup_p G_p\x\ost(p)$, 
that is, the quotient of the latter disjoint union by identifying the images of the former disjoint union 
under the two inclusions $i$ and $j$, which arise from the inclusions $i_{pq}\:\ost(p\le q)\subset\ost(p)$ and 
$j_{pq}\:\ost(p\le q)\subset\ost(q)$.
Since $i$ and $j$ commute with the projection to $|P|$, we get a map $\tilde\F\:\Gamma\to|P|$, which we will
denote by $\Holim D$ (breaking with the tradition of understanding $\holim D$ as constructed from path spaces).

It is easy to see that $\Holim D$ is a sheaf (in the geometric sense, i.e.\ the ``\'etal\'e space'')
--- normally a non-Hausdorff one, as is common for sheaves%
\footnote{Let us note that the coequalizer of the maps $(0,\infty)\x\Z\rightrightarrows\R\x\Z$ given by 
$(x,n)\mapsto(x,n)$ and $(x,n)\mapsto(x,n+1)$ is non-Hausdorff, but has a countable base of topology. 
In contrast, the coequalizer of the maps $[0,\infty)\x\Z\rightrightarrows\R\x\Z$ given by the same formulas
is Hausdorff, but does not have a countable base of neighborhoods at $0$.},
and it turns out, as we will see shortly, that $H^n(|P|;\,\Holim D)=\lim^n D$.

Let us note that each $\tilde\F^{-1}(p\le q)$ is homeomorphic to $\cocyl(\phi^p_q)$, where 
$\cocyl(X\xr{f}Y)$ denotes, unconventionally, the pushout of $f\x\id_{(0,1]}\:X\x(0,1]\to Y\x(0,1]$
and the inclusion $X\x(0,1]\subset X\x I$.
In general, $\tilde\F^{-1}(p_1\le\dots\le p_n)$ is the iterated mapping cocylinder 
$\cocyl(\phi^{p_1}_{p_2},\dots,\phi^{p_{n-1}}_{p_n})$, that is the mapping cocylinder of the composition 
$\cocyl(\phi^{p_1}_{p_2},\dots,\phi^{p_{n-2}}_{p_{n-1}})\xr{\pi}G_{p_{n-1}}\xr{\phi^{p_{n-1}}_{p_n}} G_{p_n}$, 
where $\pi$ is the natural projection.

There is another definition of the sheaf $\Holim D$, which may look fancier but is more categorical and
will be easier to generalize.
Let $P'$ be the poset of nonempty faces of the simplicial complex $|P|$; thus $P'$ is the set of all nonempty
finite chains of $P$, ordered by inclusion.
Sending every chain to its greatest element yields a monotone map $\rho\:P'\to P$.
Let $\rho^*F\:\rho^*G\to P'$ be the pullback of the monotone map $\pi\:G\to P$ along $\rho$.%
\footnote{Thus $\rho^*G$ consists of tuples $(p_1<\dots<p_n;\,g)$, where $g\in G_{p_n}$, and is ordered by 
$(p_1<\dots<p_n;\,g)\le (q_1<\dots<q_m;\,h)$ if and only if $(p_1<\dots<p_n)$ is a subchain of $(q_1<\dots<q_m)$ 
and $\phi^{p_n}_{q_m}(g)=h$.}
Since $\rho^*F$ is monotone, it may be viewed as a continuous map with respect to the Alexandrov topologies%
\footnote{The Alexandrov topology on the set of points of a poset $P$ consists of all subsets $U\subset P$
such that if $p\in U$ and $q\ge p$, then also $q\in U$.}, 
and as such it is easily seen to be a sheaf.%
\footnote{Indeed, the smallest open neighborhood of an element $(p_1<\dots<p_n;\,g)\in\rho^*G$ consists 
of all tuples $\big(q_1<\dots<q_m;\,h\big)\in\rho^*G$ such that $(p_1<\dots<p_n)$ is a subchain of $(q_1<\dots<q_m)$ 
and $h=f^{p_n}_{q_m}(g)$.
Since $h$ is uniquely determined by $(q_1<\dots<q_m)$, this neighborhood projects homeomorphically onto the smallest 
open neighborhood of $(p_1<\dots<p_n)$ in $P'$, which consists of all chains $(q_1<\dots<q_m)$ that have 
$(p_1<\dots<p_n)$ as a subchain.}
On the other hand, there is a continuous map $g\:|P|\to P'$ defined by sending the interior of every simplex
to that same simplex regarded as a point of $P'$. 
The pullback of $\rho^*F$ via $g$ is nothing but the sheaf $\tilde\F=\Holim D$.

The presheaf of sections $\tilde F$ of the sheaf $\tilde\F$ is again determined by its values on 
the open stars of simplexes, and so is a ``stack'' or a ``local coefficient system''.
So the cohomology of $|P|$ with coefficients in $\tilde\F$ is the cohomology of the cochain complex 
$C^*(|P|;\,\tilde F)$ consisting of the groups $C^n=\prod_{\sigma^n\in |P|}\tilde F_\sigma$, where 
$\tilde F_\sigma=\tilde F\big((\ost(\sigma)\big)$, with differential $\delta\:C^n\to C^{n+1}$ defined by
$(\delta c^n)(\tau^{n+1})=\sum_{\sigma^n\subset\tau^{n+1}}
[\sigma:\tau]\tilde F_{\tau\sigma}\big(c^n(\sigma)\big)$, 
where $\tilde F_{\tau\sigma}\:\tilde F_\sigma\to\tilde F_\tau$
is the image of $\tilde F$ on the inclusion $\ost(\tau)\subset\ost(\sigma)$.
In our case, clearly, $\tilde F_\sigma=G_{p_n}$ for $\sigma=(p_1\le\dots\le p_n)$ and
$\tilde F_{\sigma\tau}=\phi^{p_{n-1}}_{p_n}\:G_{p_{n-1}}\to G_{p_n}$ if $\tau=(p_1\le\dots\le p_{n-1})$, whereas
in the $n-1$ other cases $\tilde F_{\sigma\tau}=\id\:G_{p_n}\to G_{p_n}$.
From this explicit description it immediately follows that $H^n(|P|;\,\Holim D)=\lim^n D$.

Now let $\Delta$ be a diagram of simplicial complexes $X_p$ and simplicial maps $f^p_q\:X_p\to X_q$ indexed by 
the poset $P$.
We may again consider the map $\Hocolim\Delta\:\hocolim\Delta\to|P|$, which satisfies
$(\Hocolim\Delta)^{-1}(p_1\le\dots\le p_n)=\cyl(f^{p_1}_{p_2},\dots,f^{p_{n-1}}_{p_n})$.
On the other hand, for each $n$ we have a diagram $D_n$ of abelian groups $H_n(X_p)$ and homomorphisms
$(f^p_q)_*\:H_n(X_p)\to H_n(X_q)$ indexed by $P$, and also a diagram $D^n$ of abelian groups $H^n(X_p)$ and 
homomorphisms $(f^p_q)^*\:H^n(X_q)\to H^n(X_p)$ indexed by the dual poset $P^*$ (i.e.\ the same set with 
reversed order).
It is not hard to see that the Leray sheaf $\h^n(\Hocolim\Delta)\simeq\Holim D^n$, and the Leray cosheaf
$\h_n(\Hocolim\Delta)\simeq\Hocolim D_n$.
Consequently, we have a spectral sequence
\[E_2^{pq}=\lim\nolimits^p D^q\simeq H^p(|P|;\,\Holim D^q)\Rightarrow H^{p+q}(\hocolim\Delta),\]
which is known as the Bousfield--Kan/Araki--Yoshimura spectral sequence \cite{BK}, \cite{AY}, and also a spectral sequence 
$H_p(|P|;\,\Hocolim D_q)\Rightarrow H_{p+q}(\hocolim\Delta)$, which collapses to an isomorphism 
$\lim D_n\simeq H_n(\hocolim\Delta)$.

It should be noted that if $(X_p,f^p_q)$ is a directed system of pointed subcomplexes of a simplicial complex $X$
and their inclusions (directed means that every two are both contained in a third one), then $\hocolim\Delta$ is 
homotopy equivalent to $X$ \cite{AY}, \cite{BK} (see also \cite{Se}, \cite{Vo}, \cite{DI}).
In this case the Bousfield--Kan/Araki--Yoshimura spectral sequence has $E_3^{pq}=0$ for $p\ge 2$ and in fact 
reduces to a Mdzinarishvili-type long exact sequence \cite{Oh}.
If moreover the subcomplexes $X_p$ are finite, then already $E_2^{pq}=\lim\nolimits^p D^q=0$ for $p\ge 2$ 
(see references in \cite{M-III}*{Remark \ref{lim:lim2fg}}) and consequently the spectral sequence reduces to
a Milnor-type short exact sequence.

\section{Partially ordered spaces}\label{pospaces}

By a {\it topological poset} we mean a topological space $P$ that is also a poset.
This is a topological category (in the sense of Segal \cite{Se}) with precisely one morphism $p\to q$
whenever $p\le q$ and no morphisms $p\to q$ otherwise, and with the set of morphisms topologized
as a subspace of $P\x P$.
(That it is a topological category means that the four structure maps are automatically continuous: 
the ``identity'' map from objects to morphisms, the ``source'' and ``target'' maps from morphisms to 
objects, and the ``composition'' map from morphisms squared to morphisms.)
The {\it dual} topological poset $P^*$ is the same space with the reversed order.

A {\it pospace} (=partially ordered space) is a topological poset such that the order relation $\le$,
viewed as a subset of $P\x P$, is closed in $P\x P$ with respect to the product topology.
When $P$ is metrizable, this is equivalent to saying that if $x_n\le y_n$ for each $n\in\N$ and
$x_n\to x$ and $y_n\to y$ as $n\to\infty$, then $x\le y$.

If $P$ is a topological poset, let $|P|$ denote the set of all formal sums $x=\sum\lambda_{i=1}^n x_i$, where 
$n\in\N$, $x_1\le\dots\le x_n$ is a nonempty finite chain in $P$, each $\lambda_i\ge 0$ and $\sum_{i=1}^n\lambda_i=1$.
We will now consider some natural topologies on $|P|$.

\subsection{Wrong construction (weak topology)} \label{wrong}

One natural topology on $|P|$ is well-known.
Namely, let $|P|_w$ be the classifying space of $P$ (or, in another terminology, the nerve of $P$), where $P$ 
is viewed as a topological category \cite{Se}.
Unfortunately, the topology of $|P|_w$ is ``wrong'' (for our purposes) in that it generally fails 
to be metrizable when $P$ is metrizable.
In particular, $|P|_w$ is non-metrizable whenever $P$ is discrete (as a space) and the order complex of $P$ 
(as a poset) is not locally finite.
But actually a satisfactory ``correction'' of the weak topology is already known in this particular case 
(see Remark \ref{quot-unif} below).

Let us briefly review the construction of $|P|_w$.
By definition, $|P|_w$ is the geometric realization of the simplicial space (=simplicial object in the category of
topological spaces, cf.\ \cite{Fri}) where every $(n-1)$-simplex is a chain $p_1\le\dots\le p_n$ of $n$ elements of $P$.
Such a chain may be regarded as a monotone map $C\:[n]\to P$ (possibly non-injective), where $[n]$ denotes 
the $n$-element poset $\{1,\dots,n\}$ with the usual (total) order.
All such chains of length $n$ (possibly with repeats) form a subspace $P^{[n]}$ of the $n$-fold 
Cartesian product $P\x\dots\x P$ of spaces.

For example, the partially ordered space $[0,1]$ (the closed unit interval of the real line with its usual total
order) yields the ``standard skew $n$-simplex'' 
$[0,1]^{[n]}=\{(x_1,\dots,x_n)\mid 0\le x_1\le\dots\le x_n\le 1\}\subset\R^n$.
Let us note that we already have to consider chains with repeats here in order to get the closed simplex.

Every monotone map $f\:[m]\to[n]$ induces a continuous map $f^*\:P^{[n]}\to P^{[m]}$ by $C\mapsto Cf$.
In particular, we have $f^*\:[0,1]^{[n]}\to [0,1]^{[m]}$.
On the other hand, the ``standard symmetric $(n-1)$-simplex''
$\Delta^{n-1}=\{(x_0,\dots,x_n)\in\R^n\mid x_0,\dots,x_n\ge 0,\,x_0+\dots+x_n=1\}$
behaves covariantly: $f$ induces a map $f_*\:\R^m\to\R^n$ taking $\Delta^{m-1}$ into $\Delta^{n-1}$.

With this notation, $|P|_w$ is the quotient space of $\bigsqcup_{n\in\N} P^{[n]}\x\Delta^{n-1}$ 
by the equivalence relation $(C,f_*(t))\sim(f^*(C),t)$ for every monotone map $f\:[m]\to[n]$.

Let us note that if we consider only monotone injections so as to avoid degenerate simplexes, this will give 
a completely unintended topology on $|P|$, because chains with repeats arise naturally as limits of chains 
without repeats.

\begin{remark} \label{quot-unif}
One can attempt to ``correct'' the weak topology.
Let us fix a metric on $P$ that is bounded above by $1$ and metrize $\Delta^{n-1}$ by means of the $l_\infty$ metric 
on $[0,1]^{[n-1]}$.
Let us use the $l_\infty$ product metric on each summand of $\bigsqcup_{n\in\N} P^{[n]}\x\Delta^{n-1}$ (note that
it will be bounded above by $1$) and set the distance between any pair of points in distinct summands equal to $1$.
This determines a metric, and in particular a metrizable uniform structure, on the disjoint union (beware that it is 
not the uniformity of the disjoint union, which is generally non-metrizable).
Let $|P|_{uw}$ be the quotient endowed with the topology of the quotient uniformity (not to be confused
with the quotient topology).

If $P$ is discrete (as a space), and is metrized by setting every distance equal to $1$, then $|P|_{uw}$ 
is metrizable \cite{M3}.
In the general case, we have three open questions: 
\begin{enumerate}
\item Is $|P|_{uw}$ well defined (i.e.\ independent of the choice of metric on $P$)? 
\item Is $|P|_{uw}$ metrizable? 
\item Is the topology of $|P|_{uw}$ same as the topology of $|P|_{HM}$ and $|P|_{AE}$ (see below)?
\end{enumerate}
\end{remark}

\subsection{Hartman--Mycielski construction (measurable functions)} 

We will refer to \cite{M2}*{\S\ref{metr:measurable functions}} for a detailed discussion of spaces of
measurable functions, and we will use some notation introduced there.

Let $P$ be a topological poset.
Given an $x=\sum_{i=1}^n\lambda_i x_i\in |P|$, where $x_1\le\dots\le x_n$, each $\lambda_i\ge 0$ and 
$\sum_{i=1}^n\lambda_i=1$, let us define a step function $\phi_x\:[0,1)\to P$ by $\phi_x(t)=x_k$ if $t\in I_k$, 
where $I_k=[\sum_{i<k}\lambda_i,\sum_{i\le k}\lambda_i)$.
Let $|P|_{HM}\subset HM(P)$ be the set of all such functions, endowed with the topology of convergence in
measure.
If $P$ is endowed with a metric $d$, we further endow $|P|_{HM}$ with the metric $L_1$ (which induces
the same topology of uniform convergence).

\begin{example} Let $P=\{0,\dots,n\}$ with the usual order and with the usual metric (i.e., a subpospace of $\R$).
Then $|P|_{HM}$ is isometric to the standard skew $n$-simplex 
$\Delta=\{(t_1,\dots,t_n)\in\R^n\mid 0\le t_1\le\dots\le t_n\le 1\}$ with the $l_1$ metric.

Indeed, given step functions $f,g\:[0,1)\to P$ with $f^{-1}(i)=[t_i,t_{i+1})$ and $g^{-1}(i)=[s_i,s_{i+1})$, 
where $0=t_0\le\dots\le t_{n+1}=1$ and $0=s_0\le\dots\le s_{n+1}=1$, they correspond to 
$T=(t_1,\dots,t_n)\in\Delta$ and $S=(s_1,\dots,s_n)\in\Delta$, with $l_1(T,S)=\sum_{i=1}^n|t_i-s_i|$.

On the other hand, we have $f=f_1+\dots+f_n$ as elements of the vector space of measurable functions $I\to\R$, 
where each $f_i\:I\to P$ is a two-valued step function, $f_i(t)=0$ if $t<t_i$ and $f_i(t)=1$ if $t\ge t_i$.
Similarly, $g=g_1+\dots+g_n$, where $g_i(t)=0$ if $t<s_i$ and $g_i(t)=1$ if $t\ge s_i$.
If $f_i(t)>g_i(t)$, then $t_i\le t< s_i$; and if additionally $f_j(t)<g_j(t)$, then $s_j\le t< t_j$.
Hence $s_j<s_i$ and $t_i<t_j$, which is a contradiction.
Therefore either $f_i(t)\ge g_i(t)$ for all $i$, or $g_i(t)\ge f_i(t)$ for all $i$.
Thus $L_1(f,g)=\int_I|f(t)-g(t)|\,dt=\int_I\sum_{i=1}^n |f_i(t)-g_i(t)|\,dt=\sum_{i=1}^n L_1(f_i,g_i)=
\sum_{i=1}^n|t_i-s_i|=l_1(T,S)$.
\end{example}

\subsection{Resolution} \label{resolution}

If $P$ is a poset, let $A(P)$ be its set of atoms (that is, elements $p\in P$ such that $q\not<p$ for each $q\in P$).
If $P$ is a topological poset, $A(P)$ is a topological subposet of $P$, with discrete order (i.e.\ no elements
are comparable).
In particular, $|A(P)|$ is homeomorphic to $A(P)$.

\begin{example} If $X$ is a metrizable space and $K(X)$ is the pospace of its nonempty compact subsets (see Lemma
\ref{Hausdorff pospace}), then $A\big(K(X)\big)=X$.
\end{example}

If $P$ and $Q$ are posets, $P\x Q$ is a poset, where $(p,q)\le (p',q')$ iff $p\le p'$ and $q\le q'$.
If $P$ and $Q$ are topological posets, $P\x Q$ is a topological poset.
In particular, $A(P)\x P$ is a topological poset, where $(a,p)\le (b,q)$ iff $a=b$ and $p\le q$.
It is easy to see that $|A(P)\x P|$ is homeomorphic to $A(P)\x|P|$.

Let $E(P)$ be the topological subposet of $A(P)\x P$ consisting of all pairs $(a,p)$ such that $a\le p$.
Thus $|E(P)|$ is a subspace of $|A(P)\x P|$, which is identified with the subspace of $A(P)\x |P|$
consisting of all pairs $\big(a,\,\sum_{i=1}^n\lambda_i x_i\big)$ such that $a\le x_1$ (and also 
$x_1\le\dots\le x_n$, each $\lambda_i\ge 0$ and $\sum_{i=1}^n\lambda_i=1$).

\begin{theorem} \label{atoms} $|E(P)|$ is homotopy equivalent to $A(P)$.
\end{theorem}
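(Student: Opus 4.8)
The plan is to exhibit an explicit strong deformation retraction of $|E(P)|$ onto the subspace $\{(a,\,1\!\cdot\!a)\mid a\in A(P)\}$, where $1\!\cdot\!a\in|P|$ denotes the vertex corresponding to the one-element chain $\{a\}$ (whose step function $\phi_{1\cdot a}$ is the constant function with value $a$). Let $r\:|E(P)|\to A(P)$ be the restriction of the projection $A(P)\x|P|\to A(P)$, and let $s\:A(P)\to|E(P)|$ be the map $a\mapsto(a,\,1\!\cdot\!a)$. Both are continuous: $r$ is a restriction of a projection, and $s$ is continuous because its two coordinate functions $a\mapsto a$ and $a\mapsto 1\!\cdot\!a$ are, the latter being an isometric embedding $A(P)\to|P|_{HM}$ since $L_1(\phi_{1\cdot a},\phi_{1\cdot b})=d(a,b)$. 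Clearly $r\circ s=\id_{A(P)}$, so it suffices to construct a homotopy from $\id_{|E(P)|}$ to $s\circ r$.

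Work with the Hartman--Mycielski model $|P|=|P|_{HM}$, so a point of $|E(P)|$ is a pair $(a,\phi)$ with $a\in A(P)$, $\phi\:[0,1)\to P$ a step function, and $a\le\phi(s)$ for all $s$. Define $H\:|E(P)|\x I\to|E(P)|$ by $H\big((a,\phi),t\big)=(a,\phi_t)$, where $\phi_t\:[0,1)\to P$ is the step function with $\phi_t(s)=a$ for $s\in[0,t)$ and $\phi_t(s)=\phi\big(\tfrac{s-t}{1-t}\big)$ for $s\in[t,1)$, and $\phi_1\equiv a$. Since $a\le\phi(s)$ for all $s$ and $a$ is an atom, $a\le\phi_t(s)$ for all $s$, so $H$ indeed takes values in $|E(P)|$; moreover $H_0=\id_{|E(P)|}$ and $H_1=s\circ r$. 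Thus the only remaining point is the continuity of $H$.

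This is a routine estimate in the topology of convergence in measure. The affine maps $\rho_t\:[t,1)\to[0,1)$, $\rho_t(s)=\tfrac{s-t}{1-t}$, satisfy $\rho_t\to\rho_{t'}$ uniformly (off an interval of length $|t-t'|$) as $t\to t'$, and each $\rho_t$ pushes Lebesgue measure $\mu$ on $[t,1)$ forward to $(1-t)\mu$ on $[0,1)$, hence is measure-non-increasing; also $\phi$ is continuous off a finite set. So, given $(a,\phi,t)$ and a nearby $(a',\phi',t')$, the set of $s\in[0,1)$ at which $\phi_t$ and $\phi'_{t'}$ differ is contained in the union of an interval of length $O(|t-t'|)$ (mismatch of $[0,t)$ versus $[0,t')$ and of the breakpoints of $\rho_t$ versus $\rho_{t'}$), the set $[0,\min(t,t'))$ in case $a\ne a'$, and a set of measure at most $\mu\{u\:\phi(u)\ne\phi'(u)\}$ (the $\rho$-preimage of the disagreement set of $\phi,\phi'$). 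For each $\eps>0$ this gives a bound on $\mu\{s\:d(\phi_t(s),\phi'_{t'}(s))>\eps\}$ tending to $0$ as $(a',\phi',t')\to(a,\phi,t)$, i.e.\ $H$ is continuous. Hence $r$ is a homotopy equivalence with homotopy inverse $s$, and $|E(P)|\simeq A(P)$.

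The only place needing any care is the behaviour near $t=1$: there the rescaling $\rho_t$ degenerates, but its domain $[t,1)$ shrinks to the empty set, so $\phi_t$ converges in measure to the constant step function $a$ regardless of $\phi$, and $H$ extends continuously as stated. Everything else is formal, and by Theorem~A (together with Theorem~B when $P$ is of the form $K(X)$) the same conclusion holds for the other topologies on $|P|$ considered in \S\ref{pospaces}.
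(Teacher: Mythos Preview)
Your proof is correct and takes the same approach as the paper: the homotopy $H_t$ you write down, prepending $a$ on $[0,t)$ and rescaling $\phi$ onto $[t,1)$, is exactly the paper's $X_t=tx_1+(1-t)X$ (after the paper's normalisation $a=x_1$), and the retraction/section pair $(r,s)$ matches the paper's $(\pi,\iota)$. The only substantive difference is in the continuity check: the paper works with the $L_1$ metric and obtains the clean closed-form $L_1(X_t,Y_t)=t\,d(x_1,y_1)+(1-t)L_1(X,Y)$ together with $L_1(X_s,X_t)\le n|s-t|$, which makes the argument a two-line Lipschitz estimate, whereas your convergence-in-measure sketch is correct in outline but rougher (in particular the phrase ``in case $a\ne a'$'' and the breakpoint bookkeeping could be tightened). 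The remark that $a$ is an atom is not actually used; only $a\le\phi(s)$ matters.
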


We will prove and use this theorem only for metrizable $P$, but the proof straightforwardly extends 
to the non-metrizable case.

\begin{proof} Let $\pi\:A(P)\x P\to A(P)$ be the projection and let 
$\iota\:A(P)\to E(P)\subset A(P)\x P$ be defined by $\iota(a)=(a,a)$.
Clearly, $\pi\iota=\id_{A(P)}$, so in particular, $\iota$ is an embedding of $A(P)$ into $E(P)\subset |E(P)|$.
Let us show that $|E(P)|$ deformation retracts onto the image of $\iota$.
More precisely, we will construct a homotopy $h_t$ between $\id_{|E(P)|}$ and the composition
$|E(P)|\subset A(P)\x|P|\xr{\pi} A(P)\xr{\iota} E(P)\subset |E(P)|$.

Every pair $\big(a,\,\sum_{i=1}^n\lambda_i x_i\big)\in|E(P)|$, with $a\le x_1\le\dots\le x_n$, can also be 
written as $\big(a,\sum_{i=0}^n\lambda_i x_i\big)$, where $x_0=a$ and $\lambda_0=0$.
Thus we may assume without loss of generality that $a=x_1$.
Let $X=\sum_{i=1}^n\lambda_i x_i$ and let us define $h_t(x_1,X)=(x_1,X_t)$, where $X_t=tx_1+(1-t)X$.
Then $h_0=\id_{|E(P)|}$ and $h_1=\iota\pi$.
It remains to check that $h_t$ is continuous.

Let $Y=\sum_{i=1}^m\mu_i y_i$ and let $Y_t=ty_1+(1-t)Y$.
Then $L_1(X_t,Y_t)=td(x_1,y_1)+(1-t)L_1(X,Y)$.
If we use the $l_1$ product metric on the product $A(P)\x |P|$, then
$d\big(h_t(x_1,X),\,h_t(y_1,Y)\big)=d(x_1,y_1)+L_1(X_t,Y_t)\le 2d(x_1,y_1)+L_1(X,Y)$.
Also, we have $d\big(h_t(x_1,X),\,h_s(x_1,X)\big)=L_1(X_s,X_t)\le n|s-t|$.
It follows that $h_t$ is continuous.
\end{proof}

\subsection{Arens--Eels construction (probability measures)} \label{arens-eels}

A {\it finite measure} on a space $X$ is a function $\mu\:X\to\R$ with support in finitely many points.
Thus $\mu=\sum_{i=1}^n\mu_i\delta_{x_i}$, where $\mu_i=\mu(x_i)$ and $\delta_x\:X\to\R$ is the Dirac measure,
defined by $\delta_x(y)=1$ if $x=y$ and $0$ if $x\ne y$.
Let $\mu(X)=\sum_{x\in X}\mu(x)$, that is, $\mu(X)=\sum_{i=1}^n\mu_i$ in the previous notation.
If $\mu(X)=1$, then $\mu$ is called a {\it finite probability measure}.
Let $AE(X)$ denote the set of all finite measures on $X$ and $AE_V(X)$ its subset consisting of all $\mu$
with $\mu(X)=V$.
Clearly, $AE(X)$ is a real vector space and $AE_V(X)$ is an affine hyperplane in $AE(X)$ for each $V\in\R$.
Also, $AE_0(X)$ is vector subspace of $AE(X)$.

Let us fix a metric $d$ on $X$.
If $\nu\in AE_0(X)$, let $||\nu||=\inf\sum_{k=1}^r|\nu_k| d(x_k,y_k)$,
where the infimum is over all representations $\nu=\sum_{k=1}^r\nu_k(\delta_{x_k}-\delta_{y_k})$.
Such representations exist since $\nu(X)=0$.

If $\lambda,\mu\in AE_V(X)$, let $\rho(\lambda,\mu)=||\lambda-\mu||$.
In particular, we get a metric $\rho$ on the set $AE_1(X)$ of all finite probability measures, which is
called the {\it Kantorovich metric} or alternatively the Wasserstein (Vasershtein) metric.
If we understand $\mu$ and $\nu$ as distributions of masses, $\rho(\lambda,\mu)$ can be interpreted as 
the minimal amount of work needed to transport $\lambda$ into $\mu$.

Now let $P$ be a metrizable topological poset.
Given a formal sum $x=\sum_{i=1}^n\lambda_i x_i\in |P|$, we have the finite probability measure 
$\mu_x\bydef \sum\lambda_i\delta_{x_i}$.
Let $|P|_{AE}\subset AE_1(P)$ be the set of all such measures, endowed with the Kantorovich metric $\rho$.

\begin{example} Let $P=\{x_1,\dots,x_n\}$ linearly ordered by $x_i\le x_j$ iff $i\le j$ and with $d(x_i,x_j)=1$ 
whenever $i\ne j$.
Then $|P|_{AE}$ is isometric to a homoteth of the standard $(n-1)$-simplex 
$\Delta=\{(\lambda_1,\dots,\lambda_n)\in\R^n\mid \lambda_k\ge 0,\,\sum_{k=1}^n\lambda_k=1\}$ with the $l_1$ metric.

Indeed, given $x=\sum_{k=1}^n\lambda_k x_k\in |P|$ and $y=\sum_{k=1}^n\mu_k y_k\in |P|$, they correspond to
$\lambda=(\lambda_1,\dots,\lambda_n)\in\Delta$ and $\mu=(\mu_1,\dots,\mu_n)\in\Delta$, with
$l_1(\lambda,\mu)=\sum_{k=1}^n|\lambda_k-\mu_k|$.

On the other hand, let $S^+=\{k\in [n]\mid \lambda_k>\mu_k\}$ and $S^-=\{k\in[n]\mid\lambda_k<\mu_k\}$.
Also let $N^+=\sum_{i\in S^+}(\lambda_i-\mu_i)$ and $N^-=\sum_{j\in S_-}(\mu_j-\lambda_j)$.
Then $N^+-N^-=\sum_{k=1}^n(\lambda_k-\mu_k)=1-1=0$ and $N^++N^-=\sum_{k=1}^n|\lambda_k-\mu_k|=l_1(\lambda,\mu)$.
Hence $N^+=l_1(\lambda,\mu)/2$.

Now we have $\rho(\mu_x,\mu_y)=\sum_{i\in S^+}\sum_{j\in S^-}\nu_{ij}d(x_i,x_j)$, where each $\nu_{ij}>0$ and
$\mu_x-\mu_y=\sum_{i\in S^+}\sum_{j\in S^-}\nu_{ij}(\delta_{x_i}-\delta_{x_j})$.
Then $\lambda_i-\mu_i=\sum_{j\in S^-}\nu_{ij}$ for each $i\in S^+$ and
$\mu_j-\lambda_j=\sum_{i\in S^+}\nu_{ij}$ for each $j\in S^-$.
Hence $\rho(\mu_x,\mu_y)=\sum_{i\in S^+}\sum_{j\in S^-}\nu_{ij}=N^+=l_1(\lambda,\mu)/2$.
\end{example}

\subsection{Comparison} 

\begin{example}
The bijection $\Phi\:|P|_{AE}\to|P|_{HM}$, $\mu_x\mapsto f_x$, is not uniformly continuous in general, 
even if $P$ is a pospace with disrete uniform structure.
Indeed, let $P=\{x_i\mid i\in\N\}$ be set of natural numbers with its usual linear order but with elements
denoted by $x_0,x_1,\dots$ instead of $0,1,\dots$, and with $d(x_i,x_j)=1$ for every two distinct $i,j\in\N$.
Given an $n\in\N$, let $p_n=\frac1n x_0+\dots+\frac1n x_{n-1}$ and $q_n=\frac1n x_1+\dots+\frac1n x_n$.
Then $\rho(\mu_{p_n},\mu_{q_n})=\frac1n d(x_0,x_n)=\frac1n$.
On the other hand, $L_1(f_{p_n},f_{q_n})=\frac1n d(x_0,x_1)+\dots+\frac1n d(x_{n-1},x_n)=n\frac1n=1$.
\end{example}

\begin{remark} The deformation retraction in Theorem \ref{atoms} is uniformly continuous 
(in fact, Lipschitz) with respect to the Kantorovich metric. 
Indeed, in the notation of the proof of Theorem \ref{atoms}, $\rho(X_s,X_t)\le |s-t|$.
(Also, $\rho(X_t,Y_t)=td(x_1,y_1)+(1-t)\rho(X,Y)$.)
\end{remark}

\begin{theorem} \label{HM=AE}
If $P$ is a metrizable pospace, then $|P|_{HM}$ and $|P|_{AE}$ are homeomorphic.
\end{theorem}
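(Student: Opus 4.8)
The plan is to show that the canonical bijection $\Phi\:|P|_{AE}\to|P|_{HM}$, $\mu_x\mapsto\phi_x$, is a homeomorphism by proving separately that $\Phi^{-1}$ is $1$-Lipschitz and that $\Phi$ is sequentially continuous. The first assertion is immediate: for step functions $\phi_x,\phi_y\:[0,1)\to P$ the ``diagonal'' transport plan moving the mass of $\mu_x$ sitting over a parameter $t$ to the mass of $\mu_y$ sitting over $t$ has cost $\int_0^1 d\big(\phi_x(t),\phi_y(t)\big)\,dt$, so $\rho(\mu_x,\mu_y)\le L_1(\phi_x,\phi_y)$ and $\Phi^{-1}$ is $1$-Lipschitz. (After rescaling we may and do assume $d\le1$ on $P$, so that $L_1$-convergence of step functions is the same as convergence in measure.) Since both spaces are metrizable, it remains to show that $\mu^{(k)}\to\mu_x$ in the Kantorovich metric implies $\phi_{x^{(k)}}\to\phi_x$ in $L_1$.

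Write $x=\sum_{i=1}^n\lambda_i x_i$ with $x_1<\dots<x_n$ and every $\lambda_i>0$ (merging and deleting terms changes nothing), put $\Lambda_j=\sum_{i\le j}\lambda_i$, so $\phi_x\equiv x_j$ on $I_j=[\Lambda_{j-1},\Lambda_j)$, and fix $\eps>0$. Because $P$ is a pospace, $\le$ is closed in $P\x P$, so for $j<j'$ the pair $(x_{j'},x_j)$, which is not in $\le$, has a box neighborhood $U\x V$ disjoint from $\le$. Intersecting finitely many such boxes with small balls, I would obtain pairwise disjoint open balls $N_j\ni x_j$ with $\mathrm{diam}\,N_j<\eps$, with $\mu_x(\partial N_j)=0$ (a generic radius works, $\mu_x$ being finitely supported), and with the crucial property that for $j<j'$ no point of $N_{j'}$ is $\le$ any point of $N_j$.

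Kantorovich convergence implies weak convergence $\mu^{(k)}\to\mu_x$ (test against $1$-Lipschitz functions, for which $|\!\int g\,d\mu^{(k)}-\int g\,d\mu_x|\le\rho(\mu^{(k)},\mu_x)$), hence, $\bigcup_j N_j$ being a $\mu_x$-continuity set, $m_j^{(k)}:=\mu^{(k)}(N_j)\to\lambda_j$ and $r^{(k)}:=\mu^{(k)}\big(P\but\bigcup_j N_j\big)\to0$. The step function $\phi_{x^{(k)}}$ lists the atoms of $\mu^{(k)}$ in increasing order; by the crucial property, every atom lying in $N_j$ must precede in the parameter $[0,1)$ every atom lying in $N_{j'}$ whenever $j<j'$ (otherwise the chain order of $x^{(k)}$ would exhibit a point of $N_{j'}$ below a point of $N_j$). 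Hence the level set $P_j^{(k)}:=\phi_{x^{(k)}}^{-1}(N_j)$ has measure $m_j^{(k)}$, the sets $P_1^{(k)},\dots,P_n^{(k)}$ occur in this order in $[0,1)$ with only a set of measure $r^{(k)}$ interspersed, and therefore $\inf P_j^{(k)}\to\Lambda_{j-1}$ and $\sup P_j^{(k)}\to\Lambda_j$. Since $P_j^{(k)}$ has measure $m_j^{(k)}\to\lambda_j=\Lambda_j-\Lambda_{j-1}$ and lies in the interval $[\inf P_j^{(k)},\sup P_j^{(k)})$, which converges to $I_j$, I get $\sum_j|I_j\but P_j^{(k)}|\to0$. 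Bounding $d\big(\phi_{x^{(k)}}(t),\phi_x(t)\big)$ by $\mathrm{diam}\,N_j<\eps$ on $I_j\cap P_j^{(k)}$ and by $1$ elsewhere then yields $\limsup_k L_1(\phi_{x^{(k)}},\phi_x)\le\eps$; letting $\eps\to0$ completes the proof that $\Phi$ is a homeomorphism.

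The easy direction and the weak-convergence bookkeeping are routine; the heart of the matter — and the only place the pospace hypothesis is used — is the claim that the monotone rearrangements $\phi_{x^{(k)}}$ cannot ``fold'': closedness of $\le$ forces the $N_j$-portions of $\phi_{x^{(k)}}$ to keep their order and hence to settle near the intervals $I_j$. I expect this to be the delicate step, and note that it genuinely requires $\mu^{(k)}$ to converge (through $m_j^{(k)}\to\lambda_j$ and $r^{(k)}\to0$): without convergence the map $\Phi$ is not even uniformly continuous, by the Example preceding the theorem.
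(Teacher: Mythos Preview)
Your proof is correct and shares the paper's overall architecture: both arguments show $\rho\le L_1$ for the easy direction, and for the hard direction both exploit closedness of $\le$ to produce pairwise disjoint neighborhoods of the $x_j$ with the order-preserving property you call ``crucial'', then argue that any nearby measure places most of its mass in these neighborhoods, so that the chain underlying $y$ visits them in the right order and the step function cannot fold. The difference lies in how the mass-concentration step is executed. The paper works directly with a near-optimal transport plan $\mu_x-\mu_y=\sum_j\nu_j(\delta_{X_j}-\delta_{Y_j})$, splits the indices into ``near'' ($d(X_j,Y_j)\le\delta$) and ``far'' parts, and builds an explicit intermediate step function $f'_y$ from the transport data to estimate $L_1(f_x,f_y)$ via the triangle inequality through $f'_y$; this yields an explicit $\epsilon$--$\delta$ modulus but requires careful bookkeeping. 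Your route, by contrast, passes through weak convergence (using only the trivial direction of Kantorovich--Rubinstein duality) and then invokes Portmanteau on the $\mu_x$-continuity sets $N_j$ to get $m_j^{(k)}\to\lambda_j$ and $r^{(k)}\to 0$, which is more conceptual and avoids constructing the intermediate function; the price is that you obtain only sequential continuity and rely on standard measure-theoretic lemmas rather than keeping everything self-contained.
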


\begin{proof} Let us fix some metric on $P$.
Clearly, $\rho(\mu_x,\mu_y)\le L_1(f_x,f_y)$.

It remains to show that $\Phi\:|P|_{AE}\to|P|_{HM}$, $\mu_x\mapsto f_x$, is continuous.
We may assume that $P$ has diameter $\le 1$. 
Let $\eps>0$.
Let $x=\sum_{i=1}^n\lambda_i x_i$, where $x_1<\dots<x_n$, each $\lambda_i\ge 0$ and $\sum_{i=1}^n\lambda_i=1$.
Since $x_1\not\ge\dots\not\ge x_n$ and $\not\ge$ is open as a subset of $P\x P$, there exist pairwise disjoint
neighborhoods $U_1,\dots,U_n$ of $x_1,\dots,x_n$ such that $y_1\not\ge\dots\not\ge y_n$ whenever each $y_i\in U_n$.
Let $\delta>0$ be such that $(2n+1)(n+1)\delta<\eps$, each $\lambda_i>\delta$, and each $U_i$ contains the ball 
of radius $\delta$ about $x_i$.
Let $y=\sum_{i=1}^m\mu_i y_i$, where $y_1<\dots<y_m$, each $\mu_i\ge 0$ and $\sum_{i=1}^m\mu_i=1$,
and $\rho(\mu_x,\mu_y)<\delta^2$.

Then there exists a representation $\mu_x-\mu_y=
\sum_{j=1}^r\nu_j(\delta_{x_{m_j}}-\delta_{y_{n_j}})$ for some $r$, $m_j$ and $n_j$ such that
each $\nu_j\ge 0$ and $\sum_{j=1}^r\nu_j d(x_{m_j},y_{n_j})\le\delta^2$.
Let us write $x_{m_j}=X_j$ and $y_{n_j}=Y_j$.
Thus $\mu_x-\mu_y=\sum_{j=1}^r\nu_j(\delta_{X_j}-\delta_{Y_j})$ and $\sum_{j=1}^r\nu_j d(X_j,Y_j)\le\delta^2$.
Without loss of generality, $X_1\le\dots\le X_r$, and if $X_p=X_{p+1}$, 
then $Y_p<Y_{p+1}$.
Suppose that $X_p=X_{p+1}=\dots=X_q$, where either $p=1$ or $X_{p-1}<X_p$, and either $q=r$ or $X_q<X_{q+1}$.
Let us note that $m_p=m_{p+1}=\dots=m_q$.

Let $S_{m_p}=\{j\mid p\le j\le q,\,d(X_j,Y_j)\le\delta\}$ and $T_{m_p}=\{j\mid p\le j\le q,\,d(X_j,Y_j)>\delta\}$.
Then $\sum_{j\in T_i}\nu_j\le\sum_{j\in T_i}\nu_j d(X_j,Y_j)/\delta\le
\sum_{j=1}^r\nu_j d(X_j,Y_j)/\delta\le\delta$.
Since $\sum_{j=p}^q\nu_j=\lambda_{m_p}>\delta$, we have $S_{m_p}\ne\emptyset$.
Let $k_i=\min S_i$ and $l_i=\max S_i$.
Let us note that $m_{k_i}=m_{l_i}=i$.
Since $d(X_{l_i},Y_{l_i})\le\delta$ and $d(X_{k_{i+1}},Y_{k_{i+1}})\le\delta$, we have $Y_{l_i}\in U_{m_{l_i}}=U_i$
and $Y_{k_{i+1}}\in Y_{m_{k_{i+1}}}=U_{i+1}$.
Hence $Y_{l_i}\not\ge Y_{k_{i+1}}$, and therefore $Y_{l_i}<Y_{k_{i+1}}$ for each $i<n$.
Thus we have $Y_{k_1}<\dots<Y_{l_1}<Y_{k_2}<\dots<Y_{l_2}<Y_{k_3}<\dots$.
In other words, if $S=S_1\cup\dots\cup S_n$, then $Y_i<Y_j$ whenever $i,j\in S$ and $i<j$.
On the other hand, if $T=T_1\cup\dots\cup T_n$, then $\sum_{j\in T}\nu_j=\sum_{i=1}^n\sum_{j\in T_i}\nu_j\le n\delta$. 

Let $I_k=\big[\sum_{j<k}\nu_j,\sum_{j\le k}\nu_j\big)$.
Let us define a step function $f'_y\:[0,1)\to X$ by $f'_y(t)=Y_j$ if $t\in I_j$.
Let $I_S=\bigcup_{j\in S}I_j$ and $I_T=\bigcup_{j\in T}I_j$.
Then $d\big(f_x(t),f_y'(t)\big)\le\delta$ for each $t\in I_S$ and $\mu(I_T)=\sum_{j\in T}\nu_j\le n\delta$.
Hence $L_1(f_x,f'_y)=\int_I d\big(f_x(t),f'_y(t)\big)\,dt=\int_{I_S} d\big(f_x(t),f'_y(t)\big)\,dt+
\int_{I_T} d\big(f_x(t),f'_y(t)\big)\,dt\le \delta+n\delta=(n+1)\delta$,
using that $\mu(I_S)\le 1$ and $d\big(f_x(t),f_y'(t)\big)\le 1$ for each $t\in I_T$.
On the other hand, since $Y_i<Y_j$ whenever $i,j\in S$ and $i<j$, we have $f_y(t)=f'_y(t)$ unless $y$ belongs to
the $\mu(I_T)$-neighborhood of the set $\big\{\sum_{i\le k}\lambda_i\mid k=0,\dots,n\big\}$.
Hence $L_1(f_y,f'_y)\le 2(n+1)\mu(I_T)\le 2n(n+1)\delta$.
Thus $L_1(f_x,f_y)\le L_1(f_x,f'_y)+L_1(f'_y,f_y)\le (2n+1)(n+1)\delta\le\eps$.
\end{proof}

\section{Simplicial hyperspace} \label{simplicial hyperspace}

\subsection{Hausdorff metric} \label{Hausdorff metric}

If $X$ is a metric space, the hyperspace $K(X)$ of its nonempty compact subsets is endowed with 
the Hausdorff metric
\[d(A,B)=\max\big(\sup_{a\in A}d(a,B),\,\sup_{b\in B}d(A,b)\big),\] where 
$d(a,B)=d(B,a)=\inf_{b\in B}d(a,b)$.
Clearly, $X$ isometrically embeds in $K(X)$ via $x\mapsto\{x\}$.

Apart from being a metric space, $K(X)$ is also a poset by inclusion.

\begin{lemma} \label{Hausdorff pospace} $K(X)$ is a pospace.
\end{lemma}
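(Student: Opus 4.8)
The plan is to verify directly that the inclusion relation $\{(A,B)\in K(X)\x K(X)\mid A\i B\}$ is closed in $K(X)\x K(X)$, using the metrizability criterion for pospaces quoted in \S\ref{pospaces}: it suffices to show that if $A_n\i B_n$ for each $n$, and $A_n\to A$ and $B_n\to B$ in the Hausdorff metric, then $A\i B$. So first I would fix such sequences and an arbitrary point $a\in A$, and aim to show $a\in B$; since $B$ is compact, hence closed, it is enough to show $d(a,B)=0$.

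The key estimate is the triangle-type inequality $d(a,B)\le d(a,A_n)+d(A_n,B_n)+d(B_n,B)$, where $d(a,A_n)=\inf_{x\in A_n}d(a,x)$ and $d(B_n,B)$ denotes the Hausdorff distance. Here $d(A_n,B_n)=0$ because $A_n\i B_n$ forces $\sup_{x\in A_n}d(x,B_n)=0$ (every point of $A_n$ lies in $B_n$), and indeed $A\i B$ for compact sets is equivalent to $d(A,B)=0$ in the one-sided sense $\sup_{a\in A}d(a,B)=0$. Next, $d(a,A_n)\le d(a,A)+ (\text{Hausdorff distance } A\text{ to }A_n)$, wait — more simply, since $a\in A$ we have $d(a,A_n)\le \sup_{x\in A}d(x,A_n)\le d(A,A_n)\to 0$, and $d(B_n,B)\to 0$ by hypothesis. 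Combining, $d(a,B)\le d(A,A_n)+d(B_n,B)\to 0$, so $d(a,B)=0$ and $a\in B$. Since $a\in A$ was arbitrary, $A\i B$.

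I would phrase the one auxiliary fact cleanly as a preliminary observation: for nonempty compact $A,B$, one has $A\i B$ if and only if $\sup_{a\in A}d(a,B)=0$; this is immediate since $B$ is closed. The only mild subtlety — and the place where a careless argument could go wrong — is making sure every distance in the chain is controlled by a genuinely converging quantity: the point $a$ stays fixed while $A_n$ moves, so I must bound $d(a,A_n)$ by the Hausdorff distance $d(A,A_n)$ rather than by $d(a,A)$ (which is $0$) plus something uncontrolled. Once that is handled the argument is just a three-term triangle inequality, so there is no real obstacle; the proof is short.
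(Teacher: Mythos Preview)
Your argument is correct. You use the sequential criterion for closedness (valid since $K(X)$ is metrizable): given $A_n\subset B_n$ with $A_n\to A$ and $B_n\to B$, you fix $a\in A$ and bound $d(a,B)\le d_H(A,A_n)+d_H(B_n,B)\to 0$, using that $a$ is approximated by points of $A_n\subset B_n$. The only slight notational wobble is that your ``$d(A_n,B_n)$'' and ``$d(B_n,B)$'' in the chain are one-sided Hausdorff distances rather than the symmetric one, but you clarify this and the bound by the symmetric Hausdorff distance is all you need.

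The paper's proof is the contrapositive of yours: it shows directly that $\not\le$ is open by an explicit $\eps$-ball argument. If $B\not\subset A$, pick $b\in B\setminus A$, set $\eps=d(A,b)/3$, and observe that any $(A',B')$ in the $\eps$-ball around $(A,B)$ still has a point of $B'$ outside $A'$. Both arguments are short and rest on the same metric facts; yours trades the explicit neighborhood for the sequential formulation, which is perhaps a line longer but equally elementary.
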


\begin{proof} Let us show that $\not\le$ is open in $K(X)\x K(X)$. 
Suppose $A,B\in K(X)$, $A\not\ge B$.
Thus $B$ contains a point $b\notin A$. 
Let $\eps=d(A,b)/3$.
If $B'$ is $\eps$-close to $B$, then $B'$ contains a point $b'$ such that $d(b,b')\le\eps$. 
Hence $d(A,b')\ge 3\eps-\eps>\eps$.
Therefore if $A'$ is $\eps$-close to $A$, then $b'\notin A'$.
Hence $A'\not\ge B'$.
\end{proof}

The following lemma is well-known.

\begin{lemma} {\rm (See \cite{Bee2}.)} \label{Hausdorff} Let $A$ and $B$ be nonempty subsets of a metric space $X$.
Then

(a) $\sup_{a\in A}d(a,B)=\sup_{x\in X} d(x,B)-d(x,A)$;

(b) $d(A,B)=\sup_{x\in X}|d(x,A)-d(x,B)|$.
\end{lemma}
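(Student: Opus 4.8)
The plan is to prove (a) first and then deduce (b) from it, using symmetry together with the elementary identity $\max\big(\sup_x f(x),\sup_x g(x)\big)=\sup_x\max\big(f(x),g(x)\big)$.

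For (a) I would establish the two inequalities separately. The inequality $\sup_{a\in A}d(a,B)\le\sup_{x\in X}\big(d(x,B)-d(x,A)\big)$ is immediate: every $a\in A$ satisfies $d(a,A)=0$, so $d(a,B)=d(a,B)-d(a,A)$ is one of the values over which the right-hand supremum ranges. For the reverse inequality, fix $x\in X$; the one nontrivial ingredient is the triangle inequality for point-to-set distances, $d(x,B)\le d(x,a)+d(a,B)$ for every $a\in A$, which follows by taking the infimum over $b\in B$ in $d(x,b)\le d(x,a)+d(a,b)$. Bounding $d(a,B)\le\sup_{a'\in A}d(a',B)$ and then taking the infimum over $a\in A$ gives $d(x,B)\le d(x,A)+\sup_{a'\in A}d(a',B)$, i.e.\ $d(x,B)-d(x,A)\le\sup_{a'\in A}d(a',B)$; taking the supremum over $x$ completes (a).

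For (b) I would combine the definition $d(A,B)=\max\big(\sup_{a\in A}d(a,B),\,\sup_{b\in B}d(A,b)\big)$ with (a) applied to the pair $(A,B)$ and to the pair $(B,A)$, obtaining $\sup_{a\in A}d(a,B)=\sup_x\big(d(x,B)-d(x,A)\big)$ and $\sup_{b\in B}d(A,b)=\sup_x\big(d(x,A)-d(x,B)\big)$. Since the pointwise maximum commutes with the supremum, the maximum of these two suprema equals $\sup_x\max\big(d(x,B)-d(x,A),\,d(x,A)-d(x,B)\big)=\sup_x|d(x,A)-d(x,B)|$, which is exactly (b).

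I do not expect a real obstacle here: the lemma is ``well-known'' and essentially bookkeeping. The only point that needs care is the reverse inequality in (a), namely keeping the quantifiers in the right order --- fix $x$, bound uniformly in $a$, then take the infimum over $a$ --- and, if one wants full generality for unbounded $A,B$, noting that all the manipulations remain valid with values in $[0,\infty]$ (and that the first inequality already forces the two sides of (a) to be $+\infty$ simultaneously).
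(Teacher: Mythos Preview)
Your proof is correct and follows essentially the same route as the paper's: establish (a) by the two inequalities (the easy one via $d(a,A)=0$, the other via the triangle inequality for point-to-set distances), then deduce (b) by symmetry. The only difference is that the paper, having announced it only needs the compact case, picks actual points $\alpha\in A$ and $\beta\in B$ realizing the infima, whereas you keep the infimum and take it at the end; your version is the ``slightly different'' general-case argument the paper alludes to but omits.
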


For the reader's convenience, we recall the proof for the case where $A$ and $B$ are compact
(the proof of the general case is only slightly different, but we do not need it).

\begin{proof}[Proof. (a)] Since $d(a,B)=d(a,B)-d(a,A)$, we have the $\le$ inequality.

To prove the $\ge$ inequality, is suffices to show that $d(x,B)-d(x,A)\le\sup_{a\in A}d(a,B)$
for each $x\in X$.
Since $A$ is compact, $d(x,A)=d(x,\alpha)$ for some $\alpha\in A$, and since $B$ is compact,
$d(\alpha,B)=d(\alpha,\beta)$ for some $\beta\in B$.
Then $d(x,B)\le d(x,\beta)\le d(x,\alpha)+d(\alpha,\beta)=d(x,A)+d(\alpha,B)$, and hence
$d(x,B)-d(x,A)\le d(\alpha,B)\le\sup_{a\in A} d(a,B)$. 
\end{proof}

\begin{proof}[(b)] This follows immediately from (a).
\end{proof}

Let $X$ be a metric space of diameter $\le 1$. 
By Lemma \ref{Hausdorff}(b) $K(X)$ admits an isometric embedding $e$ into the vector space $C_b(X)$ of 
bounded continuous functions $f\:X\to\R$ with the norm $||f||=\sup_{x\in X}|f(x)|$, defined by $e(A)(x)=d(x,A)$ 
for each nonempty compact $A\subset X$.
In fact, the image of $e$ lies in the convex subset of $C_b(X)$ consisting of 1-Lipschitz (in particular,
uniformly continuous) functions $X\to [0,1]$.

It is well-known that the image of the composition $X\subset K(X)\xr{e}C_b(X)$ is a linearly independent set
\cite{BlKl}.

\subsection{Simplicial hyperspace of a metric space}

Given a nonempty finite chain in $K(X)$, that is, a monotone map $C\:[n]\to K(X)$ from the totally ordered set 
$[n]=\{1,\dots,n\}$, $n\ge 1$, let us write $C_i=C(i)$, so that $C_1\subset\dots\subset C_n$, and let $|C|$ 
denote the convex hull of $e\big(C([n])\big)=\{e(C_1),\dots,e(C_n)\}$ in $C_b(X)$.
The following lemma guarantees that the convex hulls of two injective chains intersect precisely along 
the convex hull of their maximal common subchain.

\begin{lemma} \label{simplicial hyperspace-def}
Let $A\:[n]\to K(X)$ and $B\:[m]\to K(X)$ be injective monotone maps.

(a) Let $f_i=e(A_i)$ and $g_j=e(B_j)$, and suppose that $A_n\ne X$ if $n>0$ and $B_m\ne X$ if $m>0$.
If $\sum_{i=1}^n\lambda_i f_i=\sum_{j=1}^m\mu_j g_j$, where each $\lambda_i>0$ and each $\mu_j>0$, then $m=n$, 
each $f_i=g_i$ and each $\lambda_i=\mu_i$.

(b) The simplexes $|A|$ and $|B|$ either coincide or have disjoint interiors.

(c) $|A|\cap|B|=|C|$, where $C\:[k]\to K(X)$ is the pullback of $A$ and $B$ (that is, $C([k])=A([n])\cap B([m])$
and $C$ is injective).
\end{lemma}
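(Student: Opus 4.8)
The plan is to reduce everything to part (a), which is the combinatorial core, and then deduce (b) and (c) by routine set-theoretic manipulation of convex hulls of linearly ordered point sets. For part (a), the strategy is to exploit the structure of the functions $f_i=e(A_i)$, namely $f_i(x)=d(x,A_i)$, and the fact that $A_1\subsetneq\dots\subsetneq A_n$ forces $f_1\ge f_2\ge\dots\ge f_n$ pointwise, with strict inequality somewhere for consecutive terms. First I would record the key separation fact: since $A_i\subsetneq A_{i+1}$, there is a point $a\in A_{i+1}\setminus A_i$, so $f_{i+1}(a)=0$ while $f_i(a)=d(a,A_i)>0$; thus the ``descending telescope'' $f_1\ge\cdots\ge f_n$ is strictly descending at each step on a suitable test point. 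Writing $h=\sum\lambda_i f_i=\sum\mu_j g_j$, the idea is that $h$ determines, and is determined by, the ordered list of its ``level jumps''. Concretely, consider the difference functions $f_i-f_{i+1}\ge 0$; then $h = f_n + \sum_{i=1}^{n-1}(\lambda_1+\dots+\lambda_i)(f_i-f_{i+1})$ after an Abel summation, and similarly for the $g_j$. I would argue that the functions $f_n,\ f_{n-1}-f_n,\dots,f_1-f_2$ (all nonnegative, and $f_i-f_{i+1}$ ``supported away from $A_{i+1}$'' in a quantitative sense via the separation point) are linearly independent in a way strong enough that matching $h$ forces $n=m$, $f_i=g_i$, $\lambda_i=\mu_i$. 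The cleanest route: evaluate at the separation points and use the strict monotonicity of the partial sums $\lambda_1+\dots+\lambda_i$ (strictly increasing since $\lambda_i>0$) to read off the chain and the coefficients. The hypotheses $A_n\ne X$ and $B_m\ne X$ are needed precisely so that $f_n=e(A_n)$ is not identically $0$, i.e. so the ``top'' coefficient is visible; without it one could not distinguish $A_n=X$ (giving $f_n\equiv 0$) from dropping the last term.

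For part (b): the interior of the simplex $|A|$ consists exactly of the convex combinations $\sum\lambda_i f_i$ with all $\lambda_i>0$ (since $A$ is injective, $e\circ A$ is an affinely independent point configuration in $C_b(X)$ by part (a) applied with $B=A$, so $|A|$ is a genuine geometric simplex). If the interiors of $|A|$ and $|B|$ meet, pick a common point; applying (a) to this point (noting $A_n\ne X$, $B_m\ne X$ hold because — wait, here I must be careful: (b) is stated without the $\ne X$ hypothesis). So for (b) I would instead observe that if some $A_i=X$ or $B_j=X$, the corresponding $e$-image is the zero function, and handle this by a direct argument: $e(X)=0$ is an extreme point issue only at the ``bottom'', but since the chains are \emph{ascending} for inclusion and $e$ reverses... actually $d(x,X)=0$ for all $x$, so $e(X)$ is the minimum; the point $X$, if present, must be the top of the chain, and then it contributes the zero vector as a vertex. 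A convex combination involving the zero vertex with positive weight equals the same combination of the remaining vertices scaled down, so one reduces to the case where neither top is $X$, where (a) applies and gives equality of the two simplexes outright (hence equal interiors). Thus interiors are either disjoint or the simplexes coincide.

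For part (c): one inclusion, $|C|\subseteq|A|\cap|B|$, is immediate since $C([k])\subseteq A([n])$ and $C([k])\subseteq B([m])$, so $|C|$ is a common face. For the reverse inclusion, take $z\in|A|\cap|B|$. Write $z$ using only the vertices it actually charges with positive weight: $z=\sum_{i\in I}\lambda_i f_i=\sum_{j\in J}\mu_j g_j$ with $I\subseteq[n]$, $J\subseteq[m]$, all weights positive. After deleting any occurrence of the top vertex equal to $X$ as in the discussion of (b), part (a) applies to these reduced sums and yields a bijection between the charged vertices matching $f$'s with $g$'s and $\lambda$'s with $\mu$'s; in particular $\{A_i: i\in I\}=\{B_j:j\in J\}$ as subsets of $K(X)$ (using that $e$ is injective, Lemma \ref{Hausdorff}(b)), so these vertices all lie in $A([n])\cap B([m])=C([k])$, whence $z\in|C|$. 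I expect the main obstacle to be the bookkeeping in part (a): making the ``read off the chain from the level jumps'' argument fully rigorous requires choosing the test points (separation points $a$ with $f_i(a)>0=f_{i+1}(a)$) carefully and checking that the strictly increasing partial sums $\lambda_1<\lambda_1+\lambda_2<\cdots$ really do let one recover $n$ and each $\lambda_i$ even when some $g_j$ happens to coincide with some $f_i$ out of order — which is where one invokes the pointwise monotonicity $f_1\ge\cdots\ge f_n$ together with injectivity to pin down the order.
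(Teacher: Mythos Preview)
Your handling of (b) and (c) is essentially the paper's, modulo phrasing. (One quibble in (b): dropping the vertex $e(X)=0$ does not ``scale down'' the remaining combination; it simply removes a zero summand, leaving $\sum_{i<n}\lambda_i f_i$ with total weight $1-\lambda_n<1$. This is harmless because (a) does not assume the weights sum to $1$, but your description of what happens is garbled.)

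The real issue is (a). Your Abel summation rewrite (which should read $h=\Lambda_n f_n+\sum_{i<n}\Lambda_i(f_i-f_{i+1})$, not with $f_n$ alone) and the observation that $f_1\ge\cdots\ge f_n$ are fine, but ``linear independence'' of the telescope functions $f_n,\,f_i-f_{i+1}$ only tells you that the $\lambda_i$ are determined \emph{once the $f_i$ are fixed}. It gives no mechanism for comparing the $A$-chain with the $B$-chain. Evaluating at your separation points $a\in A_{i+1}\setminus A_i$ yields $h(a)=\lambda_1 f_1(a)+\cdots+\lambda_i f_i(a)$ on the $A$-side, but on the $B$-side you only get $h(a)=\sum_j\mu_j g_j(a)$ with no control over which $B_j$ contain $a$; the partial-sum monotonicity of the $\Lambda_i$ does not help here.

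The paper supplies the missing idea: since each $\lambda_i>0$ and each $f_i\ge 0$, the zero set of $h$ is exactly $\bigcap_i A_i=A_1$; by the same reasoning it is $B_1$, so $A_1=B_1$ and $f_1=g_1$. Only \emph{after} this do the separation points become useful: at $x_1\in A_2\setminus A_1$ one has $h(x_1)=\lambda_1 f_1(x_1)$, while $h(x_1)\ge\mu_1 g_1(x_1)=\mu_1 f_1(x_1)$, giving $\lambda_1\ge\mu_1$; symmetry gives equality, and one peels off $\lambda_1 f_1=\mu_1 g_1$ and inducts. Your proposal never isolates this zero-set step, and without it the ``read off the chain from the level jumps'' program does not get started.
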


\begin{proof}[Proof. (a)] Arguing by induction, we may assume that the assertion is known if $n$ or $m$ or 
both are replaced by smaller numbers.
Let $F=\sum_{i=1}^n\lambda_i f_i$ and $G=\sum_{j=1}^m\mu_j g_j$.
If $m>0$, then $B_m\ne X$, hence $G$ is not identically zero. 
Therefore so is $F$ and thus $n>0$.
Similarly, $n>0$ implies $m>0$.
Thus we may assume that $n>0$ and $m>0$.

Since each $\lambda_i>0$ and each $f_i(x)\ge 0$ for each $x\in X$, we have $F(x)=0$ if and only if 
$f_1(x)=\dots=f_n(x)=0$, i.e., $x\in A_1$.
Similarly, $G(x)=0$ if and only if $x\in B_1$.
Hence $A_1=B_1$ and so $f_1=g_1$.
Since $A$ is injective, there exists an $x_1\in A_2\but A_1$, and we have $F(x_1)=\lambda_1f_1(x_1)$
and $G(x_1)\ge\mu_1g_1(x_1)=\mu_1f_1(x_1)$.
Since $f_1(x_1)>0$, we get $\lambda_1\ge\mu_1$.
Similarly, $\lambda_1\le\mu_1$, and thus in fact $\lambda_1=\mu_1$.
Then $\sum_{i=2}^n\lambda_i f_i=\sum_{j=2}^m\mu_i g_i$, and by the induction hypothesis $m=n$, each $f_i=g_i$
for $i\ge 2$ and each $\lambda_i=\mu_i$ for $i\ge 2$.
\end{proof}

\begin{proof}[(b)] If $|A|$ and $|B|$ have non-disjoint interiors, then 
$\sum_{i=1}^n\lambda_i f_i=\sum_{j=1}^m\mu_j g_j$ for some $\lambda_i>0$ and $\mu_j>0$ such that
$\sum_{i=1}^n\lambda_i=1=\sum_{j=1}^m\mu_j$.
Let $N$ be the maximal number such that $A_N\ne X$ (so it must be either $n$ or $n-1$)
and let $M$ be the maximal number such that $B_M\ne X$ (so it must be either $m$ or $m-1$).
Then $\sum_{i=1}^N\lambda_i f_i=\sum_{j=1}^M\mu_j g_j$ and by (a) we have $M=N$,
each $f_i=g_i$ for $i\le M$ and each $\lambda_i=\mu_i$ for $i\le M$.
If $m=n=M$, then $|A|=|B|$ and we are done.
If $m=M+1$, then $1-(\mu_1+\dots+\mu_M)=1-(\lambda_1+\dots+\lambda_M)=\lambda_m>0$,
so $n=m$ and $\mu_m=1-(\mu_1+\dots+\mu_M)=\lambda_m$.
Also $f_m=g_m=e(X)$ and so we again have $|A|=|B|$.
The case $n=M+1$ is similar.
\end{proof}

\begin{proof}[(c)] This is standard.
Trivially $|C|\subset|A|\cap |B|$.
If $x\in|A|\cap|B|$, then $x$ lies in the interiors of $|A'|$ and $|B'|$ for some injective subchains 
$A'\:[n']\to[n]\xr{A}K(X)$ and $B'\:[m']\to[m]\xr{B}K(X)$.
Then by (b), $|A'|=|B'|$. 
Hence $|A'|\subset|C|$ and so $x\in |C|$.
\end{proof}

\subsection{Examples}

The following series of examples, which is not used in the sequel, analyzes the metric on the convex hull
in $C_b(X)$ of an individual chain in $K(X)$.

\begin{example} \label{discrete-hull}
(a) Let us consider the finite metric space $X_n=\{a_1,\dots,a_n\}$ with $d(a_i,a_j)=1$ for $i\ne j$. 
Let $A_i=\{a_1,\dots,a_i\}$, and let $\Delta_{n-1}=|A|$, the $(n-1)$-simplex spanned by the vectors 
$e(A_1),\dots,e(A_n)$ in $C_b(X_n)$.
Clearly, $C_b(X_n)$ is nothing but $\R^n$ with the $l_\infty$ norm $||(x_1,\dots,x_n)||=\max(x_1,\dots,x_n)$, 
and its points $e(A_1),\dots,e(A_n)$ are of the form $(0,1,\dots,1),\,(0,0,1,\dots,1),\dots,(0,\dots,0)$.
Hence $\Delta_{n-1}$ is the standard skew $(n-1)$-simplex $\{(0,x_2,\dots,x_n)\mid 0\le x_2\le\dots\le x_n\le 1\}$ 
with the $l_\infty$ metric.

(b) Let $X$ be any metric space with $d(x,y)=1$ for $x\ne y$ and let $B\:[n]\to K(X)$ be any injective chain.
Let $X_n$ be as in (a) and let $g\:X_n\to X$ be an embedding such that $g(a_1)\in B_1$ and each 
$g(a_{i+1})\in B_{i+1}\but B_i$.
Then $g$ induces the restriction map $g^*\:C_b(X)\to C_b(X_n)$, which clearly restricts to an isometry between 
$|B|$ and $\Delta_{n-1}$.
\end{example}

\begin{example} (a) Let $X$ be a metric space consisting of $3$ points: $a,b,c$.
Let $A_1=\{a\}$, $A_2=\{a,b\}$ and $A_3=\{a,b,c\}$.
Let $p=d(a,b)$, $q=d(b,c)$ and $r=d(a,c)$.
We have $P\bydef d(A_1,A_2)=p$, $Q\bydef d(A_2,A_3)=\min(q,r)$ and $R\bydef d(A_1,A_3)=\max(p,r)$.
Let us note that $P,Q\le R\le P+Q$.
Then $|A|$ is the $2$-simplex spanned by $e(A_1)=(0,p,r)$, $e(A_2)=(0,0,Q)$ and $e(A_3)=(0,0,0)$ in $C_b(X)=\R^3$ 
with the $l_\infty$ metric.
The edges of $|A|$ are of lengths $Q$, $\max(p,r)=R$ and $\max(p,r-Q)=\max(p,r-q,r-r)=P$ (using that $p+q\ge r$).

Let us note that when $R>P$, we have $r=R$ and consequently $P$, $Q$, $R$ determine the vertices 
$(0,P,R)$, $(0,0,Q)$ and $(0,0,0)$ of $|A|$.
Thus when $R>P$, the metric on $|A|$ is determined by the edge lengths (i.e., by its restriction to the set 
of vertices).

(b) Let $Y$ be a metric space consisting of $4$ points: $a,b,c_+,c_-$.
Let $B_1=\{a\}$, $B_2=\{a,b\}$ and $B_3=\{a,b,c_+,c_-\}$.
Let $p=d(a,b)$, $q_\pm=d(b,c_\pm)$ and $r_\pm=d(a,c_\pm)$.
We have $P\bydef d(B_1,B_2)=p$, $Q\bydef d(B_2,B_3)=\max(Q_+,Q_-)$, where $Q_\pm=\min(q_\pm,r_\pm)$, and 
$R\bydef d(B_1,B_3)=\max(p,r_+,r_-)$.
Then $|B|$ is the $2$-simplex spanned by $e(B_1)=(0,p,r_+,r_-)$, $e(B_2)=(0,0,Q_+,Q_-)$ and 
$e(B_3)=(0,0,0,0)$ in $C_b(Y)=\R^4$ with the $l_\infty$ metric.
Then the edges of $|B|$ are of lengths $\max(Q_+,Q_-)=Q$, $\max(p,r_+,r_-)=R$ and $\max(p,r_+-Q_+,r_--Q_-)=P$.

Let us note that the metric of $|B|$ is not determined by the edge lengths even when $R>P$, and consequently
$|B|$ is generally not isometric to any of the simplexes $|A|$ described in (a).
Indeed, the distance from the vertex of $|B|$ at the origin to the middle of the opposite side
equals $L\bydef \max\big(p,\frac{r_++Q_+}2,\frac{r_-+Q_-}2\big)$.
Let us assume for simplicity that $p<q_\pm<r_\pm$, so that $Q_\pm=q_\pm$.
Then $Q=\max(q_+,q_-)$ and $R=\max(r_+,r_-)$ do not determine $L=\max\big(\frac{r_++q_+}2,\frac{r_-+q_-}2\big)$.
For instance, if $q_+=q_--\eps$ and $r_+=r_-+\eps$ for some $\eps>0$, then 
$L=(q_-+r_+-\eps)/2=(Q+R-\eps)/2$, where $\eps$ can vary.

(c) Let $Z$ be any metric space and let $C\:[n]\to K(Z)$ be any injective chain.
Since the $C_i$ are compact, for each $i<j$, $d(C_i,C_j)=d(C_i,x_{ij})$ for some $x_{ij}\in C_j$.
Let $Z'$ be the finite subspace of $Z$ consisting of the $x_{ij}$ for all $i<j$.
Then the restriction map $r\:C_b(Z)\to C_b(Z')$ restricts to an isometry on the vertices of $|C|$.
However, $r$ need not restrict to an isometry on $|C|$, because the metric on $r(|C|)$ generally
depends on $d(x_{ij},C_k)$, which depend on the choice of the $x_{ij}$.
Indeed, for $n=2$ the metric on $r(|C|)\subset\{0\}\x\R^3\subset\R^4$ is as described in (b), and we have
seen that it does depend on the additional parameters.
\end{example}

\subsection{Simplicial hyperspace of a metrizable space}

If $X$ is a metric space of diameter $\le 1$, let $K_\Delta(X)$ denote the union 
$\bigcup_{C\in K(X)^{[n]},\,n\in\N}|C|$ of the convex hulls in $C_b(X)$ of all nonempty finite chains in $K(X)$.
When all distances in $X$ are equal to $1$, $K_\Delta(X)$ is isometric to the geometric realization of
$K(X)$ as a (discrete) poset \cite{M3} (see Example \ref{discrete-hull}). 
In general, let us note that although the topology of $C_b(X)$ does not depend on the metric of $X$, 
the subset $K_\Delta(X)$ of $C_b(X)$ depends on the embedding $e\:K(X)\to C_b(X)$, which in turn depends 
on the metric of $X$.

Let $X_+=X\sqcup\{p\}$, where $d(p,x)=1$ for each $x\in X$.
Here $X$ retains the original metric of diameter $\le 1$, so the inclusion $K(X)\subset K(X_+)$ is an isometry.
Let $K^+_\Delta(X)$ be the union of the convex hulls in $C_b(X_+)$ of all nonempty finite chains in 
$K(X)\subset K(X_+)$.

\begin{lemma} \label{+isometry} $K^+_\Delta(X)$ is isometric to $K_\Delta(X)$.
\end{lemma}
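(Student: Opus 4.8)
The plan is to produce the isometry explicitly as the restriction of the restriction homomorphism. Let $r\colon C_b(X_+)\to C_b(X)$ be given by $f\mapsto f|_X$; it is linear, and since $X$ is a subspace of $X_+$ it is norm-nonincreasing. The one elementary computation needed is that $d(p,A)=1$ for every nonempty compact $A\subset X$, which is immediate from $d(p,x)=1$ for all $x\in X$ together with $A\ne\emptyset$. Consequently, for such an $A$ the function $f_A\colon y\mapsto d(y,A)$ on $X_+$ restricts to $e(A)$ on $X$ and has $f_A(p)=1$; in particular $r(f_A)=e(A)$, so $r$ carries the vertex of $K^+_\Delta(X)$ corresponding to $A$ to the vertex $e(A)$ of $K_\Delta(X)$.

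First I would check surjectivity onto $K_\Delta(X)$. For a nonempty finite chain $C\in K(X)^{[n]}$, the affine map $r$ sends $\sum_i\lambda_i f_{C_i}$ to $\sum_i\lambda_i e(C_i)$, so it carries the convex hull of $\{f_{C_1},\dots,f_{C_n}\}$ in $C_b(X_+)$ onto $|C|\subset C_b(X)$, compatibly with barycentric coordinates. Taking the union over all nonempty finite chains $C$ in $K(X)\subset K(X_+)$ shows $r\big(K^+_\Delta(X)\big)=K_\Delta(X)$.

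Next comes injectivity of $r$ on $K^+_\Delta(X)$, which is really the only point that needs attention, since $r$ is far from injective on all of $C_b(X_+)$. Every $g\in K^+_\Delta(X)$ is a convex combination $g=\sum_i\lambda_i f_{C_i}$ of the vertices of a single chain, and each $f_{C_i}$ has value $1$ at $p$; hence $g(p)=\sum_i\lambda_i=1$. Thus every point of $K^+_\Delta(X)$ is determined by its restriction to $X$, so $r|_{K^+_\Delta(X)}$ is injective. Finally, for $g_1,g_2\in K^+_\Delta(X)$ the difference $g_1-g_2$ vanishes at $p$, whence
\[\|g_1-g_2\|_{C_b(X_+)}=\max\Big(\sup_{x\in X}|g_1(x)-g_2(x)|,\;|g_1(p)-g_2(p)|\Big)=\|r(g_1)-r(g_2)\|_{C_b(X)},\]
so $r$ is distance-preserving on $K^+_\Delta(X)$. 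Combining the three steps, $r|_{K^+_\Delta(X)}\colon K^+_\Delta(X)\to K_\Delta(X)$ is the desired isometry.

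The main (indeed essentially only) obstacle is the interplay of surjectivity and injectivity: a priori $r$ could collapse distinct points of $K^+_\Delta(X)$, or fail to hit $K_\Delta(X)$ on the nose. Both worries evaporate once one notes that every point of $K^+_\Delta(X)$ takes the value $1$ at the adjoined point $p$, which forces $r$ to be injective there, while the chain-by-chain identity $r(\text{hull of }\{f_{C_i}\})=|C|$ gives surjectivity. Everything else — linearity of $r$, the identity $d(p,A)=1$, preservation of barycentric coordinates, and the final sup-norm identity — is routine.
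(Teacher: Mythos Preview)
Your proof is correct and follows essentially the same route as the paper: both use the restriction map $r\colon C_b(X_+)\to C_b(X)$ and the key observation that every element of $K^+_\Delta(X)$ takes the value $1$ at the adjoined point $p$, whence $\sup_{x\in X_+}|F(x)-G(x)|=\sup_{x\in X}|F(x)-G(x)|$. Your write-up is somewhat more explicit about surjectivity and injectivity than the paper's terse version, but the underlying argument is identical.
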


\begin{proof}
Let us show that the restriction map $r\:C_b(X_+)\to C_b(X)$ restricts to an isometry between 
$K^+_\Delta(X)$ and $K_\Delta(X)$.
Indeed, for any $F,G\in K_\Delta(X)$ we have $F(x)=\sum_{i=1}^n\lambda_i d(x,A_i)$
and $G(x)=\sum_{j=1}^m\mu_j d(x,B_j)$ for some finite chains $A\:[n]\to K(X)$ and $B\:[m]\to K(X)$,
where each $\lambda_i>0$, each $\mu_j>0$ and $\sum_{i=1}^n\lambda_i=1=\sum_{j=1}^m\mu_j$.
Then $F(p)=1=G(p)$, and consequently $\sup_{x\in X_+}|F(x)-G(x)|=\sup_{x\in X}|F(x)-G(x)|$.
\end{proof}

\begin{lemma} \label{estimates}
For each $n\in\N$, each $\Gamma\in(0,1]$ and each $\eps>0$ there exists a $\delta>0$ such that
the following holds.

Let $A\:[n]\to K(X)\subset K(X_+)$ and $B\:[m]\to K(X)\subset K(X_+)$ be injective monotone maps such that 
each $d(A_i,A_{i+1})\ge\Gamma$ and each $d(B_j,B_{j+1})\ge\Gamma$.
Let $f_i=e(A_i)$ and $g_j=e(B_j)$.
Let $F=\sum_{i=1}^n\lambda_i f_i$ and $G=\sum_{j=1}^m\mu_j g_j$, where $1\ge\lambda_i\ge\Gamma$ and 
$1\ge\mu_j\ge\Gamma$ for each $i$ and $j$.

If $||F-G||\le\delta$, then $m=n$, each $d(A_i,B_i)\le\eps$ and each $|\lambda_i-\mu_i|\le\eps$. 
\end{lemma}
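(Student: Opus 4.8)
The plan is to induct on $n$, at each step stripping the smallest terms $A_1$, $B_1$ off the two chains. The role of $X_+$ is normalization: since every $A_i$ and $B_j$ is contained in $X$, on which $d(p,\cdot)\equiv 1$, we have $f_i(p)=g_j(p)=1$, so $F(p)=\sum_i\lambda_i$ and $G(p)=\sum_j\mu_j$; thus $||F-G||\ge|F(p)-G(p)|$ controls $\big|\sum_i\lambda_i-\sum_j\mu_j\big|$, and more generally any tail $\sum_{i\ge k}\lambda_i f_i$ has norm $\ge(n-k+1)\Gamma$ by evaluation at $p$. I would also decide in advance that the $\delta$ produced for given $(n,\Gamma,\eps)$ is a small multiple of $\Gamma^2$ times the one the induction hypothesis produces for $(n-1,\Gamma,\eps)$, subject to a handful of explicit smallness conditions collected below.

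First I would prove $d(A_1,B_1)\le\delta/\Gamma$. For $x\in A_1$ one has $F(x)=0$ (because $x$ lies in every $A_i$), hence $\mu_1\,d(x,B_1)\le G(x)=|G(x)-F(x)|\le\delta$, so $d(x,B_1)\le\delta/\Gamma$; by symmetry $d(A_1,B_1)\le\delta/\Gamma$, i.e.\ $||f_1-g_1||\le\delta/\Gamma$. (Taking $m=0$ here, so $G=0$, would give $||F||\le\delta<\Gamma\le F(p)$, a contradiction; hence $m\ge1$, and then $m\le(n+\delta)/\Gamma$ is finite.) Next I would bound $|\lambda_1-\mu_1|$ by a fixed multiple of $\delta/\Gamma^2$. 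As $A_2$ is compact, there is $x^{*}\in A_2$ with $d(x^{*},A_1)=d(A_1,A_2)\ge\Gamma$; since $x^{*}\in A_i$ for $i\ge2$, we get $F(x^{*})=\lambda_1\,d(x^{*},A_1)$ exactly, while $G(x^{*})\ge\mu_1 g_1(x^{*})=\mu_1\,d(x^{*},B_1)\ge\mu_1\big(d(x^{*},A_1)-\delta/\Gamma\big)$ by Lemma \ref{Hausdorff}(b). Combined with $|F(x^{*})-G(x^{*})|\le\delta$, this gives $(\mu_1-\lambda_1)\,d(x^{*},A_1)\le\delta+\delta/\Gamma$, hence $\mu_1-\lambda_1\le2\delta/\Gamma^2$; the reverse inequality follows symmetrically from a point $x^{**}\in B_2$ --- but only once $m\ge2$ is known.

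So $m\ge2$ must be dealt with first. If $m=1$, then $||F-\mu_1 f_1||\le||F-G||+\mu_1||g_1-f_1||\le\delta+\delta/\Gamma$, and evaluating $F-\mu_1 f_1=(\lambda_1-\mu_1)f_1+\sum_{i\ge2}\lambda_i f_i$ at $x^{*}$, where all tail terms vanish, bounds $|\lambda_1-\mu_1|$ by a multiple of $\delta/\Gamma^2$; hence $\sum_{i\ge2}\lambda_i f_i$ itself has norm of that order, and since $\sum_{i\ge2}\lambda_i\ge\Gamma$, evaluation at $p$ forces $\Gamma$ to be at most a multiple of $\delta/\Gamma^2$, impossible for small $\delta$. (When $n=1$ the same argument run symmetrically shows instead $m=1$, after which $F$ and $G$ are single terms compared at $p$; this is the base of the induction.) With $||f_1-g_1||\le\delta/\Gamma$ and $|\lambda_1-\mu_1|\le2\delta/\Gamma^2$ secured, I pass to $F'=F-\lambda_1 f_1=\sum_{i=2}^n\lambda_i f_i$ and $G'=G-\mu_1 g_1=\sum_{j=2}^m\mu_j g_j$; the triangle inequality gives $||F'-G'||\le\delta+\delta/\Gamma+2\delta/\Gamma^2$. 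These are data of exactly the same type with $(n,m)$ replaced by $(n-1,m-1)$ --- the separation bounds and the coefficient bounds $\Gamma\le\lambda_i,\mu_j\le1$ pass to the truncated chains --- so the induction hypothesis for $(n-1,\Gamma,\eps)$ gives $m-1=n-1$ and the required estimates on $d(A_i,B_i)$, $|\lambda_i-\mu_i|$ for $i=2,\dots,n$, provided $||F'-G'||$ lies below the relevant threshold. Choosing $\delta$ small enough for this, and so that $\delta/\Gamma\le\eps$, $2\delta/\Gamma^2\le\eps$, and the ``impossible for small $\delta$'' inequality holds, closes the induction.

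The main obstacle is the middle step. A single evaluation point gives only a one-sided comparison of $\lambda_1$ with $\mu_1$: although $F$ collapses to $\lambda_1 f_1$ on $A_2$, there is no reason for $G$ to be small there, so a two-sided bound forces one either to establish $m\ge2$ first (to get a matching point in $B_2$) or to treat $m=1$ by hand, and in both variants the statement ``the tail coefficients are tiny'' becomes a genuine contradiction only through the auxiliary point $p$. The rest --- bookkeeping the constants so that the final $\delta=\delta(n,\Gamma,\eps)$, which degrades by a factor of order $\Gamma^2$ per level and is therefore of order $\Gamma^{2n}$ times an $\eps$-dependent constant, stays positive --- is routine.
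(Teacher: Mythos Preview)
Your proof is correct and follows the same inductive strategy as the paper: bound $d(A_1,B_1)$ from the vanishing of $F$ on $A_1$, bound $|\lambda_1-\mu_1|$ by evaluating at a point of $A_2$ (and symmetrically of $B_2$) where the higher terms vanish, then strip off the leading summands and recurse with error $O(\delta/\Gamma^2)$. The only difference is organizational: where you run a separate contradiction argument to rule out $m=1$ (when $n\ge 2$) before invoking a point of $B_2$, the paper simply lets the auxiliary point $p\in X_+\setminus X$ (which has $g_1(p)=1\ge\Gamma$ and kills all higher terms automatically) play the role of that point whenever $m=1$, absorbing your case analysis into the main estimate and allowing the induction to start cleanly at $n=0$.
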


Lemma \ref{estimates} is not used in the sequel.
However, it is a simplified version of Lemma \ref{estimates2}, whose proof might be easier
to read after that of Lemma \ref{estimates}.

The proof of Lemma \ref{estimates} is in turn an elaboration on that of Lemma \ref{simplicial hyperspace-def}(a).

\begin{proof} We may assume that $\eps\le 1$ and $\eps\le\Gamma$ (by decreasing $\eps$ if needed).
Let $\delta=(\eps/2)^{2n}$.

If $n=0$ but $m>0$, then $||G||\ge\mu_1||g_1||\ge\Gamma$ since $||g_1||=1$. 
Hence $||F||\ge\Gamma-\delta>0$, which is a contradiction.
This establishes the assertion for $n=0$.
Also, if $n>0$, a similar argument shows that $m>0$.  
Arguing by induction, we may assume that the assertion is known if $n$ is replaced by a smaller number.

If $a\in A_1$, then $a$ also lies in each $A_i$, and hence $F(a)=0$.
On the other hand, since each $g_i(a)\ge 0$ and each $\mu_i\ge 0$, we have 
$\mu_1g_1(a)\le G(a)\le F(a)+\delta=\delta$.
Hence $d(a,B_1)=g_1(a)\le\delta/\mu_1\le\delta/\Gamma$.
Similarly, if $b\in B_1$, then $d(A_1,b)\le\delta/\Gamma$.
Hence $d(A_1,B_1)\le\delta/\Gamma$.
Therefore also $||f_1-g_1||\le\delta/\Gamma$.

For each $x\in A_2$ (or for each $x\in X_+$ if $n=1$), $\lambda_1f_1(x)=F(x)\ge G(x)-\delta\ge\mu_1g_1(x)-\delta\ge
\mu_1f_1(x)-\mu_1\delta/\Gamma-\delta$.
Since $\mu_1\le 1$ and $\Gamma\le 1$, we have $\mu_1f_1(x)-\lambda_1f_1(x)\le 2\delta/\Gamma$.
Since $d(A_1,A_2)\ge\Gamma$, there exists an $a\in A_2$ such that $f_1(a)=d(a,A_1)\ge\Gamma$.
(If $n=1$, let $a=p$, the point in $X_+\but X$.)
Then $\mu_1-\lambda_1\le 2\delta/\Gamma f_1(a)\le 2\delta/\Gamma^2$.
Similarly, $\lambda_1-\mu_1\le 2\delta/\Gamma^2$, so $|\lambda_1-\mu_1|\le 2\delta/\Gamma^2$.
Then $||\lambda_1f_1-\mu_1g_1||\le||\lambda_1(f_1-g_1)||+||(\lambda_1-\mu_1)g_1||\le
\delta/\Gamma+2\delta/\Gamma^2\le 3\delta/\Gamma^2$ using that $\lambda_1\le 1$, $||g_1||\le 1$ and $\Gamma\le 1$.

We have $||\sum_{i=2}^n\lambda_i f_i-\sum_{j=2}^m\mu_j g_j||=||F-G+\mu_1g_1-\lambda_1f_1||\le
||F-G||+||\mu_1g_1-\lambda_1f_1||\le\delta+3\delta/\Gamma^2\le 4\delta/\Gamma^2$.
We have $4\delta/\Gamma^2=4(\eps/2)^{2n}/\Gamma^2\le (\eps/2)^{n-1}$.
Then by the induction hypothesis, $m=n$, $d(A_i,B_i)\le\eps$ for each $i\ge 2$ and $|\lambda_i-\mu_i|\le\eps$ 
for each $i\ge 2$.
\end{proof}

\begin{lemma} \label{estimates2}
For each $n\in\N$, each $\Gamma\in(0,1]$ and each $\eps>0$ there exists a $\delta>0$ such that
the following holds.

Let $A\:[n]\to K(X)\subset K(X_+)$ and $B\:[m]\to K(X)\subset K(X_+)$ be injective monotone maps such that 
each $d(A_i,A_{i+1})\ge\Gamma$.
Let $f_i=e(A_i)$ and $g_j=e(B_j)$.
Let $F=\sum_{i=1}^n\lambda_i f_i$ and $G=\sum_{j=1}^m\mu_j g_j$, where $1\ge\lambda_i\ge\Gamma$ for each $i$
and $1\ge\mu_j\ge 0$ for each $j$.

If $||F-G||\le\delta$, then there exist $1=l_0\le k_1\le l_1\le k_2\le\dots\le l_n\le k_{n+1}=m$ 
such that $d(A_i,B_j)\le\eps$ whenever $k_i+1\le j\le l_i$, each $|\lambda_i-\sum_{j=k_i+1}^{l_i}\mu_j|\le\eps$, 
and each $\sum_{j=l_i+1}^{k_i}\mu_j\le\eps$. 
\end{lemma}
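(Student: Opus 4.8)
The plan is to prove the lemma by induction on $n$, elaborating the proof of Lemma~\ref{estimates}. The new feature is that, since $B$ is no longer assumed $\Gamma$-separated, a single term $A_i$ of the $A$-chain will be matched not to one $B_j$ but to a whole consecutive ``good'' block $\{k_i+1,\dots,l_i\}$ of them, preceded by a ``bad'' block whose total $\mu$-mass is negligible. Throughout, $N_\eta(S)$ denotes the $\eta$-neighbourhood of $S$, and one uses repeatedly that the auxiliary point $p\in X_+\but X$ satisfies $f_i(p)=g_j(p)=1$ (because $A_i,B_j\i X$), so that evaluating at $p$ reads off $\sum_i\lambda_i=F(p)$ and $\sum_j\mu_j=G(p)$. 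The base case $n=0$ is immediate: then $F=0$, hence $\|G\|\le\delta$ and $\sum_{j=1}^m\mu_j=G(p)\le\delta\le\eps$, so all of $\{1,\dots,m\}$ may be declared a single bad block; and for $n\ge1$ the same evaluation gives $\Gamma\le\sum_i\lambda_i=F(p)\le G(p)+\delta$, so $G(p)\ge\Gamma-\delta>0$ and $m\ge1$.

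For the inductive step ($n\ge1$) I would fix a threshold $\eta=\eta(\delta)$ with $\eta\to0$ and $\delta/\eta\to0$ as $\delta\to0$ (e.g.\ $\eta=\delta^{1/3}$), let $k_1$ be the largest index with $A_1\not\i N_\eta(B_{k_1})$ (or $k_1=0$ if none), and $l_1$ the largest index with $B_{l_1}\i N_\eta(A_1)$; monotonicity of the two conditions in $j$ makes these well defined, and an elementary estimate at a point of $B_1$ shows $l_1\ge1$. One then verifies: (i) $\sum_{j\le k_1}\mu_j<\delta/\eta$, witnessed by a point $a\in A_1$ with $d(a,B_{k_1})>\eta$, at which $F(a)=0$ while $G(a)\ge\eta\sum_{j\le k_1}\mu_j$; (ii) $k_1<l_1$, so the good block $\{k_1+1,\dots,l_1\}$ is nonempty --- otherwise $B_{k_1+1}\not\i N_\eta(A_1)$ (or $k_1=m$), and a point $b\in B_{k_1+1}$ with $d(b,A_1)>\eta$ (resp.\ the point $p$) has $F(b)\ge\lambda_1 d(b,A_1)>\Gamma\eta$ while $G(b)\le\sum_{j\le k_1}\mu_j<\delta/\eta$, contradicting $\|F-G\|\le\delta$ once $\delta$ is small; (iii) for every $j$ in the good block, $A_1\i N_\eta(B_j)$ and $B_j\i N_\eta(A_1)$, hence $d(A_1,B_j)\le\eta$ and $\|f_1-g_j\|\le\eta$. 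To control $\sigma_1:=\sum_{j=k_1+1}^{l_1}\mu_j$, I would evaluate $F-G$ at a point $a\in A_2$ with $d(a,A_1)\ge\Gamma$, which exists because $d(A_1,A_2)\ge\Gamma$: there $f_i(a)=0$ for $i\ge2$, $|g_j(a)-f_1(a)|\le\eta$ on the good block, and $f_1(a)\ge\Gamma$, so discarding the nonnegative contributions of the remaining $\mu_j$ gives $|\lambda_1-\sigma_1|\le(\delta+\eta)/\Gamma$. Finally, subtracting $\lambda_1 f_1$ from $F$ and the bad block together with the good block from $G$, the triangle inequality combined with (i), (iii) and this last bound yields $\big\|\sum_{i=2}^n\lambda_i f_i-\sum_{j=l_1+1}^m\mu_j g_j\big\|\le\delta'$ with $\delta'\to0$ as $\delta\to0$; since Lemma~\ref{estimates2} does not require the weights to sum to $1$, the inductive hypothesis applies to the subchains $A|_{\{2,\dots,n\}}$ (still $\Gamma$-separated, weights still $\ge\Gamma$) and $B|_{\{l_1+1,\dots,m\}}$, and its output, shifted by $l_1$, supplies $k_2\le l_2\le\dots\le l_n\le k_{n+1}=m$ together with their estimates.

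All the ``$\to0$'' quantities --- $\eta$, $\delta/\eta$, $\delta$ itself, and the induced $\delta'$, whose dependence on $\delta$ is explicit and involves only $\Gamma$ --- can be pushed below the thresholds required in (i)--(iii) and by the inductive hypothesis, by taking the original $\delta=\delta(n,\Gamma,\eps)$ small enough, the choice being defined by the recursion on $n$. I expect the main obstacle to be step (ii): ensuring that every good block is genuinely nonempty --- for otherwise $\sigma_i=0$ and, since $\lambda_i\ge\Gamma$, the required bound $|\lambda_i-\sigma_i|\le\eps$ is unattainable for small $\eps$ --- is exactly the point where both hypotheses $\lambda_i\ge\Gamma$ and $d(A_i,A_{i+1})\ge\Gamma$, as well as the presence of the auxiliary point $p$, are indispensable, and getting the order of the quantifiers and the size of $\eta$ right there is delicate.
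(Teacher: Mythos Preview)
Your overall architecture---induction on $n$, splitting $[m]$ into a negligible ``bad'' block and a ``good'' block matching $A_1$, then peeling off $\lambda_1 f_1$ and the first block of $\mu_j g_j$---is exactly the paper's. The base case and step~(i) are fine, and your argument for (ii) (nonemptiness of the good block, witnessed by $b\in B_{k_1+1}$ or $b=p$) is correct.

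The gap is in your two-sided bound $|\lambda_1-\sigma_1|\le(\delta+\eta)/\Gamma$. Evaluating at $a\in A_2$ gives only the direction $\sigma_1-\lambda_1\le\text{small}$: from $\lambda_1 f_1(a)=F(a)\ge G(a)-\delta\ge\sum_{\text{good}}\mu_j g_j(a)-\delta$ you may indeed discard the remaining nonnegative terms. But for the other direction you need $\lambda_1 f_1(a)\le G(a)+\delta$, and now the tail $\sum_{j>l_1}\mu_j g_j(a)$ sits on the \emph{right}, with the wrong sign to be discarded; since $a\in A_2$ need not lie in any $B_j$ with $j>l_1$, these terms are uncontrolled. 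The paper handles this direction by evaluating instead at a point $b\in B_{l_1+1}$ (or $b=p$), where $g_j(b)=0$ for $j>l_1$ automatically.

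This in turn forces two distinct thresholds. At such a $b$ one only has $f_1(b)\ge\eta$, so the computation yields $(\lambda_1-\sigma_1)\le(\delta/\eta+\sigma_1\eta+\delta)/\eta$, and the middle term $\sigma_1\eta/\eta=\sigma_1$ does not vanish. The paper cures this by taking the bad-block threshold $\alpha$ strictly smaller than the good-block threshold $\beta$ (in fact $\alpha=\beta^2$, $\delta=\alpha^2$): then for $j$ in the good block one has the sharper $g_j\le f_1+\alpha$, and the offending term becomes $\sigma_1\alpha/\beta=\sigma_1\beta\to 0$. Your single threshold $\eta=\delta^{1/3}$ cannot play both roles; you need to decouple them.
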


\begin{proof}
We may assume that $\eps\le 1$ and $\eps\le\Gamma$ (by decreasing $\eps$ if needed).
Let $\beta=(\eps/6)^{\phi(n)}$, where $\phi(0)=1$
and $\phi(n)=4\phi(n-1)+1$.
Let $\alpha=\beta^2$ and $\delta=\alpha^2$.
Thus $\delta=(\eps/6)^{4\phi(n)}$.

If $n=0$, let us consider the point $p$ in $X_+\but X$.
Since $F(p)=0$, we have $\sum_{i=1}^m\mu_i=\sum_{i=1}^m\mu_ig_i(p)=G(p)\le\delta=\eps/6$.
Arguing by induction, we may assume that the assertion is known if $n$ is replaced by a smaller number.

If $n>0$, then $||F||\ge\lambda_1||f_1||\ge\Gamma$ since $||f_1||=1$. 
Hence $||G||\ge\Gamma-\delta>0$, and so $m>0$.

Let $I=\{i\in[m]\mid\exists a_i\in A_1\text{ such that }d(a_i,B_i)\ge\alpha\}$.
If $I\ne\emptyset$ and $k$ is the greatest element of $I$, then there exists an $a\in A_1$ 
such that $d(a,B_k)\ge\alpha$.
Since $B_1\subset\dots\subset B_k$, we also have $d(a,B_i)\ge\alpha$ for all $i\le k$.
Hence $I=\{1,\dots,k\}$ and the same $a_i=a$ works for each $i\in I$.
If $I=\emptyset$, we let $k=0$.

Let $J=\{j\in[m]\mid\exists b_j\in B_j\text{ such that }d(A_1,b_j)\ge\beta\}$.
If $J\ne\emptyset$ and $l+1$ is the least element of $J$, then there exists a $b\in B_{l+1}$
such that $d(A_1,b)\ge\beta$.
Since $B_{l+1}\subset\dots\subset B_m$, we also have $b\in B_j$ for all $j\ge l+1$.
Hence $J=\{l+1,\dots,m\}$ and the same $b_j=b$ works for each $j\in J$.
If $J=\emptyset$, we let $l=m$ and $b=p$ (the point in $X_+\but X$).

Let us note that if $i\ge k+1$, then $d(x,B_i)<\alpha$ for each $x\in A_1$, so by Lemma \ref{Hausdorff}(a),
$g_i(x)-f_1(x)=d(x,B_i)-d(x,A_1)\le\alpha$ for each $x\in X$.
Similarly, if $i\le l$, then $d(x,A_1)<\beta$ for each $x\in B_i$, and so by Lemma \ref{Hausdorff}(a),
$f_1(x)-g_i(x)=d(x,A_1)-d(x,B_i)\le\beta$ for each $x\in X$.
In particular, if $k+1\le i\le l$, then $d(A_1,B_i)\le\max(\alpha,\beta)=\beta<\eps$. 

Let $\kappa=\sum_{i=1}^k\mu_i$.
Since $a\in A_1\subset\dots\subset A_n$, we have $F(a)=0$, so $G(a)\le\delta$.
Since $g_i(a)=d(a,B_i)\ge\alpha$ for each $i\le k$, and each $\mu_ig_i\ge 0$, we have 
$\kappa\alpha\le\sum_{i=1}^k\mu_ig_i(a)\le G(a)\le\delta$.
Hence $\kappa\le\delta/\alpha=\alpha$.

For each $x\in B_{l+1}$ (or for each $x\in X_+$ if $l=m$) we have $G(x)=\sum_{i=1}^l\mu_ig_i(x)$.
Since each $\lambda_if_i\ge 0$, we get 
\[\lambda_1f_1(x)\le F(x)\le G(x)+\delta=\sum_{i=1}^l\mu_ig_i(x)+\delta.\tag{$*$}\]
Since $f_1(b)=d(b,A_1)\ge\beta$, we further get $\sum_{i=1}^l\mu_ig_i(b)\ge\lambda_1\beta-\delta$.
On the other hand, since each $g_i(b)\le 1$, we have $\sum_{i=1}^k\mu_ig_i(b)\le\kappa\le\alpha$.
Since $\alpha+\delta\le 2\alpha=2\beta^2<\Gamma\beta\le\lambda_1\beta$, 
we have $\alpha<\lambda_1\beta-\delta$, and we conclude that $l>k$ (in particular, $l\ge k$ and
$m>0$).

Let $\nu_1=\sum_{i=k+1}^l\mu_i$.
Since $g_i(x)\le f_1(x)+\alpha$ for each $x\in X$ and each $i\ge k+1$, from ($*$) we have 
$\lambda_1f_1(x)\le\sum_{i=1}^l\mu_ig_i(x)+\delta\le\kappa+
\sum_{i=k+1}^l\mu_i\big(f_1(x)+\alpha\big)+\delta\le
\alpha+\nu_1f_1(x)+\nu_1\alpha+\delta$
for each $x\in B_{l+1}$ (or for each $x\in X_+$ if $l=m$).
Since $\nu_1\le 1$, we have $\lambda_1f_1(x)-\nu_1f_1(x)\le\alpha+\nu_1\alpha+\delta\le 3\alpha$.
Since $f_1(b)\ge\beta$, we get $\lambda_1-\nu_1\le 3\alpha/f_1(b)\le 3\alpha/\beta=3\beta$.

For each $x\in A_2$ (or for each $x\in X_+$ if $n=1$) we have $F(x)=\lambda_1f_1$.
Since $g_i(x)\ge f_1(x)-\beta$ for each $i\le l$, we have
$\lambda_1f_1(x)=F(x)\ge G(x)-\delta\ge\sum_{i=k+1}^l\mu_ig_i(x)-\delta\ge
\sum_{i=k+1}^l\mu_i\big(f_1(x)-\beta\big)-\delta=\nu_1f_1(x)-\nu_1\beta-\delta$.
Since $\nu_1\le 1$, we have
$\nu_1f_1(x)-\lambda_1f_1(x)\le\nu_1\beta+\delta\le 2\beta$.
Since $d(A_1,A_2)\ge\Gamma$, there exists an $a'\in A_2$ such that $f_1(a')=d(a',A_1)\ge\Gamma$.
(If $n=1$, let $a'=p$, the point in $X_+\but X$.)
Then $\nu_1-\lambda_1\le 2\beta/f_1(a')\le 2\beta/\Gamma$.

Thus we get $|\lambda_1-\nu_1|\le 3\beta/\Gamma$.
Then $||\lambda_1f_1-\sum_{i=k+1}^l\mu_ig_i||\le
||(\lambda_1-\nu_1)f_1||+||\sum_{i=k+1}^l\mu_i(f_1-g_i)||\le 3\beta/\Gamma+\nu_1\beta\le 4\beta/\Gamma$
using that $||f_1||\le 1$, each $||f_1-g_i||\le\beta$ and $\nu_1\le 1$.
Also $||\sum_{i=1}^k\mu_ig_i||\le\kappa\le\alpha\le\beta/\Gamma$ using that each $||g_i||\le 1$.
Hence $||\lambda_1f_1-\sum_{i=1}^l\mu_ig_i||\le 5\beta/\Gamma$.

We have $||\sum_{i=2}^n\lambda_i f_i-\sum_{j=l+1}^m\mu_j g_j||=||F-G+\sum_{i=1}^l\mu_ig_i-\lambda_1f_1||\le
||F-G||+||\sum_{i=1}^l\mu_ig_i-\lambda_1f_1||\le\delta+5\beta/\Gamma\le 6\beta/\Gamma$.
We have $6\beta/\Gamma\le 6(\eps/6)^{\phi(n)}/\eps=(\eps/6)^{4\phi(n-1)}$.
The assertion now follows from the induction hypothesis.
\end{proof}

\begin{theorem} \label{simplicial-hyperspace} 
The topology of $K_\Delta(X)$ depends only on the topology of $X$.
\end{theorem}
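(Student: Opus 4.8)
The plan is to fix two metrics $d,d'$ on $X$, both of diameter $\le1$ (any admissible metric truncates to such a form without changing its topology), and to prove that the tautological bijection $\Psi\:K_\Delta(X,d)\to K_\Delta(X,d')$, sending $\sum_i\lambda_i e_d(A_i)$ to $\sum_i\lambda_i e_{d'}(A_i)$, is a homeomorphism. First I would record that $\Psi$ is well defined and bijective: by Lemma \ref{simplicial hyperspace-def}(b),(c), for a fixed metric $K_\Delta$ is the body of a simplicial complex whose simplexes $|A|$ are the convex hulls of $\{e(A_1),\dots,e(A_n)\}$ over the injective finite chains $A=(A_1\subsetneq\dots\subsetneq A_n)$ in $K(X)$, any two of them meeting along a common face, and the vertices $e(A_i)$ are affinely independent (so barycentric coordinates are unambiguous), which is immediate from Lemma \ref{simplicial hyperspace-def}(a). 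The underlying abstract complex — the order complex of the poset $K(X)$ — carries no metric, whence $\Psi$; and it is affine on every simplex. (To dispose of chains ending in $X$, and to put the later lemmas in their stated form, I would first replace $X$ by $X_+=X\sqcup\{p\}$ via Lemma \ref{+isometry}, which supplies a restriction isometry $C_b(X_+)\to C_b(X)$ compatible with $d$, $d'$ and $\Psi$; I suppress this passage from the notation below.) Since both $K_\Delta(X,d)$ and $K_\Delta(X,d')$ then sit in one and the same normed space $C_b(X_+)$, which depends on the topology of $X$ alone, it suffices to prove that $\Psi$ is continuous for every such pair $d,d'$ — for then $\Psi^{-1}=\Psi_{d'd}$ is continuous as well. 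Note that affineness on simplexes does not by itself yield continuity, because the subspace topology of $K_\Delta$ is in general strictly finer than the weak topology of the complex.

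The engine of the continuity proof is Lemma \ref{estimates2}, and the point to notice is that every fixed $F\in K_\Delta(X,d)$ already meets its non-degeneracy hypotheses. Writing $F=\sum_{i=1}^n\lambda_i e_d(A_i)$ with all $\lambda_i>0$ over an injective chain $A$, the compacta $A_i$ are pairwise distinct, so $\Gamma:=\min\bigl(\{\lambda_i\}_i\cup\{d(A_i,A_{i+1})\}_i\bigr)>0$, and $F$ is eligible for Lemma \ref{estimates2} with this $\Gamma$. Fix also a target $\eps_0>0$. Since the Hausdorff-metric topology on $K(X)$ is the Vietoris topology and hence depends only on the topology of $X$, the identity $\id\:(K(X),d)\to(K(X),d')$ is continuous, so there is $\eta>0$ such that $d'(A_i,B)\le\eps_0/(4n)$ for every $i$ whenever $d(A_i,B)\le\eta$; shrinking $\eta$ I may also assume $(2n+1)\eta\le\eps_0/4$. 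Let $\delta=\delta(n,\Gamma,\eta)>0$ be the constant of Lemma \ref{estimates2}.

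It then remains to show that $\|F-G\|\le\delta$, for $G\in K_\Delta(X,d)$, forces $\|\Psi(G)-\Psi(F)\|\le\eps_0$. Write $G=\sum_{j=1}^m\mu_j e_d(B_j)$ over an injective chain $B$ with all $\mu_j>0$. Lemma \ref{estimates2} splits $\{1,\dots,m\}$ into consecutive blocks, of which $n$ are ``matching'' blocks $J_1,\dots,J_n$ — with $d(A_i,B_j)\le\eta$ for $j\in J_i$ and $\bigl|\lambda_i-\sum_{j\in J_i}\mu_j\bigr|\le\eta$ — while the remaining (at most $n+1$) blocks have $\mu$-mass $\le\eta$ each. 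Grouping $\Psi(G)=\sum_j\mu_j e_{d'}(B_j)$ accordingly and comparing with $\Psi(F)=\sum_i\lambda_i e_{d'}(A_i)$, the terms over the leftover blocks contribute at most $(n+1)\eta$ in norm (each $\|e_{d'}(B_j)\|\le1$), whereas over the $i$th matching block
\[
\Bigl\|\sum_{j\in J_i}\mu_j e_{d'}(B_j)-\lambda_i e_{d'}(A_i)\Bigr\|\le\sum_{j\in J_i}\mu_j\,d'(A_i,B_j)+\Bigl|\lambda_i-\sum_{j\in J_i}\mu_j\Bigr|\le\frac{\eps_0}{4n}+\eta,
\]
using $\|e_{d'}(B_j)-e_{d'}(A_i)\|=d'(A_i,B_j)$ (the embedding $e_{d'}$ is isometric, Lemma \ref{Hausdorff}(b)), $\|e_{d'}(A_i)\|\le1$, $\sum_{j\in J_i}\mu_j\le1$, and the choice of $\eta$. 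Summing over the $n$ matching blocks and adding the leftover contribution, $\|\Psi(G)-\Psi(F)\|\le(2n+1)\eta+\eps_0/4\le\eps_0/2<\eps_0$. Thus $\Psi$ is continuous at $F$, hence everywhere, and Theorem \ref{simplicial-hyperspace} follows.

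The whole conceptual content is packaged into Lemma \ref{estimates2}, which is already available, so the rest is bookkeeping. The one genuinely delicate point is the remark in the second paragraph that an arbitrary point of $K_\Delta(X,d)$ is automatically non-degenerate; without it one is tempted to track points in arbitrarily thin simplexes, where no estimate of the above kind is available. I expect that, and the (ultimately harmless) passage between the two Hausdorff metrics on $K(X)$, to be where any hidden difficulty would lie.
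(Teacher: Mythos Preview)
Your proof is correct and follows essentially the same route as the paper's: reduce to $K_\Delta^+(X)$ via Lemma~\ref{+isometry}, exploit that the Hausdorff metric induces the Vietoris topology so that the identity on $K(X)$ is continuous between the two metrics, then feed a fixed point's minimal-simplex data $(n,\Gamma)$ into Lemma~\ref{estimates2} and do the triangle-inequality bookkeeping. Your constants differ cosmetically from the paper's choice $\beta=\eps/(3n+1)$, but the structure of the estimate---$n$ matching blocks each contributing two small terms, plus $n+1$ leftover blocks of small mass---is identical.
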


Unfortunately, the uniform structure of $K_\Delta(X)$ does not seem to depend only on the uniform structure 
of $X$ (at least, the estimates obtained below depend on the dimensions of the two convex hulls).

\begin{proof}
By Lemma \ref{+isometry} it suffices to show that the topology of $K^+_\Delta(X)$ depends only on the topology of $X$.
This makes some difference since $||F||=1$ for each $F\in K^+_\Delta(X)$, but e.g.\ $||e(X)||=0$ where 
$e(X)\in K_\Delta(X)$.

It is well-known that the topology induced on $K(X)$ by the Hausdorff metric is the Vietoris topology,
which is independent of the metric \cite{Mich}*{Theorem 3.3}.
So if $S$ denotes the underlying set of the metric space $X=(S,d)$, and $Y=(S,d')$ for some metric $d'$ on $S$
inducing the same topology, then $\id_S\:K(X)\to K(Y)$ is a homeomorphism.
Let us extend the composition 
$t\:e_{X^+}\big(K(X)\big)\xr{e_{X^+}^{-1}}K(X)\xr{\id_S}K(Y)\xr{e_{Y^+}}e_{Y^+}\big(K(Y)\big)$
linearly to a map $T\:K_\Delta^+(X)\to K_\Delta^+(Y)$.
By Lemma \ref{simplicial hyperspace-def}, $T$ is a bijection.
We will show that $T$ is continuous; by symmetry, $T^{-1}$ will then also be continuous.

Given an $x\in K_\Delta^+(X)$, let $|A|$ be the minimal simplex of $K_\Delta^+(X)$ containing $x$; thus 
$A\:[n]\to K(X)$ is a nonempty finite chain of nonempty subsets $A_1\subset\dots\subset A_n\subset X$ 
for some $n=n(x)$.
We have $x=\sum_{i=1}^n\lambda_if_i$, where each $f_i=e(A_i)$ and each $\lambda_i\ge 0$, with $\sum\lambda_i=1$.
By the minimality, $A$ is injective, i.e., each $A_{i+1}\ne A_i$, and also each $\lambda_i>0$.
Then there exists a $\Gamma\in(0,1]$ such that each $d(A_i,A_{i+1})\ge\Gamma$ and each $\lambda_i\ge\Gamma$.

Let $\eps>0$ be given.
We need to show that there exists a $\delta>0$ such that if $y\in K_\Delta^+(X)$ is $\delta$-close to $x$,
then $T(y)$ is $\eps$-close to $T(x)$.
Since $t$ is continuous, there exists an $\alpha>0$ such that $t$ sends the $\alpha$-neighborhood
of each $f_i$ into the $\beta$-neighborhood of $t(f_i)$, where $\beta=\eps/(3n+1)$.
We may assume that $\alpha\le\beta$.
Let $\delta=\delta_{\ref{estimates2}}$ be given by Lemma \ref{estimates2} for 
$\eps_{\ref{estimates}}=\alpha$ and $\Gamma_{\ref{estimates}}=\Gamma$.

Suppose that $y\in K_\Delta^+(X)$ is $\delta$-close to $x$.
Let $|B|$ be the minimal simplex of $K_\Delta^+(X)$ containing $y$; thus $B\:[m]\to K(X)$ is injective, for
some $m=m(y)$, and $y=\sum_{i=1}^m\mu_ig_i$, where each $g_i=e(B_i)$ and each $\mu_i>0$, with 
$\sum_{i=1}^m\mu_i=1$.
Then by Lemma \ref{estimates2} there exist $1=l_0\le k_1\le l_1\le k_2\le\dots\le l_n\le k_{n+1}=m$ 
such that $||f_i-g_j||\le\alpha$ whenever $k_i+1\le j\le l_i$, 
each $|\lambda_i-\sum_{j=k_i+1}^{l_i}\mu_j|\le\alpha\le\beta$, 
and each $\sum_{j=l_i+1}^{k_i}\mu_j\le\alpha\le\beta$.
We have $T(x)=\sum_{i=1}^n\lambda_if_i'$ and $T(y)=\sum_{j=1}^m\mu_jg'_j$, where $f'_i=t(f_i)$ and $g'_j=t(g'_j)$.
By the above, $||f'_i-g'_j||\le\beta$ whenever $k_i+1\le j\le l_i$.
For each $i$, since $||f'_i||\le 1$ and $\sum_{j=k_i+1}^{l_i}\mu_j\le 1$, we have 
$||\lambda_if'_i-\sum_{j=k_i+1}^{l_i}\mu_jg'_j||\le
||\big(\lambda_i-\sum_{j=k_i+1}^{l_i}\mu_j\big)f'_i||+||\sum_{j=k_i+1}^{l_i}\mu_j(f'_i-g'_j)||\le
\beta+\beta$.
Since each $||g'_j||\le 1$, we also have $||\sum_{j=l_i+1}^{k_i}\mu_jg'_j||\le\beta$.
Hence $||T(x)-T(y)||=||\sum_{i=1}^n\lambda_if'_i-\sum_{j=1}^m\mu_jg'_j||\le(3n+1)\beta=\eps$.
\end{proof}

\subsection{The order complex of the hyperspace}

\begin{theorem} \label{topology3}
$K_\Delta(X)$ is homeomorphic to $|K(X)|$.
\end{theorem}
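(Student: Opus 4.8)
The plan is to identify $K_\Delta(X)$ with $|K(X)|$ through the obvious map, after passing to $C_b(X_+)$ via Lemma \ref{+isometry}. Working in $C_b(X_+)$ rather than $C_b(X)$ (which is harmless by Lemma \ref{+isometry}), consider the map $\Psi$ sending a formal sum $\sum_{i=1}^n\lambda_i A_i\in|K(X)|$ to the function $\sum_{i=1}^n\lambda_i\,e_{X_+}(A_i)\in K_\Delta^+(X)$; this passage to $X_+$ is precisely what will make $\Psi$ a bijection, since in $C_b(X)$ itself the value $e(X)=0$ would spoil injectivity, whereas every chain in $K(X)\subset K(X_+)$ has top element $\ne X_+$. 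First I would check that $\Psi$ is a well-defined bijection: well-definedness is immediate (zero-weight terms are invisible to the formula); surjectivity holds because each point of $K_\Delta^+(X)$ lies in the relative interior of a unique face $|C|$ of a necessarily injective chain $C$ (Lemma \ref{simplicial hyperspace-def}(b)); and injectivity is exactly Lemma \ref{simplicial hyperspace-def}(a), whose hypothesis ``$A_n\ne X$'' is automatically satisfied here with the ambient space $X_+$ in place of $X$.

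Continuity of $\Psi$, with respect to the $|K(X)|_{HM}$ topology on the source, is easy. Writing $\phi_x\colon[0,1)\to K(X)$ for the step function attached to $x=\sum\lambda_i A_i$, one has $\Psi(x)(y)=\int_0^1 d_{X_+}\bigl(y,\phi_x(t)\bigr)\,dt$ for every $y\in X_+$, whence for two such points
\[
\|\Psi(x)-\Psi(x')\|=\sup_{y\in X_+}\Bigl|\int_0^1\!\bigl(d_{X_+}(y,\phi_x(t))-d_{X_+}(y,\phi_{x'}(t))\bigr)\,dt\Bigr|\le\int_0^1\! d\bigl(\phi_x(t),\phi_{x'}(t)\bigr)\,dt=L_1(\phi_x,\phi_{x'}),
\]
using $|d(y,A)-d(y,B)|\le d(A,B)$ (Lemma \ref{Hausdorff}(b)) and that the Hausdorff metric on $K(X)$ is unchanged inside $K(X_+)$. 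Thus $\Psi$ is $1$-Lipschitz, hence continuous, and composing with the isometry of Lemma \ref{+isometry} yields a continuous bijection $|K(X)|_{HM}\to K_\Delta(X)$.

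The core of the argument is continuity of $\Psi^{-1}$, for which Lemma \ref{estimates2} is tailor-made. Given $F=\sum_{i=1}^n\lambda_i e(A_i)\in K_\Delta^+(X)$, let $|A|$ be its minimal face, so $A$ is injective and all $\lambda_i>0$, and pick $\Gamma\in(0,1]$ with $d(A_i,A_{i+1})\ge\Gamma$ and $\lambda_i\ge\Gamma$ for all $i$. Given $\eps>0$, set $\eps'=\eps/c(n)$ for a suitable constant $c(n)=O(n^2)$ and let $\delta$ be the constant furnished by Lemma \ref{estimates2} for $n$, $\Gamma$, $\eps'$. If $G=\sum_{j=1}^m\mu_j e(B_j)\in K_\Delta^+(X)$ has minimal face $|B|$ and $\|F-G\|\le\delta$, the lemma groups the chain $B$ into $n$ consecutive blocks, the $i$-th block consisting of sets within $\eps'$ of $A_i$ and carrying total weight within $\eps'$ of $\lambda_i$, with the intervening gaps carrying total weight $\le\eps'$ each. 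Reading this off in terms of the step functions $\phi_F,\phi_G\colon[0,1)\to K(X)$: away from a subset of $[0,1)$ of measure $O(n^2\eps')$ accounting for the gaps and for the cumulative misalignment of the block endpoints (each endpoint displaced by $O(n\eps')$), $\phi_G$ takes a value within $\eps'$ of the value $A_i$ of $\phi_F$, while on the exceptional subset $d(\phi_F,\phi_G)\le 1$ by the diameter bound; summing gives $L_1(\phi_F,\phi_G)\le c(n)\eps'=\eps$, i.e.\ the desired continuity at $F$.

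I expect the only genuinely delicate point to be this last translation from the combinatorial conclusion of Lemma \ref{estimates2} to an honest $L_1$ bound: one must track how the $n$ blocks line up as subintervals of $[0,1)$ and how the $O(\eps')$ errors accumulate across the blocks, which is what pins down $c(n)$. Everything about the metric geometry proper of $K_\Delta(X)$ has already been absorbed into Lemma \ref{estimates2}. Finally, combining the two continuous bijections gives $K_\Delta(X)\cong|K(X)|_{HM}$, and by Theorem \ref{HM=AE} this also equals $|K(X)|_{AE}$, so $K_\Delta(X)$ carries precisely the topology of Theorem~A on $|K(X)|$.
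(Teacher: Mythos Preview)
Your proposal is correct and follows essentially the same route as the paper: reduce to $K_\Delta^+(X)$ via Lemma~\ref{+isometry}, use the direct estimate $\|\Psi(x)-\Psi(x')\|\le L_1(\phi_x,\phi_{x'})$ for one direction (the paper obtains this via a common subdivision rather than your integral formula, but it is the same inequality), and invoke Lemma~\ref{estimates2} for the other direction, translating its block/gap conclusion into a bound on the step functions. The only cosmetic difference is that the paper names the map in the opposite direction and finishes with the $D$-metric rather than $L_1$; your appeal to Theorem~\ref{HM=AE} at the end is not needed since $|K(X)|$ already denotes $|K(X)|_{HM}$.
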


\begin{proof} By Lemma \ref{+isometry}, it suffices to show that $K_\Delta^+(X)$ is homeomorphic to $|K(X)|$.

Given an $x\in K_\Delta^+(X)$ and an $y\in K_\Delta^+(X)$, let $|A|$ and $|B|$ be the minimal simplexes 
of $K_\Delta^+(X)$ containing $x$ and $y$.
Thus $A\:[n]\to K(X)$ and $B\:[m]\to K(X)$ are nonempty finite chains of nonempty subsets 
$A_1\subset\dots\subset A_n\subset X$ and $B_1\subset\dots\subset B_m\subset X$ for some $n$ and $m$,
which are injective by the minimality.
We have $x=\sum_{i=1}^n\lambda_if_i$ and $y=\sum_{i=1}^m\mu_ig_i$, where each $f_i=e(A_i)$, each $g_i=e(B_i)$, 
each $\lambda_i>0$ and each $\mu_i>0$, with $\sum\lambda_i=\sum\mu_i=1$.

We also have the step functions $\phi_x\bydef \phi_{\sum\lambda_i A_i},\,\phi_y\bydef \phi_{\sum\mu_i B_i}\:I\to K(X)$.
Let us prove that the map $\Phi\:K_\Delta^+(X)\to|K(X)|$, defined by $x\mapsto\phi_x$, is continuous.
Let us fix $x$.
Then there exists a $\Gamma\in(0,1]$ such that each $d(A_i,A_{i+1})\ge\Gamma$ and each $\lambda_i\ge\Gamma$.
Let $\eps>0$ and let $\delta=\delta(\eps,n,\Gamma)$ be given by Lemma \ref{estimates2}.
If $y$ is $\delta$-close to $x$, then by Lemma \ref{estimates2} there exist 
$1=l_0\le k_1\le l_1\le k_2\le\dots\le l_n\le k_{n+1}=m$ such that $d(A_i,B_j)\le\eps$ whenever 
$k_i+1\le j\le l_i$, each $|\lambda_i-\sum_{j=k_i+1}^{l_i}\mu_j|\le\eps$, 
and each $\sum_{j=l_i+1}^{k_i}\mu_j\le\eps$.

Using the notation of \cite{M2}*{\S\ref{metr:memefu}},
we have $|\phi_x\nee\eps\phi_y|\subset\bigcup_{i=0}^n [\min(p_i,r_i),\,\max(p_i,q_{i+1})]$, where
$p_i=\sum_{j\le i}\lambda_i$, $q_i=\sum_{j\le k_i}\mu_j$ and $r_i=\sum_{j\le l_i}\mu_j$.
Then $|p_i-r_i|\le 2i\eps\le 2n\eps$ and $|p_i-q_{i+1}|\le (2i+1)\eps\le (2n+1)\eps$ for each $0\le i\le n$.
Hence $\max(p_i,q_{i+1})-\min(p_i,r_i)\le (4n+1)\eps$, and consequently
$\mu\big(|\phi_x\nee\eps\phi_y|\big)\le (4n+1)(n+1)\eps$.
Thus $D(\phi_x,\phi_y)\le\eps+\mu\big(|\phi_x\nee\eps\phi_y|\big)\le \big((4n+1)(n+1)+1\big)\eps$.

It remains to show that $\Phi^{-1}$ is continuous; we will show that it is in fact uniformly continuous.
Indeed, by considering a common subdivision of the two triangulations of $[0,1]$ (one with vertices 
$v_i\bydef \sum_{j\le i}\lambda_i$ and another with vertices $w_i\bydef \sum_{j\le i}\mu_i$), we may assume that 
$m=n$ and each $\lambda_i=\mu_i$.
Then $||x-y||\le\sum_{i=1}^n\lambda_i||f_i-g_i||=\sum_{i=1}^n\lambda_i d(A_i,B_i)=L_1(\phi_x,\phi_y)$.
\end{proof}

\section{Derived limits over a topological poset} \label{mainsection}

\subsection{Partially ordered sheaves}\label{posheaves}

A {\it diagram of spaces} indexed by a topological poset $P$ is a morphism of topological posets
$f\:Q\to P$ (i.e., a continuous monotone map) such that if $f(q)\le p$, then there exists a unique 
$q'\in f^{-1}(p)$ such that $q\le q'$.%
\footnote{In Logic, monotone maps (between Kripke frames or Heyting algebras) satisfying this property 
are known as ``$p$-morphisms''.
In Category Theory, such maps between posets (viewed as categories) are known as ``discrete opfibrations'', 
and are a special case of ``Grothendieck opfibrations''.}
It is not hard to see (cf.\ \cite{M4}*{\S\ref{comb:hatcher-maps}, \S\ref{comb:closed-open-maps}}) that this 
condition is equivalent to saying that $f$ is 
an order-closed map with order-discrete point-inverses (with respect to the Alexandrov topologies on 
$P$ and $Q$).
Note also that the monotonicity of $f$ is equivalent to its order-continuity.
If $f\:Q\to P$ is a diagram of spaces, we have the Hatcher maps $f^p_{p'}\:f_p\to f_{p'}$,
where $f_p=f^{-1}(p)$, defined whenever $p\le p'$, with $f^p_p=\id_{f_p}$ (compare 
\cite{M4}*{\S\ref{comb:hatcher-maps}}).

The {\it homotopy colimit} of the diagram of spaces $f$ is the natural continuous map $\Hocolim f\:|Q|\to |P|$ 
(compare \cite{M4}*{\S\ref{comb:homotopy-colimit}}).

\begin{example} \label{atoms-diagram}
Let $P$ be a topological poset and $A(P)$ its set of atoms, and let $E(P)$ be the topological 
subposet of $A(P)\x P$ consisting of all pairs $(a,p)$ such that $a\le p$ (see details in \S\ref{resolution}).
Then the projection $E(P)\subset A(P)\x P\to P$ is a diagram of spaces.
\end{example}

Let $\C$ be a concrete category over the category of sets (for example, the category of abelian groups). 
A $\C$-valued {\it sheaf-diagram} over a topological poset $P$ is a $\C$-valued sheaf
$\F\:Q\to P$ that is also a diagram of spaces whose Hatcher maps are $\C$-morphisms.
In other words, $\F$ is a $\C$-valued sheaf over the space $P$ along with $\C$-morphisms 
$\F^p_q\:\F_p\to\F_q$ for all pairs $p\le q$ such that each $\F^p_p=\id_P$ and each 
$\F^p_q\F^q_r=\F^p_r$.

A $\C$-valued {\it posheaf} (=partially ordered sheaf) is a $\C$-valued sheaf-diagram $\F\:Q\to P$ 
such that the restriction $\F_\le\:\le_Q\to\le_P$ of $\F\x\F\:Q\x Q\to P\x P$ is a sheaf 
($\C$-valued or equivalently set-valued).
Let us note that since $\F$ is a sheaf, so is $\F\x\F$ and consequently also its restriction over 
any subset of $P\x P$, in particular, over $\le_P$.
Thus $\F_\le$ is a posheaf if and only if $\le_Q$ is open in $(\F\x\F)^{-1}(\le_P)$.

\begin{lemma} \label{posheaf}
Let $\F\:Q\to P$ be a posheaf of sets.

(a) Each $q\in Q$ has a neighborhood $U$ such that $\F(U)$ is a neighborhood of $\F(q)$ in $P$ and $\F$
restricts to a homeomorphism $U\to\F(U)$ that is also an isomorphism of posets.

(b) Let $\pi_1,\pi_2\:\le_P\subset P\x P\to P$ be the projections onto the two factors.
Then the map $f\:\pi_1^*Q\to \pi_2^*Q$ given by $\F^p_{p'}\:\F_p\to\F_{p'}$ over each pair 
$(p\le p')\in\le_P$ is continuous, and hence is a morphism of sheaves $\pi_1^*\F\to\pi_2^*\F$.
\end{lemma}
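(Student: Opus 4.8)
The plan is to prove (a) and (b) in turn, using in both cases the defining property of a posheaf --- that $\le_Q$ is open in $(\F\x\F)^{-1}(\le_P)$ --- together with the fact that $\F\:Q\to P$, being an \'etal\'e space, is a local homeomorphism, hence an open map.

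For (a), I would first use the local homeomorphism property: $q$ has an open neighborhood $U_0$ on which $\F$ restricts to a homeomorphism onto an open subset of $P$. Then I would invoke the posheaf condition at the diagonal point $(q,q)\in\le_Q$: since $\le_Q$ is open in $(\F\x\F)^{-1}(\le_P)$, there are open neighborhoods $V_1,V_2$ of $q$ in $Q$ such that $x_1\le x_2$ whenever $x_1\in V_1$, $x_2\in V_2$ and $\F(x_1)\le\F(x_2)$. Setting $U=U_0\cap V_1\cap V_2$, the restriction $\F|_U$ is monotone because $\F$ is a morphism of topological posets, and it reflects the order by the choice of $V_1,V_2$; since $Q$ is a poset, reflecting the order already forces $\F|_U$ to be injective, so $\F|_U$ is an isomorphism of posets onto $\F(U)$, and it is a homeomorphism onto $\F(U)$ because $U\subset U_0$. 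Finally $\F(U)$ is open, hence a neighborhood of $\F(q)$, because $\F$ is open. This gives (a).

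For (b), the key point is that, $\F$ being a diagram of spaces, the Hatcher map value $\F^p_{p'}(x)$ is characterized as the unique element $y\in\F_{p'}$ with $x\le y$ (for $x\in\F_p$ and $p\le p'$); conversely, any relation $x\le y$ in $Q$ satisfies $y=\F^{\F(x)}_{\F(y)}(x)$. I would prove continuity of $f$ at an arbitrary point $\big((p\le p'),x\big)$ of $\pi_1^*Q$; write $y=\F^p_{p'}(x)$, so $x\le y$ in $Q$ and $f\big((p\le p'),x\big)=\big((p\le p'),y\big)$. A basic neighborhood of this image point has the form $(O\x V)\cap\pi_2^*Q$ with $O$ open in $\le_P$ and $V$ open in $Q$. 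I would shrink $V$ to an open set $V'\ni y$ on which $\F$ is a homeomorphism onto the open set $\F(V')$, and apply the posheaf condition at $(x,y)\in\le_Q$ to get an open $V_x\ni x$ such that $x_1\le y_1$ whenever $x_1\in V_x$, $y_1\in V'$ and $\F(x_1)\le\F(y_1)$. I would then take the neighborhood $W$ of $\big((p\le p'),x\big)$ in $\pi_1^*Q$ consisting of all $\big((p_1\le p_1'),x_1\big)$ with $x_1\in V_x$, $(p_1,p_1')\in O$ and $p_1'\in\F(V')$; this $W$ is open, being the intersection inside $\pi_1^*Q$ of $\le_P\x V_x$, the preimage of $O$, and the preimage of the open subset $\{(p_1,p_1')\in\le_P\mid p_1'\in\F(V')\}$ of $\le_P$. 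For $\big((p_1\le p_1'),x_1\big)\in W$, let $y_1$ be the unique point of $V'$ with $\F(y_1)=p_1'$; then $\F(x_1)=p_1\le p_1'=\F(y_1)$, hence $x_1\le y_1$, hence $y_1=\F^{p_1}_{p_1'}(x_1)$ by uniqueness, so $f(W)\subset(O\x V)\cap\pi_2^*Q$. This proves continuity, and since $f$ commutes with the projections to $\le_P$ and restricts to the $\C$-morphism $\F^p_{p'}$ on the stalk over each $(p,p')$, it is a morphism of sheaves $\pi_1^*\F\to\pi_2^*\F$.

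I expect the only real friction to be the bookkeeping in (b): checking carefully that $W$ is genuinely open in $\pi_1^*Q$, and making sure the uniqueness clause in the definition of a diagram of spaces is applied to the correct element, namely that $\F^{p_1}_{p_1'}(x_1)$ is the unique element of $\F_{p_1'}$ lying above $x_1$. Once the posheaf condition is in hand, everything else is formal.
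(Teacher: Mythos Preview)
Your proof is correct. Part (a) is essentially the paper's argument, only somewhat more direct: the paper phrases the posheaf hypothesis as ``$\F_\le$ is a sheaf'' and extracts from this a local homeomorphism $\le_Q\to\le_P$ near $(q,q)$, which then requires two rounds of shrinking; you instead use the equivalent formulation ``$\le_Q$ is open in $(\F\x\F)^{-1}(\le_P)$'' immediately, which gives the order-reflection property in one step.

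Part (b) is where you and the paper genuinely diverge. The paper observes that $\F_\le\:\le_Q\to\le_P$ factors as $\le_Q\xr{\rho_i}\pi_i^*Q\xr{\pi_i^*\F}\le_P$ for $i=1,2$; since $\F_\le$ and $\pi_i^*\F$ are both sheaves over $\le_P$, each $\rho_i$ is a local homeomorphism (a map over the base between two \'etal\'e spaces is automatically \'etale). The Hatcher-map uniqueness shows $\rho_1$ is a bijection, hence a homeomorphism, and one sets $f=\rho_2\rho_1^{-1}$. This is short and conceptual, and it makes clear that $f$ is not merely continuous but itself a local homeomorphism. Your direct neighborhood argument is longer but entirely self-contained: it avoids the lemma that a map of sheaves over a common base is \'etale, and the bookkeeping you flagged (openness of $W$, correct application of the uniqueness clause) is handled correctly. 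Either route is fine; the paper's buys brevity and a slightly stronger conclusion, yours buys transparency.
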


\begin{proof}[Proof. (a)] Since $\F$ is a sheaf, there exists an open neighborhood $V$ of $q$ in $Q$ 
such that $\F(V)$ is a neighborhood of $\F(q)$ in $P$ and $\F$ restricts to a homeomorphism $V\to\F(V)$.
Since $\F_\le$ is also a sheaf, there exists an open neighborhood $W$ of $(q,q)$ in $\le_Q$ such that 
$\F_\le(W)$ is an open neighborhood of $\F_\le(q,q)$ in $\le_P$ and $\F_\le$ restricts to a 
homeomorphism $W\to\F(W)$.
By the definition of product topology, $W$ contains $W_1\x W_2\cap\le_Q$, where $W_1$ and $W_2$
are open neighborhoods of $q$ in $Q$.
Let $O=V\cap W_1\cap W_2$ and $\le_O=O\x O\cap\le_Q$.
Then $\F(O)$ is a neighborhood of $\F(q)$ in $P$ and $\F$ restricts to a homeomorphism $O\to\F(O)$;
also, $\F_\le(\le_O)$ is an open neighborhood of $\F_\le(q,q)$ in $\le_P$ and $\F_\le$ restricts to 
a homeomorphism $\le_O\to\F_\le(\le_O)$.
By the definition of product topology, $\F_\le(\le_O)$ contains $O_1\x O_2\cap\le_P$, where $O_1$ 
and $O_2$ are open neighborhoods of $\F(q)$ in $P$.
Let $U=O\cap\F^{-1}(O_1\cap O_2)$ and $\le_U=U\x U\cap\le_U$.
Then $\F(U)$ is a neighborhood of $\F(q)$ in $P$ and $\F$ restricts to a homeomorphism $U\to\F(U)$;
also, $\F_\le(\le_U)=\F(U)\x\F(U)\cap\le_P$.
If $p,p'\in\F(U)$ and $p\le p'$, there exist unique $q,q'\in U$ such that $\F(q)=p$ and $\F(q')=p'$,
and since $\F\x\F$ restricts to a homeomorphism between $U\x U\but\le_U$ and $\F(U)\x\F(U)\but\le_P$,
we must have $q\le q'$.
Thus $\F|_U\:U\to\F(U)$ is also an isomorphism of posets.
\end{proof}

\begin{proof}[(b)] Let us note that $\pi_1^*Q$ consists of pairs $(q,p)$, where $q\in Q$, $p\in P$
and $\F(q)\le p$.
Similarly, $\pi_2^*Q$ consists of pairs $(p,q)$, where $q\in Q$, $p\in P$ and $p\le\F(q)$.
Clearly, $\F_\le:\le_Q\to\le_P$ factors through $\pi_i^*Q$ for each $i$.
Since $\F_\le$ is a sheaf and $\pi_i^*\F\:\pi_i^*Q\to\le_P$ is a sheaf, the resulting map
$\rho_i\:\le_Q\to\pi_i^*Q$ is also a sheaf.
In particular, it is open and continuous.
On the other hand, $\rho_1$ is a bijection.
Indeed, $\rho_1(q,q')=(q,p')$, where $p'=\F(q')$, and we have $q'=\F^p_{p'}(q)$, where $p=\F(q)$.
Thus $\rho_1$ is a homeomorphism, and we may define $f$ to be the composition $\rho_2\rho_1^{-1}$.
\end{proof}

We recall that the Leray sheaf $\h^n(\pi)$ of a continuous map $\pi\:E\to B$ is the sheafafication of the presheaf 
$U\mapsto H^n\big(\pi^{-1}(U)\big)$.

\begin{lemma} \label{closed Leray}
If $\pi\:E\to B$ is a closed map, where $E$ is metrizable, then
$\h^n(\pi)_b\simeq H^n\big(\pi^{-1}(b)\big)$ for each $b\in B$.
\end{lemma}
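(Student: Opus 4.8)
The plan is to realize the stalk $\h^n(\pi)_b$ as a filtered colimit of cohomology groups of open neighborhoods of the fibre $\pi^{-1}(b)$, and then to apply the classical tautness of closed subspaces in paracompact spaces. Since $\h^n(\pi)$ is the sheafafication of the presheaf $U\mapsto H^n\big(\pi^{-1}(U)\big)$, its stalk at $b$ is the colimit of the groups $H^n\big(\pi^{-1}(U)\big)$ over the poset of open neighborhoods $U$ of $b$ in $B$, directed by reverse inclusion, with the restriction homomorphisms as bonding maps; and there is a canonical map $\h^n(\pi)_b\to H^n\big(\pi^{-1}(b)\big)$ induced by the restrictions $H^n\big(\pi^{-1}(U)\big)\to H^n\big(\pi^{-1}(b)\big)$. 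The goal is to show this map is an isomorphism.

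First I would dispose of the degenerate case $b\notin\pi(E)$. As $\pi$ is closed, $\pi(E)$ is closed in $B$, so $B\but\pi(E)$ is an open neighborhood of $b$ with empty preimage, whence $\h^n(\pi)_b=H^n(\emptyset)=0=H^n\big(\pi^{-1}(b)\big)$. So assume $b\in\pi(E)$; then I would observe that $F:=\pi^{-1}(b)$ is closed in $E$. Indeed, picking any $e\in F$, the set $\{e\}$ is closed since $E$ is metrizable, hence $T_1$; as $\pi$ is closed, $\{b\}=\pi(\{e\})$ is closed in $B$; and therefore $F$ is closed by continuity of $\pi$.

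The crux is the following cofinality observation: the family $\{\pi^{-1}(U)\mid U\ni b\text{ open in }B\}$ is cofinal in the family of all open neighborhoods of $F$ in $E$. Given an open $V\supseteq F$, put $U:=B\but\pi(E\but V)$; this is open because $\pi$ is closed, it contains $b$ because $F\i V$ gives $\pi^{-1}(b)\cap(E\but V)=\emptyset$, i.e.\ $b\notin\pi(E\but V)$, and it satisfies $\pi^{-1}(U)\i V$, since $\pi(x)\in U$ forces $x\notin E\but V$. Consequently the reindexing $U\mapsto\pi^{-1}(U)$ is a cofinal monotone map between these directed posets, compatible with the restriction maps, so it induces an isomorphism $\h^n(\pi)_b\simeq\colim_V H^n(V)$, the colimit now taken over all open neighborhoods $V$ of $F$ in $E$.

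Finally, I would invoke tautness: $E$ is metrizable, hence paracompact, and $F$ is closed in $E$, so the restriction maps induce an isomorphism $\colim_V H^n(V)\simeq H^n(F)$ --- this is the classical statement that a closed subspace of a paracompact space is taut for \v Cech cohomology (equivalently, for sheaf cohomology with constant coefficients). Composing the two isomorphisms yields $\h^n(\pi)_b\simeq H^n\big(\pi^{-1}(b)\big)$, and by construction this is exactly the canonical map described above. The one genuinely non-formal ingredient is the tautness statement; everything else is point-set bookkeeping driven by closedness of $\pi$, with metrizability entering only to ensure that $E$ is paracompact and $T_1$.
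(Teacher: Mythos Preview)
Your proof is correct and follows essentially the same route as the paper's: use closedness of $\pi$ to show that the sets $\pi^{-1}(U)$ are cofinal among open neighborhoods of the fibre (via the same formula $U=B\setminus\pi(E\setminus V)$), and then apply tautness (the paper cites Spanier's Theorem 6.6.3). Your version is slightly more explicit, handling the empty-fibre case and verifying that $\pi^{-1}(b)$ is closed before invoking tautness, but the substance is identical.
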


This result is well-known (see \cite{Br}*{Proposition IV.4.2 and Remark 2 to Theorem II.10.6}).
We include a self-contained proof for convenience.

\begin{proof} Since $\pi$ is closed, every open neighborhood $V$ of $\pi^{-1}(b)$ in $E$ contains 
one of the form $\pi^{-1}(U)$, where $U$ is an open neighborhood of $b$ in $B$; 
namely, $U=B\but\pi(E\but V)$.
Hence the group $\h^n(\pi)_b=\colim_U H^n\big(\pi^{-1}(U)\big)$, where $U$ runs over all open neighborhoods of $b$ 
in $B$, is isomorphic to $\colim_V H^n(V)$, where $V$ runs over all open neighborhoods of $\pi^{-1}(b)$ in $E$.
By Spanier's tautness theorem (see \cite{Sp}*{Theorem 6.6.3}), the latter group is isomorphic to
$H^n\big(\pi^{-1}(b)\big)$.
\end{proof}

\begin{theorem} \label{hyperspace-posheaf}
Let $X$ be a metrizable space and let $E(X)$ be the subspace of $X\x K(X)$ consisting of all pairs 
$(x,K)$ such that $x\in K$.
Let $\pi$ be the composition of the inclusion $E(X)\subset X\x K(X)$ and the projection $X\x K(X)\to K(X)$.
Then

(a) $\pi$ is a closed map, and $\h^n(\pi)_A\simeq H^n(A)$ for each compact $A\subset X$;

(b) $\h^n(\pi)$ is a posheaf with respect to Hatcher maps $\h^n(\pi)^B_A\:\h^n(\pi)_B\to\h^n(\pi)_A$, 
$A\subset B$, defined as the restriction (=inclusion induced) maps $H^n(B)\to H^n(A)$.
\end{theorem}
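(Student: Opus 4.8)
The plan is to get (a) directly from Lemma~\ref{closed Leray} and then bootstrap (b) from it. For (a): the projection $X\times K(X)\to X$ carries $\pi^{-1}(A)=\{(x,A):x\in A\}$ homeomorphically onto $A$, so $H^n\big(\pi^{-1}(A)\big)\cong H^n(A)$; moreover $E(X)$, being a subspace of the metrizable space $X\times K(X)$, is metrizable. So it suffices to check that $\pi$ is closed, which I would do sequentially (enough since $K(X)$ is metrizable): if $C\subset E(X)$ is closed, $A_j\to A$ in $K(X)$ and each $A_j\in\pi(C)$, pick $x_j\in A_j$ with $(x_j,A_j)\in C$; since $d(x_j,A)\le d(A_j,A)\to 0$ and $A$ is compact, a subsequence $x_{j_k}\to x\in A$, so $(x_{j_k},A_{j_k})\to(x,A)\in C$ and $A=\pi(x,A)\in\pi(C)$. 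Then Lemma~\ref{closed Leray} yields $\h^n(\pi)_A\cong H^n\big(\pi^{-1}(A)\big)\cong H^n(A)$.

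For (b), recall from \S\ref{resolution} that $E(X)$ is the subposet of $X\times K(X)$ with $(x,A)\le(x',A')$ iff $x=x'$ and $A\subset A'$, so that $\pi$ is the diagram of spaces of Example~\ref{atoms-diagram}, with Hatcher maps the inclusions $\pi^{-1}(A)\hookrightarrow\pi^{-1}(B)$, $A\subset B$. Applying $H^n$, I would order the \'etal\'e space $Q$ of $\h^n(\pi)$ --- points $(A,s)$ with $s\in\h^n(\pi)_A\cong H^n(A)$ --- by $(A,s)\le(B,t)$ iff $B\subset A$ and $t=s|_B$. Then $\h^n(\pi)\colon Q\to K(X)^*$ ($K(X)^*$ being the dual poset, in which $A\le B$ means $B\subset A$) is monotone and continuous and $(A,s)$ lifts uniquely over $C\subset A$ to $(C,s|_C)$, so it is a diagram of spaces whose Hatcher maps $H^n(B)\to H^n(A)$, $A\subset B$, are the restriction maps; these being group homomorphisms, $\h^n(\pi)$ is a sheaf-diagram. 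By the criterion of \S\ref{posheaves} it then remains to show that $\le_Q$ is open in $(\h^n(\pi)\times\h^n(\pi))^{-1}(\le_{K(X)^*})=\{((A,s),(B,t)):B\subset A\}$.

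For this I would pass to the order relation $R=\le_{K(X)^*}=\{(A,B):B\subset A\}$ with its projections $p_1,p_2\colon R\to K(X)$ and form the pulled-back families $p_1^*E(X),p_2^*E(X)$ over $R$, whose fibres over $(A,B)$ are $\pi^{-1}(A)\cong A$ and $\pi^{-1}(B)\cong B$. The sequential argument of (a) again shows $p_1^*\pi$ and $p_2^*\pi$ are closed with metrizable total spaces, so Lemma~\ref{closed Leray} makes the canonical base-change maps $p_i^*\h^n(\pi)\to\h^n(p_i^*\pi)$ isomorphisms of sheaves (they are stalkwise the identity of $H^n$ of the fibre). The fibrewise inclusions $\pi^{-1}(B)\subset\pi^{-1}(A)$ give a continuous map $j\colon p_2^*E(X)\to p_1^*E(X)$ over $R$, hence a morphism $\h^n(p_1^*\pi)\to\h^n(p_2^*\pi)$, and composing with the base-change isomorphisms produces a sheaf morphism $\phi\colon p_1^*\h^n(\pi)\to p_2^*\h^n(\pi)$ over $R$ equal to the restriction $H^n(A)\to H^n(B)$ on the stalk over $(A,B)$. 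Since $p_1^*Q\times_R p_2^*Q$ is canonically homeomorphic to $\{((A,s),(B,t)):B\subset A\}$ and $\le_Q$ corresponds under this homeomorphism to the graph of $\phi$, and since $\phi$ is continuous with $p_2^*Q\to R$ a local homeomorphism, that graph is open, proving (b). The step I expect to be the main obstacle is precisely the base-change identification $\h^n(p_i^*\pi)\cong p_i^*\h^n(\pi)$: it is what upgrades the ``universal restriction'' from a fibrewise-defined family of maps to an honest morphism of sheaves, and establishing it requires a short diagram chase with the Spanier tautness isomorphisms from the proof of Lemma~\ref{closed Leray} to see that the canonical comparison map is the identity on stalks.
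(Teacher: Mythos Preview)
Your proof of (a) is essentially the paper's: a sequential compactness argument to show $\pi$ is closed, followed by Lemma~\ref{closed Leray}. The paper phrases the compactness step via finite subcovers rather than the inequality $d(x_j,A)\le d_H(A_j,A)$, but the content is identical.

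For (b) your argument is correct but proceeds differently from the paper. The paper works pointwise: given $(B,\beta)\ge(A,\alpha)$ in the \'etal\'e space $Q$, it uses Spanier tautness to lift $\beta$ to some $\gamma\in H^n(V)$ on an open $V\supset B$, observes that $W:=K(V)$ is open in $K(X)$, and writes down the explicit section $s\colon W\to Q$, $C\mapsto\gamma|_C$. Then $s\times s$ restricted to $\le_W$ lands in $\le_Q$ and is an open subset of $(\F\times\F)^{-1}(\le)$, so $\le_Q$ is open. This is concrete and has the side benefit that the section $s$ is reused verbatim in the proof of Theorem~\ref{Leray}(b).

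Your route is more categorical: you construct the sheaf morphism $\phi\colon p_1^*\h^n(\pi)\to p_2^*\h^n(\pi)$ over $R=\le_{K(X)^*}$ directly from the geometry (via the base-change isomorphisms $p_i^*\h^n(\pi)\cong\h^n(p_i^*\pi)$ and the fibrewise inclusion $j$), and then identify $\le_Q$ with the graph of $\phi$, which is open because $\phi$ is continuous and $p_2^*Q\to R$ is a local homeomorphism. In effect you are running Lemma~\ref{posheaf}(b) backwards: rather than deducing the existence of a continuous $\phi$ from the posheaf property, you build $\phi$ first and read off the posheaf property from it. The base-change step you flag as the obstacle is indeed the crux, but it goes through exactly as you indicate: both $p_i^*\pi$ are closed by the same sequential argument as in (a), so Lemma~\ref{closed Leray} identifies stalks on both sides with $H^n$ of the fibre, and the naturality square with the tautness isomorphisms commutes because restriction to the fibre factors through the pullback map. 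What your approach buys is a cleaner structural picture that would transport to other diagrams of spaces with closed projections; what the paper's approach buys is the explicit local section over $K(V)$, which it needs again later.
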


Let us note that $K(X)$ is ordered by {\it reverse} inclusion in (b).

\begin{proof}[Proof. (a)] Suppose that $F\subset E(X)$ is a closed subset such that $\pi(F)$ is not closed.
Then there exists a sequence of points $A_n\in\pi(F)$ converging to a point $A\notin\pi(F)$.
Let us pick any points $x_n\in A_n$ such that $(x_n,A_n)\in F$.
Suppose that every $p\in A$ has an open neighborhood $U_p$ containing only finitely many of the $x_i$.
Since $A$ is compact, there exist finitely many points $p_1,\dots,p_r\in A$ such that 
$A\subset U\bydef U_{p_1}\cup\dots\cup U_{p_r}$.
Then $U$ contains only finitely many of the $x_i$.
On the other hand, since $A$ is compact and $U$ is open, there exists a $q\in\N$ such that $A_n\subset U$ 
for all $n\ge q$.
Hence $U$ contains $x_n$ for all $n\ge q$, which is a contradiction.
Thus our assumption was wrong, and some $x\in A$ is a cluster point of the sequence $x_i$.
Then $x$ is the limit of some subsequence $x_{n_i}$.
Hence $(x,A)$ is the limit of the sequence $(x_{n_i},A_{n_i})$. 
Since each $(x_i,A_i)\in F$ and $F$ is closed, we get that $(x,A)\in F$.
Hence $A\in\pi(F)$, which is a contradiction.

Thus $\pi$ is closed.
The second assertion follows from Lemma \ref{closed Leray} since $\pi^{-1}(A)$ is homeomorphic to $A$ for
each $A\in K(X)$.
\end{proof}

\begin{proof}[(b)]
Let us write $\h^n(\pi)$ as $\F\:E^n(X)\to K(X)$.
Let $A$ and $B$ be compact subsets of $X$ with $A\subset B$.
Let $\alpha\in\F_A=H^n(A)$ and $\beta\in\F_B=H^n(B)$ be such that $\F^B_A(\beta)=\alpha$,
or in other words, $\beta|_A=\alpha$.
Thus $B\le A$ in $K(X)$ and $\beta\le\alpha$ in $E^n(X)$.
By Spanier's tautness theorem (see \cite{Sp}*{Theorem 6.6.3}), $H^n(B)\simeq\lim_V H^n(V)$ over all open 
neighborhoods $V$ of $B$.
Hence there exists an open neighborhood $V$ of $B$ and a $\gamma\in H^n(V)$ such that $\gamma|_B=\beta$.
Since the Hausdorff metric induces the Vietoris topology on $K(X)$, the subset $W\bydef \{C\in K(X)\mid C\subset V\}$ 
of $K(X)$ is open in $K(X)$.
Actually, $W$ can be identified with $K(V)$, and we have $A,B\in W$.
Also, $\pi^{-1}(W)$ is an open subset of $E(X)$ that can be identified with $E(V)$.
Let $p$ be the composition $E(V)\subset V\x K(V)\to V$ of the inclusion and the projection, and let
$\sigma=p^*\gamma\in H^n\big(E(V)\big)$.
For each $C\in K(V)$, the composition $C=\pi^{-1}(C)\subset E(V)\xr{p} V$ coincides with
the inclusion map $C\to V$.
Hence $\sigma|_{\pi^{-1}(C)}=\gamma|_C$.
On the other hand, by Spanier's tautness theorem $\sigma|_{\pi^{-1}(C)}$ coincides with the image of 
$\sigma\in H^n\big(\pi^{-1}(W)\big)$ in $\F_C=\colim_U H^n\big(\pi^{-1}(U)\big)$, where $U$ runs over all open 
neighborhoods of $C$ in $K(X)$.
Since $\F$ is the sheafafication of the presheaf $U\mapsto H^n\big(\pi^{-1}(U)\big)$ and 
$\sigma\in H^n\big(\pi^{-1}(W)\big)$, there is a section $s\:W\to E^n(X)$ of $\F$ over $W$ given by 
$C\mapsto\gamma|_C\in\F_C$.

Let $s_\le$ be the restriction of $s\x s\:W\x W\to E^n(X)\x E^n(X)$ to $\le_W$.
Then $s_\le(\le_W)$ consists of all tuples $(B',A',\gamma|_{B'},\gamma|_{A'})$ such that $A'\subset B'$,
and in particular it contains $(B,A,\beta,\alpha)$.
But such a tuple $(B',A',\gamma|_{B'},\gamma|_{A'})$ satisfies $\gamma|_{A'}=\big(\gamma|_{B'}\big)|_{A'}$ and 
hence belongs to $\le_{E^n(X)}$. 
Thus $s_\le(\le_W)\subset \le_{E^n(X)}$.
On the other hand, $s_\le(\le_W)$ is open in $(\F\x\F)^{-1}(\le_{K(X)})$ since $s_\le$ is a section of 
the sheaf $\F\x\F|_{(\F\x\F)^{-1}(\le_{K(X)})}$ over $\le_W$.
Since $(B,A,\beta,\alpha)$ could be an arbitrary point of $\le_{E^n(X)}$, we conclude that $\le_{E^n(X)}$ is open in 
$(\F\x\F)^{-1}(\le_{K(X)})$.
\end{proof}

\subsection{Continuous derived limits}\label{holim-definition}

If $X$ is a topological poset, let $[X]$ denote the topological space with the same underlying set as $X$ and 
with $U$ open in $[X]$ if and only if it is open both in $X$ and in the Alexandrov topology corresponding to 
the order on $X$.
If $f\:X\to Y$ is a continuous monotone map between topological posets, it is continuous also as a map
$[f]\:[X]\to [Y]$.

Let $P$ be a pospace, and let $[n]$ denote the $n$-element poset $\{1,\dots,n\}$ with 
the usual (total) order.
A chain of length $n$ (without repeats) $p_1<\dots<p_n$ in $P$ may be regarded as 
an injective monotone map $[n]\to P$.
All such chains form a subspace $P^{[n]}$ of the product $P^n=P\x\dots\x P$ of spaces.
Let $\rho_n\:P^{[n]}\subset P^n\to P$ be the projection onto the last factor.
Let $P'$ be the topological poset $\bigsqcup_{n=1}^\infty P^{[n]}$ of all finite chains in $P$, ordered by inclusion,
with the topology of disjoint union.
The map $\rho\:P'\to P$ defined by $\rho(p_1<\dots<p_n)=p_n$ is continuous (since each $\rho_n$ is continuous) 
and, clearly, monotone.

Let $P^\Delta=[P']$.
Thus $U$ is open in $P^\Delta$ if and only if $U$ meets each $P^{[n]}$ in an open set, and
$c\in U$ implies $d\in U$ whenever $c$ is a subchain of $d$.
Let $\kappa\:|P|\to P^\Delta$ be defined by sending the interior of every simplex to that same simplex regarded as 
a point of $P'$. 

\begin{lemma} \label{combinatorialization}
If the topology of $P$ is Hausdorff, then $\kappa$ is continuous.
\end{lemma}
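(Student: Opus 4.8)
The plan is to work with the model $|P|_{HM}$ for $|P|$, so that a point $x\in|P|$ is recorded by its step function $\phi_x\colon[0,1)\to P$ and the topology is that of convergence in measure, and to first exhibit a convenient base for the topology of $P^\Delta=[P']$. For a ``box'' $B=(W_1\x\dots\x W_n)\cap P^{[n]}$, with each $W_i$ open in $P$, set $\langle B\rangle=\{c\in P'\mid c\text{ has a subchain lying in }B\}$. I would first check that the $\langle B\rangle$ are open in $P^\Delta$ and form a base: $\langle B\rangle$ is closed under passing to super-chains, and $\langle B\rangle\cap P^{[k]}$ is the union of the preimages of $B$ under the continuous coordinate projections $P^{[k]}\to P^{[n]}$, hence open, so $\langle B\rangle$ is open in $P^\Delta$; and any $U$ open in $P^\Delta$ is the union of the $\langle B\rangle$ it contains, because $U\cap P^{[m]}$ is open in $P^{[m]}\i P^m$ and $U$ is up-closed. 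Since $\kappa(x)$ is precisely the support chain $\supp(x)$ of $x$ viewed as a point of $P'$, it then suffices to prove that each $\kappa^{-1}(\langle B\rangle)=\{x\in|P|\mid\supp(x)\text{ has a subchain in }B\}$ is a neighbourhood of each of its points.

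Fix such a point $x_0$, and choose support values $p_1<\dots<p_n$ of $x_0$ with $p_i\in W_i$ for each $i$. Here is where Hausdorffness enters: since the $p_i$ are pairwise distinct, I would shrink the $W_i$ so that they become pairwise disjoint, which only shrinks $B$ and is therefore harmless. Let $\eps>0$ be smaller than the $\phi_{x_0}$-mass of each $p_i$ (i.e.\ than each of the corresponding barycentric coordinates of $x_0$), and let $V$ be the intersection over $i=1,\dots,n$ of the subbasic neighbourhoods $\{x\mid\mu(\{t\in\phi_{x_0}^{-1}(p_i)\mid\phi_x(t)\notin W_i\})<\eps\}$ of $x_0$; these are neighbourhoods of $x_0$ in $|P|_{HM}$ by definition of convergence in measure, so $V$ is one too.

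It remains to check $V\i\kappa^{-1}(\langle B\rangle)$. Given $x\in V$: for each $i$ the set $\phi_{x_0}^{-1}(p_i)$ is a single interval of length $\ge\eps$, hence meets $\{t\mid\phi_x(t)\in W_i\}$, so there is $t_i\in\phi_{x_0}^{-1}(p_i)$ with $\phi_x(t_i)\in W_i$. Because $p_1<\dots<p_n$, these intervals occur in this order along $[0,1)$, so $t_1<\dots<t_n$; since $\phi_x$ is monotone in $t$ we get $\phi_x(t_1)\le\dots\le\phi_x(t_n)$, and disjointness of the $W_i$ upgrades this to $\phi_x(t_1)<\dots<\phi_x(t_n)$. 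Each $\phi_x(t_i)$ is a support value of $x$ lying in $W_i$, so $(\phi_x(t_1),\dots,\phi_x(t_n))$ is a subchain of $\supp(x)$ lying in $B$, whence $x\in\kappa^{-1}(\langle B\rangle)$. Thus $V$ is a neighbourhood of $x_0$ contained in $\kappa^{-1}(\langle B\rangle)$, so the latter is open, and $\kappa$ is continuous.

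The step I expect to be the main obstacle — and the only place where Hausdorffness is genuinely used — is the passage from ``$\phi_x$ agrees with $\phi_{x_0}$ in measure near each $p_i$'' to an honest length-$n$ subchain of $\supp(x)$: a priori the witnesses $\phi_x(t_i)$ satisfy only $\le$, and without first separating the $p_i$ by disjoint open sets one cannot exclude a collapse $\phi_x(t_i)=\phi_x(t_{i+1})$, which would leave too short a subchain and break the argument. The remaining points — that the $\langle B\rangle$ form a base of $P^\Delta$, and the elementary manipulations with step functions — are routine.
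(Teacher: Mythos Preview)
Your proof is correct and follows essentially the same route as the paper's: both identify the ``upward closures of boxes'' $\langle B\rangle$ (the paper calls them $V_x$) as the relevant open sets in $P^\Delta$, both use Hausdorffness to make the box factors $W_i$ pairwise disjoint, and both then argue that $\kappa^{-1}\langle B\rangle$ is open in $|P|_{HM}$. The only difference is that the paper outsources this last step to an external lemma (\cite{M2}*{Lemma \ref{metr:open-in-HM}}), whereas you supply the step-function argument directly; your version is self-contained and makes the role of disjointness of the $W_i$ (upgrading $\le$ to $<$ among the witnesses $\phi_x(t_i)$) explicit.
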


\begin{proof} Let $U$ be an open subset of $P^\Delta$ and let $x\in U$.
Then $x=(x_1<\dots<x_n)\in P^{[n]}\subset P^n$ for some $n$.
By the definition of product topology there exist neighborhoods $U_1,\dots,U_n$ of $x_1,\dots,x_n$ such that 
$(U_1\x\dots\x U_n)\cap P^{[n]}\subset U$.
Since $P$ is Hausdorff, we may assume that $U_1,\dots,U_n$ are pairwise disjoint.
Let $V_x$ be the set of all nonempty finite chains in $P$ that contain a subchain $y_1<\dots<y_n$ such that
each $y_i\in U_i$.
Since $(U_1\x\dots\x U_n)\cap P^{[n]}\subset U$ and $U$ is open in $P^\Delta$, we have $V_x\subset U$.
On the other hand, by \cite{M2}*{Lemma \ref{metr:open-in-HM}} (using that $P$ is $T_1$), $\kappa^{-1}(V_x)$ 
is open in $|P|$.
Hence also $\kappa^{-1}(U)=\bigcup_{x\in U}\kappa^{-1}(V_x)$ is open in $|P|$.
\end{proof}

Let $\F\:E\to P$ be a sheaf-diagram.
Let us consider the sheaf-diagram $\rho^*\F\:\rho^*E\to P'$.
Thus $\rho^*E$ consists of all tuples $(p_1<\dots<p_n;\,g)$, where $(p_1<\dots<p_n)\in P'$ and $g\in\F_{p_n}$, 
and has the topology of pullback of continuous maps and the order of pullback of monotone maps (namely, of the maps
$\F\:E\to P$ and $\rho\:P'\to P$).
Namely, $\rho^*E$ is homeomorphic to $\bigsqcup_{n=1}^\infty\rho_n^*E$ and is ordered by 
$(p_1<\dots<p_n;\,g)\le (q_1<\dots<q_m;\,h)$ if and only if $(p_1<\dots<p_n)$ is a subchain of $(q_1<\dots<q_m)$ 
and $\F^{p_n}_{q_m}(g)=h$.

Let $E^\Delta_\F=[\rho^*E]$ and let $\F^\Delta=[\rho^*\F]\:E^\Delta_\F\to P^\Delta$.
Since $\rho^*\F$ is monotone and continuous, $\F^\Delta$ is continuous.

Also $\F^\Delta$ is an open map.
Indeed, since $\rho^*\F$ is a sheaf, it is an open map.
Also $\rho^*\F$ is open with respect to the Alexandrov topologies on $\rho^*E$ and $P'$.
Indeed, if $U$ is open in the Alexandrov topology on $\rho^*E$, and some $(p_1<\dots<p_n)\in\rho^*\F(U)$
is a subchain of $(q_1<\dots<q_m)$, then $(q_1<\dots<q_n)\in\rho^*\F(U)$ since any $(p_1<\dots<p_n;\,g)\in U$ 
gives rise to $\big(q_1<\dots<q_m;\,\F^{p_n}_{q_m}(g)\big)\in U$.

\begin{lemma} \label{holim-sheaf}
If $\F$ is a posheaf, then $\F^\Delta$ is a sheaf.
\end{lemma}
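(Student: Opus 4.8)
The plan is to deduce that $\F^\Delta$ is a sheaf, i.e.\ a local homeomorphism, from local injectivity. We already know that $\F^\Delta$ is continuous and open, so it suffices to show that every point of $E^\Delta_\F$ has an open neighborhood on which $\F^\Delta$ is injective: the restriction of $\F^\Delta$ to such a neighborhood is then a continuous open injection onto an open subset of $P^\Delta$, hence a homeomorphism onto that open set, so that $\F^\Delta$ is a local homeomorphism (an \'etal\'e space, i.e.\ a sheaf).

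Fix a point $e=(p_1<\dots<p_n;\,g)\in\rho^*E$, so $g\in E$ with $\F(g)=p_n$. Apply Lemma~\ref{posheaf}(a) to $g$ to obtain an open set $U\ni g$ in $E$ such that $\F(U)$ is an open neighborhood of $p_n$ in $P$ and $\F|_U\:U\to\F(U)$ is simultaneously a homeomorphism and an isomorphism of posets. I would take
\[N=\big\{(q_1<\dots<q_m;\,h)\in\rho^*E\ \big|\ \text{there is }k\le m\text{ with }q_k\in\F(U)\text{ and }\F^{q_k}_{q_m}\big((\F|_U)^{-1}(q_k)\big)=h\big\}.\]
Here $q_k\le q_m$ since the $q_i$ form a chain, so the Hatcher map $\F^{q_k}_{q_m}$ is defined; and $e\in N$, taking $k=n$, because $(\F|_U)^{-1}(p_n)=g$ and $\F^{p_n}_{p_n}=\id$. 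It remains to check that $N$ is open in $\rho^*E$, that $N$ is an up-set for the order on $\rho^*E$ (hence open in the Alexandrov topology, so that $N$ is open in $E^\Delta_\F=[\rho^*E]$), and that $\F^\Delta|_N$ is injective.

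For openness in $\rho^*E=\bigsqcup_m\rho_m^*E$ it is enough that each $N\cap\rho_m^*E$ be open in $\rho_m^*E$, and $N\cap\rho_m^*E$ is the union over $k\le m$ of the sets $A_k=\{(q_1<\dots<q_m;h)\in\rho_m^*E\mid q_k\in\F(U),\ \F^{q_k}_{q_m}((\F|_U)^{-1}(q_k))=h\}$. On the open subset $C_k=\{(q_1<\dots<q_m;h)\in\rho_m^*E\mid q_k\in\F(U)\}$ consider the two maps $C_k\to E$ sending $(q_1<\dots<q_m;h)$ to $h$ and to $\F^{q_k}_{q_m}\big((\F|_U)^{-1}(q_k)\big)$; both are continuous, the first trivially and the second because $(\F|_U)^{-1}$ is continuous by Lemma~\ref{posheaf}(a) and evaluation of Hatcher maps is continuous by Lemma~\ref{posheaf}(b), and both are lifts through $\F$ of the map $(q_1<\dots<q_m;h)\mapsto q_m=\F(h)$. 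Since $\F$ is a local homeomorphism, the set of points where two continuous maps into $E$ with the same composite with $\F$ agree is open (if they agree at a point, choose an open neighborhood of the common image on which $\F$ is injective and pull it back), so $A_k$ is open in $C_k$, hence in $\rho_m^*E$. That $N$ is an up-set is a formal computation with the cocycle identity: if $(q_1<\dots<q_m;h)\le(q'_1<\dots<q'_{m'};h')$ — that is, the first chain is a subchain of the second and $h'=\F^{q_m}_{q'_{m'}}(h)$ — and $k$ witnesses the smaller point, then the term $q_k$ still occurs in the larger chain and lies in $\F(U)$, and $h'=\F^{q_m}_{q'_{m'}}\F^{q_k}_{q_m}((\F|_U)^{-1}(q_k))=\F^{q_k}_{q'_{m'}}((\F|_U)^{-1}(q_k))$, so the larger point lies in $N$.

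Finally, $\F^\Delta$ records only the chain, so if $\F^\Delta(q_1<\dots<q_m;h)=\F^\Delta(q'_1<\dots<q'_{m'};h')$ then the two chains coincide; if moreover both points lie in $N$, with witnesses $k$ and $k'$, then $q_k$ and $q_{k'}$ both lie in $\F(U)$ and, being members of one chain, are comparable, say $q_k\le q_{k'}$. Since $\F|_U$ is an isomorphism of posets, $\F^{q_k}_{q_{k'}}\big((\F|_U)^{-1}(q_k)\big)=(\F|_U)^{-1}(q_{k'})$, whence $h=\F^{q_k}_{q_m}((\F|_U)^{-1}(q_k))=\F^{q_{k'}}_{q_m}((\F|_U)^{-1}(q_{k'}))=h'$, and $\F^\Delta|_N$ is injective. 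The two substantive points are the ``equalizer of two lifts is open'' step — the sheaf-theoretic face of unique gluing, where Lemma~\ref{posheaf}(b) and the local-homeomorphism property of $\F$ enter — and the last display, where the poset-isomorphism clause of Lemma~\ref{posheaf}(a) is precisely what makes the choice of witness $k$ immaterial. I expect the main care to be needed in the openness step, in pinning down $N\cap\rho_m^*E$ and verifying that the maps involved land in the pullback spaces of Lemma~\ref{posheaf}(b); the rest is bookkeeping.
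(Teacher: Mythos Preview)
Your proof is correct and uses the same essential ingredients as the paper's --- Lemma~\ref{posheaf}(a) for the locally poset-isomorphic section and Lemma~\ref{posheaf}(b) for continuity of the Hatcher-map evaluation --- but organizes the argument somewhat differently. The paper first proves two general ``upward-saturation preserves openness'' lemmas (for $P^\Delta$ and for $E^\Delta_\F$), the second via the observation that the maps $f_\pi\:\pi^*\rho_n^*E\to\rho_{n+k}^*E$ are morphisms of sheaves and hence open; it then takes $V\subset\rho_n^*E$ to be the preimage of the good neighborhood from Lemma~\ref{posheaf}(a) and shows $\F^\Delta$ restricts to a bijection $\cel V\cer\to\cel U\cer$. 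Your route is more direct: you construct the neighborhood $N$ in one stroke (allowing chains of \emph{any} length, not just $\ge n$), verify openness level-by-level via the ``equalizer of two continuous lifts through a local homeomorphism is open'' principle, and reduce to injectivity since continuity and openness of $\F^\Delta$ are already established in the text preceding the lemma. The paper's approach isolates reusable lemmas about $\cel\,\cdot\,\cer$; yours is more economical for this single result. The crucial step --- that the choice of witness index is immaterial because $\F|_U$ is a poset isomorphism --- is identical in both.
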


\begin{proof}
For a subset $S$ of a poset $Q$ let $\cel S\cer$ denote the smallest subset of $Q$ that contains $S$ and is open
in the Alexandroff topology; in other words, $\cel S\cer$ consists of all $q\in Q$ such that $q\ge p$ for some
$p\in Q$.  

If $U$ is an open subset of $P^{[n]}$, then $\cel U\cer$ is open in $P^\Delta$.
Indeed, for each $k\ge 0$, $\cel U\cer\cap P^{[n+k]}$ is the union of the preimages of $U$ under the $\binom{n+k}k$ 
projections $P^{[n+k]}\to P^{[n]}$, which are continuous, being the restrictions of the projections $P^{n+k}\to P^n$.

Next, if $V$ is an open subset of $\rho_n^*E$, then $\cel V\cer$ is open in $E^\Delta_\F$.
Indeed, it suffices to show that $\cel V\cer\cap\rho_{n+k}^*E$ is open in $\rho_{n+k}^*E$ for each $k\ge 0$.
Let $\pi$ be one of the $\binom{n+k}k$ projections $P^{[n+k]}\to P^{[n]}$.
Let $f_\pi\:\pi^*\rho_n^*E\to\rho_{n+k}^*E$ be given by $f_\pi(c,g)=\big(c,\F^{\rho_n\pi(c)}_{\rho_{n+k}(c)}(g)\big)$.
Then $\pi^*\rho_n^*\F=(\rho_{n+k}^*\F)\circ f$, and since $\F$ is a posheaf, it follows from
Lemma \ref{posheaf}(b) that $f_\pi$ is continuous.
Thus $f_\pi$ is a homomorphism of sheaves $\pi^*\rho_n^*\F\to\rho_{n+k}^*\F$.
It is easy to see that every homomorphism of sheaves is itself a sheaf of sets; and that every sheaf of sets 
is an open map.
In particular, $f_\pi$ is an open map for each projection $\pi$.
Clearly, $\cel V\cer\cap\rho_{n+k}^*E$ is the union of the sets $f_\pi(V_\pi)$, where $V_\pi$ is the preimage
of $V$ under the natural map $\pi^*\rho_n^*E\to\rho_n^*E$.
Since the latter is continuous, each $V_\pi$ is open.
Thus $\cel V\cer\cap\rho_{n+k}^*E$ is open.

Since $\F$ is a sheaf, there exists an open neighborhood $V_{(p,g)}$ of each $(p,g)\in E$ such that $\F$ restricts to
a homeomorphism between $V_{(p,g)}$ and an open neighborhood $U_p$ of $p$ in $P$.
Since $\F$ is a posheaf, by Lemma \ref{posheaf}(a) for every $(p',g')\in V_{(p,g)}$ and $(p'',g'')\in V_{(p,g)}$ 
such that $p'\le p''$, we have $\F^{p'}_{p''}(g')=g''$.

Given an $(p_1<\dots<p_n;\,g)\in\pi^*E$, let $U$ and $V$ be the preimages of $U_{p_n}$ and $V_{(p_n,g)}$ under
$\rho_n\:P^{[n]}\to P$ and the natural map $\rho_n^*E\to E$.
Since $\rho_n^*E$ is clopen in $\rho^*E$, $V$ is an open neighborhood of $(p_1<\dots<p_n;\,g)$ in $\pi^*E$
and clearly $\rho^*\F$ restricts to a homeomorphism between $V$ and $U$.
By construction, for every $(p_1'<\dots<p_n';\,g')\in V$ and $(p_1''<\dots<p_n'';\,g'')\in V$ such that
$p_n'\le p_n''$, we have $\F^{p_n'}_{p_n''}(g')=g''$.

Now $\cel U\cer$ consists of all chains $(q_1<\dots<q_m)$ that have a subchain $(p_1'<\dots<p_n')$ in $U$;
and $\cel V\cer$ consists of all tuples $(q_1<\dots<q_m;\,h)$ such that $(q_1<\dots<q_m)$ has a subchain 
$(p_1'<\dots<p_n')$ in $U$, and $h=\F^{p_n'}_{q_m}(g')$, where $g'\in\F_{p_n'}$ is the unique element 
such that $(p_1'<\dots<p_n',\,g')\in V$.
Thus $h$ is uniquely determined by $(p_1'<\dots<p_n')$.
Given another $(p_1''<\dots<p_n'',\,g'')\in V$ such that $(p_1''<\dots<p_n'')$ is a subchain of $(q_1<\dots<q_m)$,
we have either $p_n''\le p_n'$ or $p_n'\le p_n''$, since $(q_1<\dots<q_n)$ is a chain.
By symmetry it suffices to consider the case $p_n'\le p_n''$.
Then $\F^{p_n'}_{p_n''}(g')=g''$, and consequently $\F^{p_n'}_{q_m}(g')=\F^{p_n''}_{q_m}(g'')$.
Thus $h$ does not depend on the choice of the subchain $(p_1'<\dots<p_n')$ and so is uniquely determined by 
the chain $(q_1<\dots<q_m)$.
Hence $\F$ restricts to a bijection between $\cel V\cer$ and $\cel U\cer$. 

Since $\F$ is an open continuous map and $\cel V\cer$ is open, $\F$ restricts to an open continuous map
between $\cel V\cer$ and $\cel U\cer$.
Since this restriction is also a bijection, it must be a homeomorphism.
\end{proof}

We define $\Holim\F$ to be the pullback $\kappa^*\F^\Delta\:\kappa^*E^\Delta_\F\to |P|$.
In other words,
\[\Holim\F=\kappa^*[\rho^*\F^*]\:\kappa^*[\rho^*E]\to|P|.\]
Lemmas \ref{combinatorialization} and \ref{holim-sheaf} imply that $\Holim\F$ is a sheaf.

Given a posheaf $\F$ of abelian groups over a topological poset $P$, we define $\Lim^n\F$, also denoted
$\underset{p\in P}\Lim^n\F_p$, to be $H^n(|P|;\,\Holim\F)$ for each $n=0,1,\dots$.
As discussed in \S\ref{discrete}, when $P$ has discrete topology, these are the usual derived limits.

\subsection{Leray sheaves}

Let us call a diagram of spaces $\pi\:E\to P$ an {\it order-isometry} if there exist metrics 
on $E$ and $P$, compatible with the topologies, such that $d\big(\pi(x),\pi(x')\big)=d(x,x')$ whenever $x\le x'$.

\begin{lemma} \label{order-isometry}
Let $\pi\:E\to P$ be a diagram of spaces, with metrizable $E$ and $P$.
If $\pi$ is a closed map and an order-isometry, then $\Hocolim\pi\:|E|\to|P|$ is a closed map.
\end{lemma}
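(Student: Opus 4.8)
The plan is to verify closedness via the standard criterion: a continuous map $g\colon X\to Y$ of metrizable spaces is closed iff for every $b\in Y$, every open $\mathcal U\supseteq g^{-1}(b)$, and every sequence $x_n$ in $X$ with $g(x_n)\to b$, one has $x_n\in\mathcal U$ for all large $n$. So I assume $\Hocolim\pi$ is not closed and fix $b=\sum_{j=1}^m\mu_j q_j\in|P|$ in reduced form ($q_1<\dots<q_m$, each $\mu_j>0$), an open $\mathcal U\supseteq(\Hocolim\pi)^{-1}(b)$ in $|E|$, and points $a_n\in|E|\setminus\mathcal U$ with $b_n:=\Hocolim\pi(a_n)\to b$, aiming for a contradiction.

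Two preliminaries. First, replacing the metric $d_E$ that realizes the order-isometry by $\max(d_E,\,d_P\circ(\pi\times\pi))$ changes neither the topology of $E$ nor the order-isometry property --- on comparable pairs the maximum equals $d_E=d_P\circ(\pi\times\pi)$ --- but makes $\pi$ globally $1$-Lipschitz; hence $\Hocolim\pi$, which in the Hartman--Mycielski model carries the step function $\phi_a$ associated with $a$ to $\pi\circ\phi_a$, is $1$-Lipschitz for the $L_1$-metrics. Second, because $\pi$ is a diagram of spaces, a chain $y_1<\dots<y_\ell$ in $E$ is determined by its least element $y_1$ together with the chain $\pi(y_1)<\dots<\pi(y_\ell)$ in $P$ (uniqueness of lifts forces $y_{i+1}=\pi^{\pi(y_i)}_{\pi(y_{i+1})}(y_i)$), and the order-isometry gives $d_E(y_i,y_j)=d_P(\pi(y_i),\pi(y_j))$ for all $i<j$; so the Hatcher maps are isometric embeddings, $\Hocolim\pi$ restricts to an isometry of each closed simplex of $|E|$ onto a simplex of $|P|$, and $(\Hocolim\pi)^{-1}(b)=\{\sum_{j}\mu_j\,\pi^{q_1}_{q_j}(x_1)\mid x_1\in\pi^{-1}(q_1)\}$ is a homeomorphic copy of $\pi^{-1}(q_1)$.

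Next I would analyze $b_n\to b$ combinatorially, as in the proof of Lemma~\ref{estimates2}: after passing to a subsequence, the reduced chain of $b_n$ splits into $m$ blocks (the $j$-th consisting of vertices within $\eta_n\to 0$ of $q_j$, of total coefficient within $\eta_n$ of $\mu_j$) separated by junk of total coefficient $\le\eta_n$; this uses that each $\mu_j\ge\Gamma>0$. By the rigidity above, the chain of $a_n$ carries the matching block structure, with its $j$-th block inside $\pi^{-1}(B(q_j,\eta_n))$. Here the closedness of $\pi$ enters in its ``tube'' form: for every open $N\supseteq\pi^{-1}(q_j)$ there is $\delta>0$ with $\pi^{-1}(B(q_j,\delta))\subseteq N$. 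Choosing the tubes $N_j$ adapted to $\mathcal U$ --- around each $c_{x_1}=\sum_j\mu_j\,\pi^{q_1}_{q_j}(x_1)$ a basic $L_1$-neighborhood contained in $\mathcal U$ prescribes, for each slot $j$, an open neighborhood of $\pi^{q_1}_{q_j}(x_1)$ --- and using the continuity of the Hatcher maps and of $x_1\mapsto c_{x_1}$, the block structure forces $\phi_{a_n}$ to agree, off a set of measure $O(\eta_n)$, with $\phi_c$ for a suitable $c\in(\Hocolim\pi)^{-1}(b)$. Then $L_1(\phi_{a_n},\phi_c)\to 0$ with $c\in\mathcal U$ open, so $a_n\in\mathcal U$ for large $n$, a contradiction.

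The main obstacle is this last passage: it is a ``parametrized'' version of the proof of Lemma~\ref{estimates2} in which the fixed point of that lemma is replaced by the moving fiber $\pi^{-1}(q_1)$ and the admissible neighborhoods are dictated by the arbitrary open set $\mathcal U$ rather than by metric balls. The delicate points are (a) using the closedness of $\pi$ to confine the first block of $a_n$ to a prescribed tube over $\pi^{-1}(q_1)$ and thereby pin down the approximating point $c$; (b) bounding the contributions of junk and of block-boundary transitions to $L_1(\phi_{a_n},\phi_c)$ by $o(1)$; and (c) making the choices of the $N_j$ and of $c$ coherent as $n\to\infty$. The remaining ingredients --- the neighborhood criterion for closedness, the $1$-Lipschitz reduction, the rigidity of diagrams of spaces, and the isometry on simplices --- are routine.
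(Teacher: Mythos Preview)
Your criterion for closedness and the two preliminaries are fine (the remark that ``the Hatcher maps are isometric embeddings'' does not follow from the order-isometry, since points in the same fiber are incomparable, but you do not actually use it). The gap is in the last passage, and it is not merely ``delicate'': the tube formulation forces you to control an \emph{arbitrary} open neighborhood $\mathcal U$ of a fiber that is homeomorphic to the generally non-compact set $\pi^{-1}(q_1)$. Even if you succeed in producing, for each $n$, a point $c_n\in(\Hocolim\pi)^{-1}(b)$ with $L_1(\phi_{a_n},\phi_{c_n})\to 0$, this does \emph{not} give $a_n\in\mathcal U$: you would need $L_1(\phi_{a_n},\phi_{c_n})$ to be smaller than $d(c_n,\,|E|\setminus\mathcal U)$, and the latter has no positive lower bound along the non-compact fiber. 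Your point (a), ``pinning down $c$'' from a tube $N_1\supseteq\pi^{-1}(q_1)$, cannot help, because there is no retraction $N_1\to\pi^{-1}(q_1)$ available, and your point (c), ``making the choices coherent as $n\to\infty$'', is exactly the missing uniformity. In short, the tube criterion trades the difficulty for a uniformity problem that the hypotheses do not resolve.

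The paper avoids this entirely by working directly with closed sets and extracting a convergent subsequence of the $a_n$ themselves. Given a closed $F\subset|E|$ and $x_k\in F$ with $y_k:=\Hocolim\pi(x_k)\to y\notin(\Hocolim\pi)(F)$, one splits the step function $\phi_{y_k}$ into $n$ pieces of masses $\lambda_1,\dots,\lambda_n$ (matching the reduced form $y=\sum_i\lambda_i y_i$), so that the $i$th piece $y_{ki}\in|P|$ converges to the vertex $y_i$; one then picks a single vertex $y'_{ki}$ from that piece with $y'_{ki}\to y_i$ in $P$. The chain of $x_k$ carries the identical block structure, with representative vertices $x'_{ki}$ satisfying $\pi(x'_{ki})=y'_{ki}$. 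Now closedness of $\pi$ is used \emph{sequentially}, not in tube form: since $\{y'_{k,1}\}$ fails to be closed (its limit $y_1$ escapes), so does $\{x'_{k,1}\}$, and one extracts a convergent subsequence $x'_{k_l,1}\to x_1$; iterating over $i=2,\dots,n$ yields $x'_{\kappa_l,i}\to x_i$ for a common subsequence. Finally the order-isometry gives $d_E(x_{ki},x'_{ki})=d_P(y_{ki},y'_{ki})\to 0$ simplex-wise, so $x_{\kappa_l}\to x:=\sum_i\lambda_i x_i$ in $|E|$. Then $x\in F$ and $\Hocolim\pi(x)=y$, a contradiction. The point is that closedness of $\pi$ furnishes convergent subsequences of vertices in $E$, and the order-isometry upgrades this to convergence of the barycentric points in $|E|$; no uniformity over the fiber is needed.
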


\begin{proof} Suppose that $F\subset |E|$ is closed, but $(\Hocolim\pi)(F)$ is not.
Then there exists a sequence of points $y_1,y_2,\dots\in(\Hocolim\pi)(F)$ converging to a point 
$y\notin(\Hocolim\pi)(F)$.
We have $y=\sum_{i=1}^n\lambda_i y_i\in |P|$, where $y_1<\dots<y_n$, each $y_i\in P$, each $\lambda_i\ge 0$ and 
$\sum_{i=1}^n\lambda_i=1$.
Then for each $k$ we have $y_k=\sum_{i=1}^n\sum_{j=1}^{m_{ki}}\lambda_{kij} y_{kij}\in |P|$, where 
$y_{k,i,1}<\dots<y_{k,i,m_{ki}}\le y_{k,i+1,1}$ for each $k$ and $i$, each $y_{kij}\in P$, and 
$\sum_{j=1}^{m_{ki}}\lambda_{kij}=\lambda_i$.
Since $y_{k,i,1}<\dots<y_{k,i,m_{ki}}$, the formal sum $\frac1{\lambda_i}\sum_{j=1}^{m_{ki}}\lambda_{kij} y_{kij}$ 
denotes a point $y_{ki}\in |P|$.
Hence we can write formally $y_k=\sum_{i=1}^n\lambda_i y_{ki}$; we will further take this formal equation to encode 
the more useful observation that the step function $\phi_{y_k}$ is the ``stacked linear combination'' of 
the step functions $\phi_{y_{ki}}$ in the sense that 
$\phi_{y_k}(\lambda_1+\dots+\lambda_{i-1}+t\lambda_i)=\phi_{y_{ki}}(t)$ for $t\in [0,1)$.
Therefore $L_1(\phi_y,\phi_{y_k})=\sum_{i=1}^n\frac1{\lambda_i} L_1(\phi_{y_i},\phi_{y_{ki}})$, and consequently 
for each $i=1,\dots,n$ the sequence $y_{ki}$ converges to $y_i$.
Here each $y_i\in P$, but the $y_{ki}$ need not lie in $P$; however, each $y_{kij}\in P$, and for each $k$ and $i$
we may choose one of these points $y'_{ki}\bydef y_{k,i,j_{ki}}\in P$ so that the sequence $y'_{ki}$ also converges 
to $y_i$ for each $i$ (see \cite{M2}*{Lemma \ref{metr:open-in-HM}}).
Then, in particular, $d(y_{ki},y'_{ki})\to 0$ as $k\to\infty$.

Let us pick some points $x_k\in(\Hocolim\pi)^{-1}(y_k)\cap F$.
Then for each $k$ we have $x_k=\sum_{i=1}^n\sum_{j=1}^{m_{ki}}\lambda_{kij} y_{kij}\in |E|$, where 
$x_{k,i,1}<\dots<x_{k,i,m_{ki}}\le x_{k,i+1,1}$ for each $k$ and $i$ and each 
$x_{kij}\in(\Hocolim\pi)^{-1}(y_{kij})$.
(Of course, the $x_{ijk}$ do not necessarily lie in $F$.)
Again the formal sum $\frac1{\lambda_i}\sum_{j=1}^{m_{ki}}\lambda_{kij} x_{kij}$ denotes a point $x_{ki}\in |E|$,
and $x_k=\sum_{i=1}^n\lambda_i x_{ki}$ (which encodes a relation between the step functions).
Let $x'_{ki}=x_{k,i,j_{ki}}\in E$; thus $y'_{ki}=\pi(x'_{ki})$.
Since $\pi$ is a closed map and $\{y'_{k,1}\mid k\in\N\}$ is not closed in $P$,
$\{x'_{k,1}\mid k\in\N\}$ is not closed in $E$.
Hence some subsequence $x'_{k_l,1}$ converges to a point $x_1\in E$.
Since $\pi$ is a closed map and $\{y'_{k_l,1}\mid l\in\N\}$ is not closed in $P$,
$\{x'_{k_l,2}\mid l\in\N\}$ is not closed in $E$.
Hence some subsequence $x'_{k_{l_m},2}$ converges to a point $x_2\in E$.
By arguing in the same fashion, we will construct a sequence of numbers $\kappa_1,\kappa_2,\dots\in\N$ such that
for each $i=1,\dots,n$ the sequence $x'_{\kappa_l,i}$ converges to a point $x_i\in E$.
Since both $x_{ki}$ and $x'_{ki}$ belong to the simplex of $|E|$ spanned by the chain 
$x_{k,i,1}\le\dots\le x_{k,i,m_{ki}}$ and $\pi$ is an order-isometry, $d(x_{ki},x'_{ki})=d(y_{ki},y'_{ki})$.  
Hence $d(x_{ki},x'_{ki})\to 0$ as $k\to\infty$.
Therefore the sequence $x_{\kappa_l i}$ also converges to $x_i$.
Since each $x_k=\sum_{i=1}^n\lambda_i x_{ki}$, the sequence $x_{\kappa_l}$ converges to 
$x\bydef \sum_{i=1}^n\lambda_i x_i$.
Since each $x_k\in F$ and $F$ is closed, $x\in F$.
On the other hand, since $\Hocolim\pi$ is continuous, $(\Hocolim\pi)(x)=y$.
Thus $y\in(\Hocolim\pi)(F)$, which is a contradiction.
\end{proof}

\begin{theorem} \label{Leray}
Let $X$ be a metrizable space and let $E(X)$ be the subspace of $X\x K(X)$ consisting of all pairs 
$(x,K)$ such that $x\in K$.
Let $\pi$ be the composition of the inclusion $E(X)\subset X\x K(X)$ and the projection $X\x K(X)\to K(X)$.

(a) $\Hocolim\pi\:|E(X)|\to|K(X)|$ is a closed map.

(b) $\h^n(\Hocolim\pi)\simeq\Holim\h^n(\pi)$ (isomorphism of sheaves).
\end{theorem}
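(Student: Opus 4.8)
I would deduce (a) from Lemma \ref{order-isometry} and prove (b) by matching stalks and then exhibiting a natural comparison morphism of sheaves. For (a): the map $\pi\:E(X)\to K(X)$ is a diagram of spaces, being the projection $E\big(K(X)\big)\to K(X)$ of Example \ref{atoms-diagram} for $P=K(X)$ (recall $A(K(X))=X$) --- the only candidate for an order-lift of $(x,K)$ along an inclusion $K\i K'$ is $(x,K')$, and it lies in $E(X)$ since $x\in K\i K'$. By Theorem \ref{hyperspace-posheaf}(a), $\pi$ is closed. Metrizing $E(X)\i X\x K(X)$ by the maximum of the metric on $X$ and the Hausdorff metric on $K(X)$, we find that whenever $(x,K)\le(x',K')$ --- which forces $x=x'$ --- we have $d\big((x,K),(x',K')\big)=d(K,K')=d\big(\pi(x,K),\pi(x',K')\big)$, so $\pi$ is an order-isometry. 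As $E(X)$ and $K(X)$ are metrizable, Lemma \ref{order-isometry} yields (a).

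For (b), note first that $\h^n(\Hocolim\pi)$ is a sheaf over $|K(X)|$, and so is $\Holim\h^n(\pi)$ by Lemmas \ref{combinatorialization} and \ref{holim-sheaf}, using that $K(X)$ is a metrizable (hence Hausdorff) pospace and that $\h^n(\pi)$ is a posheaf over it by Theorem \ref{hyperspace-posheaf}(b) (recall $K(X)$ carries the reverse-inclusion order). Since $|E(X)|$ --- a subspace of $X\x|K(X)|$ by \S\ref{resolution} --- is metrizable and $\Hocolim\pi$ is closed by (a), Lemma \ref{closed Leray} gives $\h^n(\Hocolim\pi)_x\simeq H^n\big((\Hocolim\pi)^{-1}(x)\big)$ for each $x\in|K(X)|$. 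If $x$ lies in the interior of the simplex spanned by a chain $K_1\subsetneq\dots\subsetneq K_n$, then $(\Hocolim\pi)^{-1}(x)$ contains exactly one point of each simplex of $|E(X)|$ spanned by a chain $(x_0,K_1)<\dots<(x_0,K_n)$ with $x_0\in K_1$, and nothing else; hence $(\Hocolim\pi)^{-1}(x)\cong K_1$ and $\h^n(\Hocolim\pi)_x\simeq H^n(K_1)$. On the other hand $\rho$ carries this chain, regarded in the reverse-inclusion poset, to its greatest element $K_1$, so the stalk of $\Holim\h^n(\pi)$ over $x$ is $\h^n(\pi)_{K_1}\simeq H^n(K_1)$, again by Theorem \ref{hyperspace-posheaf}(a). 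Thus the two sheaves have canonically isomorphic stalks.

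To upgrade this to an isomorphism of sheaves I would construct a morphism $\Theta\:\h^n(\Hocolim\pi)\to\Holim\h^n(\pi)$. By the universal property of the Leray sheaf it suffices to give, naturally in an open $U\i|K(X)|$, a homomorphism $H^n\big((\Hocolim\pi)^{-1}(U)\big)\to\Gamma\big(U;\,\Holim\h^n(\pi)\big)$; the natural choice sends a class $\alpha$ to the assignment $x\mapsto\alpha|_{(\Hocolim\pi)^{-1}(x)}\in H^n(K_1)=\h^n(\pi)_{K_1}$, where $K_1$ is the smallest vertex of the minimal simplex containing $x$. The point to verify is that this is a continuous section of $\Holim\h^n(\pi)=\kappa^*[\rho^*\h^n(\pi)]$: on the interior of a fixed simplex it is immediate, and when a coface introduces a smaller vertex $K_0\i K_1$ one uses that $(\Hocolim\pi)^{-1}$ of a short segment from $x$ into that coface is a product $K_0\x I$ whose ends are the fiber over the coface point and the copy of $K_0$ sitting inside $(\Hocolim\pi)^{-1}(x)$; restricting $\alpha$ then shows that the section value over the coface is obtained from the value over the open carrier simplex by the Hatcher map $H^n(K_1)\to H^n(K_0)$ of $\h^n(\pi)$ --- precisely the compatibility encoded in the Alexandrov-refined topology of $[\rho^*\h^n(\pi)]$, continuity in the base being supplied by Lemma \ref{posheaf}(b). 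Granting $\Theta$, the stalk identifications above --- induced by these same restrictions --- show $\Theta$ is a stalkwise isomorphism, hence an isomorphism of sheaves.

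The main obstacle I anticipate is exactly this last verification: that fiberwise restriction of cohomology classes produces continuous sections of $\Holim\h^n(\pi)$ and induces stalk isomorphisms. It combines the iterated mapping cylinder picture of $(\Hocolim\pi)^{-1}\big(\ost(\sigma)\big)$, Spanier's tautness theorem \cite{Sp} (used, as in Lemma \ref{closed Leray}, to pass between a stalk and its fiber), the compatibility of Hatcher maps with cohomology restriction, and the Alexandrov-refined topology underlying $\Holim$; once these are reconciled the rest is formal.
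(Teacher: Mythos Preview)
Your proof of (a) is correct and coincides with the paper's: both verify that $\pi$ is an order-isometry for the $l_\infty$ product metric on $E(X)\subset X\x K(X)$, invoke Theorem \ref{hyperspace-posheaf}(a) for closedness, and apply Lemma \ref{order-isometry}.

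For (b), your stalk computation is correct and matches the paper's. The organizational difference lies in how the sheaf isomorphism is established. You propose a morphism $\Theta$ sending a class $\alpha\in H^n\big((\Hocolim\pi)^{-1}(U)\big)$ to its fiberwise restrictions, and flag the continuity of the resulting section as the main obstacle. The paper does not construct a morphism in this sense; instead it identifies the \'etale spaces as sets (via the stalk identification) and then shows the two topologies coincide by exhibiting, for each point $A$ and each stalk element $\alpha$, matching local sections of \emph{both} sheaves built from a single source. The device is a tautness lift: extend $\alpha\in H^n(A_1)$ to $\gamma\in H^n(V)$ for an open $V\supset A_1$ in $X$, set $U=\kappa^{-1}[\bar\rho^{-1}(K(V))]$, and then (i) the section $C\mapsto\gamma|_{C_1}$ of $\Holim\h^n(\pi)$ over $U$ arises from the section $B\mapsto\gamma|_B$ of $\h^n(\pi)$ over $K(V)$ constructed in Theorem \ref{hyperspace-posheaf}(b), while (ii) the section of $\h^n(\Hocolim\pi)$ over $U$ with the same values arises from $p^*\gamma$, where $p\:(\Hocolim\pi)^{-1}(U)\to V$ is the first-coordinate projection. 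Since both sections are given by the same formula $C\mapsto\gamma|_{C_1}$, the topologies agree.

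This sidesteps exactly the difficulty you anticipate: rather than proving that restriction of an \emph{arbitrary} $\alpha$ yields a continuous section of $\Holim\h^n(\pi)$, the paper works only with the specific classes $p^*\gamma$, whose fiberwise restrictions are manifestly controlled by a section of $\h^n(\pi)$. Your approach would go through once you observe that it suffices to check continuity of $s_\alpha$ locally and that, near any given point, $\alpha$ may be replaced (up to shrinking $U$) by such a $p^*\gamma$ with the same germ --- but that replacement is precisely the paper's construction, so the two arguments converge at the crucial step.
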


Here $K(X)$ again must be ordered by {\it reverse} inclusion in (b) in order for $\h^n(\pi)$ to be a posheaf.
In fact, it will be convenient to work with the usual order by inclusion on $K(X)$; in this notation,
the monotone map $\rho\:\big(K(X)^*\big)'\to K(X)^*$, $(A_n<\dots<A_1)\mapsto A_1$ (where $<$ means $\supsetneqq$), 
corresponds to the antitone map $\bar\rho\:K(X)'\to K(X)$ given by $(A_1<\dots<A_n)\mapsto A_1$ 
(where $<$ means $\subsetneqq$).

\begin{proof}[Proof. (a)] 
Let us choose some metrics on $X$ and $K(X)$ and consider, for example, the $l_\infty$ product metric
on $E(X)\subset X\x K(X)$.
Then the projection $\pi\:E(X)\to K(X)$ is clearly an order-isometry.
Now the assertion follows from Lemma \ref{order-isometry}.
\end{proof}

\begin{proof}[(b)] Let $\h=\h^n(\Hocolim\pi)$ and $\h'=\Holim\h^n(\pi)$.
Given an $A\in |K(X)|$, we have $A=\sum_{i=1}^n\lambda_i A_i$, where $A_1\subset\dots\subset A_n$ are 
pairwise distinct compact subsets of $X$, each $\lambda_i>0$ and $\sum_{i=1}^n\lambda_i=1$.
Over the simplex $(A_1<\dots<A_n)$ of $K(X)$, $\Hocolim\pi$ is the projection of the iterated mapping
cylinder $\cyl(A_1\subset\dots\subset A_n)$ onto that simplex; in particular,
$(\Hocolim\pi)^{-1}(A)\cong\pi^{-1}(A_1)\cong A_1$.
Hence by (a) and Lemma \ref{closed Leray}, $\h_A\simeq H^n(A_1)$.
On the other hand, using the notation of \S\ref{holim-definition} with $P=K(X)^*$, we have 
$\kappa(A)=(A_n<\dots<A_1)$ and $\rho(A_n<\dots<A_1)=\bar\rho(A_1<\dots<A_n)=A_1$.
Since $\pi^{-1}(A_1)\cong A_1$, we have $\h'_A\simeq H^n(A_1)$.
Thus $\h$ and $\h'$ have isomorphic stalks.
So their \'etale spaces can be identified as sets, also with identical group structures in each stalk, and 
it remains to verify that they have the same topology.
For that it suffices to show that for every $A\in|K(X)|$ and every $\alpha\in\h_A=\h'_A$, there exists an open 
neighborhood $U$ of $A$ in $|K(X)|$ and sections $s$, $s'$ of $\h$, $\h'$ over $U$ such that $s(A)=s'(A)=\alpha$
and $s(C)=s'(C)\in\h_C=\h'_C$ for each $C\in U$.

Let $A=\sum_{i=1}^n\lambda_i A_i$ be as above.
By Spanier's tautness theorem (see \cite{Sp}*{Theorem 6.6.3}), $H^n(A_1)\simeq\lim_V H^n(V)$ over all open 
neighborhoods $V$ of $A_1$ in $X$.
Hence there exists an open neighborhood $V$ of $A_1$ and a $\gamma\in H^n(V)$ such that $\gamma|_{A_1}=\alpha$.
Since the Hausdorff metric induces the Vietoris topology on $K(X)$, the subset $W\bydef \{B\in K(X)\mid B\subset V\}$ 
of $K(X)$ is open in $K(X)$.
Actually, $W$ can be identified with $K(V)$, and we have $A\in W$.
Since $\bar\rho$ is continuous, $\bar\rho^{-1}(W)=\{(C_1<\dots<C_n)\in K(X)'\mid C_1\subset V\}$ is open in
$K(X)'$.
Since a chain $C_1\subset\dots\subset C_n$ of $K(X)$ satisfies $C_i\subset V$ for some $i$ if and only if
it satisfies $C_1\subset V$, in fact $\bar\rho^{-1}(W)$ is also Alexandrov open in $K(X)'$.
Hence $[\bar\rho^{-1}(W)]$ is an open subset of $[K(X)]$.
The open subset $U\bydef \kappa^{-1}[\bar\rho^{-1}(W)]$ of $|K(X)|$ consists of all $C=\sum_{i=1}^n\lambda_iC_i$ 
such that $(C_1<\dots<C_n)\in K(X)'$, $C_1\subset V$, each $\lambda_i>0$ and 
$\sum_{i=1}^n\lambda_i=1$.

By the proof of Theorem \ref{hyperspace-posheaf}(b), there is a section $s_\F$ of the Leray sheaf $\F\bydef \h^n(\pi)$ 
over $W$ given by $B\mapsto\gamma|_B\in\F_B$.
Then $s_\F\bar\rho$ is a section of $\bar\rho^*\F$ over $\bar\rho^{-1}(W)$ given by
$(C_1<\dots<C_n)\mapsto\gamma|_{C_1}\in\F_{C_1}=(\bar\rho^*\F)_{(C_1<\dots<C_n)}$.
In the alternative language, $s_\F\rho$ is a section of $\rho^*\F^*$ over $\rho^{-1}(W^*)$ given by
$(C_n<\dots<C_1)\mapsto\gamma|_{C_1}\in(\F^*)_{C_1}=(\rho^*\F^*)_{(C_n<\dots<C_1)}$.
Since $s$ and $\rho$ are monotone and continuous (see the proof of Theorem \ref{hyperspace-posheaf}(b)
concerning the monotonicity of $s$), $[s\rho]$ is continuous and hence is a section of $[\rho^*\F^*]$
over $[\rho^{-1}(W^*)]$.
But we have $[s\rho]=[s\bar\rho]$, $[\rho^*\F^*]=[\bar\rho^*\F]$ and $[\rho^{-1}(W^*)]=[\bar\rho^{-1}(W)]$.
Therefore $s'\bydef [s\bar\rho]\kappa$ is a section of $\h'=\kappa^*[\bar\rho^*\F]$ over 
$U=\kappa^{-1}[\bar\rho^{-1}(W)]$.
Clearly, $s'$ is given by $C\mapsto\gamma|_{C_1}\in\F_C$, where $C$ is as above.
In particular, $s'(A)=\gamma|_{A_1}=\alpha$, as desired.

The open subset $(\Hocolim\pi)^{-1}(U)$ of $|E(X)|\subset X\x |K(X)|$ consists of all pairs $(x,C)$
where $C=\sum_{i=1}^n\lambda_iC_i\in U$ and $x\in C_1$.
Hence $(\Hocolim\pi)^{-1}(U)$ lies in $V\x |K(X)|$.
Let $p$ be the composition $(\Hocolim\pi)^{-1}(U)\subset V\x |K(X)|\to V$ of the inclusion and the projection, 
and let $\sigma=p^*\gamma\in H^n\big((\Hocolim\pi)^{-1}(U)\big)$.
For each $C=\sum_{i=1}^n\lambda_iC_i\in U$, the composition $C_1=|\pi|^{-1}(C)\subset (\Hocolim\pi)^{-1}(U)\xr{p} V$ 
coincides with the inclusion map $C_1\to V$.
Hence $\sigma|_{(\Hocolim\pi)^{-1}(C)}=\gamma|_{C_1}$.
On the other hand, by Spanier's tautness theorem $\sigma|_{(\Hocolim\pi)^{-1}(C)}$ coincides with the image of 
$\sigma\in H^n\big((\Hocolim\pi)^{-1}(U)\big)$ in $\h_C=\colim_O H^n\big((\Hocolim\pi)^{-1}(O)\big)$, where $O$ 
runs over all open neighborhoods of $C$ in $|K(X)|$.
Since $\h$ is the sheafafication of $O\mapsto H^n\big((\Hocolim\pi)^{-1}(O)\big)$ and 
$\sigma\in H^n\big((\Hocolim\pi)^{-1}(U)\big)$, there is a section $s$ of $\h$ over $U$ given by 
$C\mapsto\gamma|_{C_1}\in\F_C$.
In particular, $s(A)=\gamma|_{A_1}=\alpha$, as desired.
\end{proof}

\subsection{Spectral sequences}

\begin{theorem} \label{lerayss}
Let $X$ be a metrizable space and let $K(X)$ be the pospace of its nonempty compact subsets,
topologized by the Hausdorff metric and ordered by reverse inclusion.
Then there is a spectral sequence of the form
\[\underset{K_\alpha\in K(X)}{\Lim^p}H^q(K_\alpha)\Rightarrow H^{p+q}(X).\]
\end{theorem}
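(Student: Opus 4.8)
The plan is to obtain the asserted spectral sequence as the Leray spectral sequence of a single continuous map, after identifying its $E_2$ term with the corrected derived limits and its abutment with $H^*(X)$. Let $\pi\:E(X)\to K(X)$ be the projection considered in Theorems \ref{hyperspace-posheaf} and \ref{Leray}, where $E(X)\subset X\x K(X)$ consists of the pairs $(x,K)$ with $x\in K$, and set $f:=\Hocolim\pi\:|E(X)|\to|K(X)|$. Both $|E(X)|$ and $|K(X)|$ are metrizable, hence paracompact, so the Leray spectral sequence of $f$ is available in its usual sheaf-theoretic form
\[E_2^{pq}=H^p\big(|K(X)|;\,\h^q(f)\big)\ \Rightarrow\ H^{p+q}\big(|E(X)|\big),\]
and since $E_2^{pq}=0$ for $p<0$ or $q<0$, it is a first-quadrant spectral sequence, hence convergent.

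First I would compute the $E_2$ term. By Theorem \ref{Leray}(b) there is an isomorphism of sheaves $\h^q(f)\simeq\Holim\h^q(\pi)$ over $|K(X)|$; consequently $E_2^{pq}=H^p\big(|K(X)|;\,\Holim\h^q(\pi)\big)$, which by the definition of the corrected derived limits (\S\ref{holim-definition}) is exactly $\Lim^p\h^q(\pi)$. By Theorem \ref{hyperspace-posheaf}, $\h^q(\pi)$ is a posheaf over the pospace $K(X)$ ordered by reverse inclusion, with stalk $\h^q(\pi)_{K_\alpha}\simeq H^q(K_\alpha)$ at each $K_\alpha$ and with Hatcher maps the restriction homomorphisms. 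Hence $\Lim^p\h^q(\pi)=\underset{K_\alpha\in K(X)}{\Lim^p}H^q(K_\alpha)$, and therefore $E_2^{pq}=\underset{K_\alpha\in K(X)}{\Lim^p}H^q(K_\alpha)$.

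Next I would identify the abutment. Since the atoms of $K(X)$ are the singletons, $A\big(K(X)\big)=X$, and the space $E(X)$ coincides with the space $E\big(K(X)\big)$ of \S\ref{resolution} (the pairs $(x,K)$ with $\{x\}\subset K$). By Theorem \ref{atoms} there is thus a homotopy equivalence $|E(X)|=\big|E\big(K(X)\big)\big|\simeq A\big(K(X)\big)=X$. As \v Cech cohomology is a homotopy invariant, $H^{p+q}\big(|E(X)|\big)\simeq H^{p+q}(X)$. Substituting this and the computation of $E_2$ into the Leray spectral sequence of $f$ yields the asserted spectral sequence $\underset{K_\alpha\in K(X)}{\Lim^p}H^q(K_\alpha)\Rightarrow H^{p+q}(X)$.

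I do not expect a single hard computational step; the work is in assembling the machinery coherently. The chief things to be careful about are: that the Leray spectral sequence is applied in a generality that genuinely covers this situation (this uses the paracompactness of $|E(X)|$ and $|K(X)|$ and, for the identification of the Leray sheaf, the closedness of $f$ from Theorem \ref{Leray}(a), which is already built into Theorem \ref{Leray}(b)); and that the cohomology occurring in the Leray spectral sequence is the same \v Cech cohomology appearing in the statement, both for $X$ and for the compacta $K_\alpha$, so that in particular the homotopy equivalence $|E(X)|\simeq X$ does induce an isomorphism of the relevant cohomology groups.
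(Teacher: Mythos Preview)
Your proposal is correct and follows essentially the same approach as the paper's proof: set up the map $\Hocolim\pi\:|E(X)|\to|K(X)|$, invoke its Leray spectral sequence, identify the $E_2$ term via Theorem \ref{Leray}(b) and the definition of $\Lim^p$, and identify the abutment via Theorem \ref{atoms}. Your added remarks on paracompactness, convergence, and the explicit identification $E(X)=E\big(K(X)\big)$ are reasonable elaborations; just note that when you invoke Theorem \ref{atoms} you are tacitly using the ordinary inclusion order on $K(X)$ (so that singletons are atoms), which is harmless since $|K(X)|$ and $|E(X)|$ are unchanged under order reversal.
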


In more detail, the theorem asserts that for each $q\ge 0$ there is a posheaf $\F^q$ over $K(X)$ such that
the stalk $\F^q_{K_\alpha}\simeq H^q(K_\alpha)$ for each $K_\alpha\in K(X)$, and there is a spectral sequence
of the form $\Lim^p\F^q=E_2^{pq}\ \Rightarrow\ H^{p+q}(X)$.

\begin{proof} Let $E(X)$ be the subspace of $X\x K(X)$ consisting of all pairs $(x,K)$ such that $x\in K$.
Let $\pi$ be the composition of the inclusion $E(X)\subset X\x K(X)$ and the projection $X\x K(X)\to K(X)$.

Then we get a continuous map $\Hocolim\pi\:|E(X)|\to|K(X)|$ (see \S\ref{posheaves}).
The Leray spectral sequence of this map runs 
$H^p\big(|K(X)|;\,\h^q(\Hocolim\pi)\big)\Rightarrow H^{p+q}\big(|E(X)|\big)$ \cite{Br}.

By Theorem \ref{atoms}, $|E(X)|$ is homotopy equivalent to $X$.

By Theorem \ref{Leray}(b), the Leray sheaf $\h^q(\Hocolim\pi)\simeq\Holim\h^q(\pi)$.
By definition, $\Lim^p\h^q(\pi)=H^p\big(|K(X)|;\,\Holim\h^q(\pi)\big)$.

By Theorem \ref{hyperspace-posheaf}(a), the stalk $\h^q(\pi)_{K_\alpha}\simeq H^n(K_\alpha)$ for each compact
$K_\alpha\subset X$.
\end{proof}

Every finite-dimensional separable metrizable space embeds in some sphere $S^n$.
For subsets of $S^n$ it is easy to rewrite the previous spectral sequence in terms of homology:

\begin{theorem} \label{lerayss2}
Let $X$ be a subset of $S^n$ and let $U(X)$ be the pospace of its open neighborhoods
$\ne S^n$, ordered by inclusion and topologized by the Hausdorff metric.
Then there is a spectral sequence of the form
\[\underset{U_\alpha\in U(X)}{\Lim^p}H_q(U_\alpha)\Rightarrow H_{q-p}(X).\]
\end{theorem}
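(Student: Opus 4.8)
The plan is to obtain Theorem~\ref{lerayss2} as a reindexing, via Sitnikov duality, of Theorem~\ref{lerayss} applied to the complement $Y:=S^n\but X$. The first step is to identify the indexing pospace. The complementation map $c\:U(X)\to K(Y)$, $U\mapsto S^n\but U$, is a bijection: a closed subset of $S^n$ is compact, $S^n\but U$ is contained in $Y$ exactly when $U\supseteq X$, and $S^n\but U\ne\emptyset$ exactly when $U\ne S^n$. Since $U\subseteq U'$ iff $S^n\but U\supseteq S^n\but U'$, the map $c$ carries the inclusion order on $U(X)$ to the reverse-inclusion order on $K(Y)$ used in Theorem~\ref{lerayss}; and since $U(X)$ is by definition topologized through $c$ and the Hausdorff metric on $K(Y)$, the map $c$ is a homeomorphism (indeed an isometry for suitably chosen metrics). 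In particular $U(X)$ is a pospace, and $c$ is an isomorphism of metrizable pospaces; it therefore induces a homeomorphism $|c|\:|U(X)|\to|K(Y)|$ and an equivalence between posheaves over $U(X)$ and over $K(Y)$ compatible with the $\Holim$ construction, hence with $\Lim^p$.

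The second step applies Theorem~\ref{lerayss} to $Y$, which is separable metrizable as a subspace of $S^n$: for each $r\ge 0$ it supplies a posheaf $\G^r$ over $K(Y)$ with stalk $\G^r_{K_\alpha}\simeq H^r(K_\alpha)=\check H^r(K_\alpha)$, together with a spectral sequence $\Lim^p\G^r\Rightarrow H^{p+r}(Y)=\check H^{p+r}(S^n\but X)$. I would then simply \emph{define} $\F_q:=c^*\G^{\,n-1-q}$, a posheaf over $U(X)$; by the transport of the first step, $\Lim^p\F_q=\Lim^p\G^{\,n-1-q}$ is the $E_2$-term, in the grading $E_2^{p,\,n-1-q}$, of a spectral sequence abutting to $\check H^{\,p+n-1-q}(S^n\but X)$. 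Note that no naturality of duality along the Hatcher maps is required at this stage, precisely because $\F_q$ is taken to be a transport of the Leray posheaf itself rather than a separately-constructed ``homology posheaf''.

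The third step rewrites the $E_2$-term and the abutment by Sitnikov duality (equivalently, for the stalks, classical Alexander duality). On a stalk, $K_\alpha=S^n\but U_\alpha$ is a nonempty compactum, proper in $S^n$ because $\emptyset\notin U(X)$ when $X\ne\emptyset$, so $\check H^{\,n-1-q}(K_\alpha)\cong\tilde H_q(S^n\but K_\alpha)=\tilde H_q(U_\alpha)$; since $U_\alpha$ is open in $S^n$, hence an ANR, its Steenrod--Sitnikov and singular homology agree, so $(\F_q)_{U_\alpha}\simeq H_q(U_\alpha)$ (reduced), as required. On the abutment, $\check H^{\,p+n-1-q}(S^n\but X)\cong\tilde H_{\,n-1-(p+n-1-q)}(X)=\tilde H_{q-p}(X)$. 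Hence the spectral sequence reads $\Lim^p\F_q\Rightarrow\tilde H_{q-p}(X)$, and relabelling $E_2^{p,\,n-1-q}$ as $E^2_{-p,q}$, with the evident conversion of the cohomological differential shift into the homological one, brings it to the asserted form $\Lim^p\F_q=E^2_{-p,q}\Rightarrow H_{q-p}(X)$. I expect the genuine obstacle to be bookkeeping rather than new ideas: keeping the two gradings straight, confirming the order/topology identification $U(X)\cong K(Y)$, and --- the most delicate point --- reconciling reduced with unreduced groups, since Sitnikov duality is cleanest for reduced \v Cech cohomology and reduced Steenrod--Sitnikov homology. Strictly, one first establishes the reduced form of Theorem~\ref{lerayss} (verbatim the same argument) and hence of Theorem~\ref{lerayss2}; passing to unreduced groups affects nothing outside total degree $0$, where it contributes the expected free summand, and this is exactly where excluding $S^n$ from $U(X)$ --- equivalently $\emptyset$ from the $K_\alpha$ --- is used.
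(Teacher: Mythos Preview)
Your proposal is correct and follows essentially the same route as the paper: identify $U(X)$ with $K(S^n\setminus X)$ via complementation, apply Theorem~\ref{lerayss} to $S^n\setminus X$, and reindex both the $E_2$-term and the abutment using Sitnikov duality. The paper's proof is simply a terse version of what you wrote, omitting the bookkeeping about orders, gradings, and reduced versus unreduced groups that you spell out.
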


In more detail, the theorem asserts that for each $q\ge 0$ there is a posheaf $\F_q$ over $K(X)$ such that
the stalk $(\F_q)_{P_\beta}\simeq H_q(P_\beta)$ for each $P_\beta\in U(X)$, and there is a second quadrant
homology spectral sequence of the form $\Lim^p\F_q=E^2_{-p,q}\ \Rightarrow\ H_{q-p}(X)$ (where $p,q\ge 0$).

\begin{proof}
Clearly, $U(X)$ is homeomorphic to $K(S^n\but X)$.
By the Sitnikov duality (see \cite{M00}*{Theorem \ref{book:alex duality}}) we have 
$H_i(X)\simeq H^{n-i-1}(S^n\but X)$ and $H_i(U)\simeq H^{n-i-1}(S^n\but U)$ 
for every open neighborhood of $X$.
So the assertion follows from Theorem \ref{lerayss}.
\end{proof}

\section{Discussion}

The results of the present paper suggest that it may be worthwhile to develop an entire theory of derived limits 
over posets.
It must be admitted that the present state of this theory is absolutely unsatisfactory, if not to say 
downright pathetic.

The following obvious problems still wait to be addressed:
\begin{enumerate}
\item Compute $\Lim^p$ for a few basic examples.

\item How does $\Lim^p$ behave with respect to cofinal subsets?

\item When does $\Lim^0$ coincide with the usual (discretely indexed) inverse limit $\lim$?

It is not hard to show that they do coincide in our model setting (that is, for the Leray sheaves 
$\h^q(\pi)$ of the map $\pi\:E(X)\to K(X)$) as long as $X$ is locally compact.
Which is hardly surprising, but even this is not entirely obvious in the absence of answers to 
the previous question.

On the other hand, there seems to be no reason to expect that $\Lim^0=\lim$ in full generality.
Or if they do always coincide, that would actually be pretty bad!
If $G_\alpha$ is the Marde\v si\'c--Prasolov inverse system (whose $\lim^1$ cannot be computed in ZFC)
and $S_\alpha=\Hom\big(\Hom(G_\alpha,S^1),S^1\big)$, we get a short exact sequence of inverse systems
$0\to G_\alpha\to S_\alpha\to Q_\alpha\to 0$, where $\lim Q_\alpha$ cannot be computed in ZFC.

\item When does $\Lim^p=\lim^p$?

\item Obtain vanishing results for $\Lim^p$.

\item What can be said of $\Lim^p$ over zero-dimensional pospaces?

\item Understand $\Lim^p$ as derived functors. 
Deduce a long exact sequence and a formula using an explicit resolution.

\item Compute $\Lim^p$ for the telescopic chain complexes of \cite{M-II}.

\item Apply $\Lim^p$ to prove something interesting.
In particular, can one establish a version of \cite{M-III}*{Theorem \ref{lim:lim2theorem}}
not involving any assumptions independent from ZFC?
\end{enumerate}

\subsection*{Acknowledgements}

I would like to thank A. Dranishnikov, M. Jibladze and D.~ Saveliev for stimulating feedback.

\subsection*{Disclaimer}

I oppose all wars, including those wars that are initiated by governments at the time when 
they directly or indirectly support my research. The latter type of wars include all wars 
waged by the Russian state in the last 25 years (in Chechnya, Georgia, Syria and Ukraine) 
as well as the USA-led invasions of Afghanistan and Iraq.

\end{document}